\newcommand{\nc}{\newcommand}
\nc{\dmo}{\DeclareMathOperator}
\nc{\nt}{\newtheorem}
\nc{\ds}{\displaystyle}
\nc{\ens}{\ensuremath}
\theoremstyle{plain}
\dmo{\Mod}{Mod}
\dmo{\PMod}{PMod}
\dmo{\Homeo}{Homeo}
\dmo{\Sym}{Sym}
\dmo{\Aut}{Aut}
\dmo{\Out}{Out}
\dmo{\Sp}{Sp}
\dmo{\SL}{SL}
\dmo{\GL}{GL}
\dmo{\cd}{cd}
\dmo{\vcd}{vcd}
\dmo{\colim}{colim}
\dmo{\Stab}{Stab}
\dmo{\support}{support}
\dmo{\hocolim}{hocolim}
\dmo{\glue}{glue}
\nc{\Z}{\mathbb{Z}}
\nc{\z}{\mathcal{Z}}
\nc{\C}{\mathcal{C}}
\nc{\V}{\mathcal{V}}
\nc{\R}{\mathbb{R}}
\nc{\Q}{\mathbb{Q}}
\nc{\M}{\mathcal{M}}
\nc{\N}{\mathcal{N}}
\nc{\I}{\mathcal{I}}
\nc{\J}{\mathcal{J}}
\nc{\K}{\mathcal{K}}
\nc{\hyp}{\mathbb{H}}
\nc{\T}{\mathcal{T}}
\nc{\Y}{\mathcal{Y}}
\nc{\A}{\mathcal{A}}
\nc{\F}{\mathcal{F}}
\renewcommand{\S}{\mathcal{S}}
\nc{\B}{\mathcal{B}}
\renewcommand{\P}{\mathcal{P}}
\nc{\Dmax}{D_{\mathrm{max}}}
\nc{\Dmin}{D_{\mathrm{min}}}
\nc{\p}[1]{\medskip\paragraph{{\bf #1}}}
\nc{\margin}[1]{\marginpar{\scriptsize #1}}
\nc{\bpf}{\begin{proof}}
\nc{\epf}{\end{proof}}
\nc{\bl}[1]{\begin{list}{{\bf #1}}{
\setlength{\leftmargin}{.5in}
\setlength{\rightmargin}{.5in}
\setlength{\itemsep}{0ex plus 0ex minus 0ex}
}
}
\nc{\el}{\end{list}}
\begin{document}


\input{epsf.sty}


\title{The dimension of the Torelli group}

\author{Mladen Bestvina}
\author{Kai-Uwe Bux}
\author{Dan Margalit}

\address{Mladen Bestvina: Department of Mathematics\\ University of Utah\\ 155 S 1400 East \\ Salt Lake City, UT 84112-0090}
\email{bestvina@math.utah.edu}

\address{Kai-Uwe Bux: Department of Mathematics\\ University of Virginia\\ Kerchof
  Hall 229\\ Charlottesville, VA 22903-4137}
\email{kb2ue@virginia.edu}

\address{Dan Margalit: Department of Mathematics\\ University of Utah\\ 155 S 1400 East \\ Salt Lake City, UT 84112-0090}
\email{margalit@math.utah.edu}

\thanks{The first and third authors gratefully acknowledge support by
  the National Science Foundation.}

\keywords{mapping class group, Torelli group, Johnson kernel,
  cohomological dimension, complex of cycles}

\subjclass[2000]{Primary: 20F34; Secondary: 57M07}

\maketitle

\begin{center}\today\end{center}

\begin{abstract}
We prove that the cohomological dimension of the Torelli group for a
closed connected orientable surface of genus $g \geq 2$ is equal to
$3g-5$.  This answers a question of Mess, who proved the lower
bound and settled the case of $g=2$.  We also find the
cohomological dimension of the Johnson kernel (the subgroup of the
Torelli group generated by Dehn twists about separating curves) to be
$2g-3$.  For $g \geq 2$, we prove that the top dimensional
homology of the Torelli group is infinitely generated.  Finally, we
give a new proof of the theorem of Mess that gives a precise
description of the Torelli group in genus 2.   The main tool
is a new contractible complex, called the ``complex of cycles'', on which the Torelli group acts.
\end{abstract}

\section{Introduction}

Let $S=S_g$ be a closed surface of genus $g$ (unless specified
otherwise, we take all surfaces to be connected and orientable).  Let
$\Mod(S)$ be the \emph{mapping class group} of $S$, defined as
$\pi_0(\Homeo^+(S))$, where $\Homeo^+(S)$ is the group of orientation
preserving homeomorphisms of $S$.  The \emph{Torelli group} $\I(S)$ is
the kernel of the natural action of $\Mod(S)$ on $H_1(S,\Z)$.  This
action is symplectic---it preserves the algebraic intersection
number---and it is a classical fact that $\Mod(S)$ surjects onto
$\Sp(2g,\Z)$.  All of this information is encoded in the following
short exact sequence:
\[ 1 \to \I(S) \to \Mod(S) \to \Sp(2g,\Z) \to 1. \]

\p{Cohomological dimension.} For a group $G$, we denote by $\cd(G)$
its \emph{cohomological dimension}, which is the supremum over all $n$
so that there exists a $G$-module $M$ with $H^n(G,M) \neq 0$.

\begin{main}
\label{main:torelli}
For $g \geq 2$, we have $\cd(\I(S_g)) = 3g-5$.
\end{main}

Since $\Mod(S_0)=1$, we have $\I(S_0)=1$.  Also, it is a classical
fact that $\Mod(S_1) \cong \Sp(2,\Z) = \SL(2,\Z)$, and so $\I(S_1)$ is
also trivial.

The lower bound for Theorem~\ref{main:torelli} was already given by Mess in
1990 \cite{mess}.

\begin{theorem}[Mess]
\label{thm:lower}
For $g \geq 2$, we have $\cd(\I(S_g)) \geq
3g-5$.
\end{theorem}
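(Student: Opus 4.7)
My strategy is to exhibit, for each $g \geq 2$, a subgroup of $\I(S_g)$ with cohomological dimension at least $3g-5$. Because finite-order mapping classes act non-trivially on $H_1(S_g;\Z)$ for $g \geq 1$, the group $\I(S_g)$ is torsion-free, so $\cd$ is monotone under passing to subgroups. I will induct on $g$, bootstrapping via a Birman-type extension whose kernel is an orientable Poincar\'e duality group of dimension $3$.

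\textbf{Base case and inductive setup.} For $g=2$, the bound $3g-5=1$ follows from the existence of a non-abelian free subgroup in $\I(S_2)$; in fact Mess's earlier result that $\I(S_2)$ is an infinitely generated free group gives $\cd(\I(S_2))=1$ directly. For $g \geq 3$, assume $\cd(\I(S_{g-1})) \geq 3g-8$. Choose a compact subsurface $\Sigma \subset S_g$ homeomorphic to $S_{g-1,1}$ bounded by a separating simple closed curve. The inclusion induces an injection of mapping class groups rel boundary, which restricts to an injection $\I(\Sigma) \hookrightarrow \I(S_g)$, where $\I(\Sigma)$ denotes the kernel of the action of $\Mod(\Sigma,\partial\Sigma)$ on $H_1(\Sigma;\Z)$. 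Capping $\partial\Sigma$ with a disk yields $S_{g-1}$, and the Birman exact sequence restricts to Torelli to give
$$1 \longrightarrow \pi_1(UT\,S_{g-1}) \longrightarrow \I(\Sigma) \longrightarrow \I(S_{g-1}) \longrightarrow 1,$$
where $UT\,S_{g-1}$ is the unit tangent bundle, a closed orientable aspherical $3$-manifold. Its fundamental group is thus an orientable Poincar\'e duality group of dimension $3$.

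\textbf{Spectral sequence argument.} I claim $\cd(\I(\Sigma)) \geq \cd(\I(S_{g-1})) + 3$, which closes the induction. To prove it, let $n = \cd(\I(S_{g-1}))$ and choose a $\Z[\I(S_{g-1})]$-module $M$ with $H^n(\I(S_{g-1});M) \neq 0$; pull $M$ back to an $\I(\Sigma)$-module on which $\pi_1(UT\,S_{g-1})$ acts trivially. In the Hochschild--Serre spectral sequence
$$E_2^{p,q} = H^p\bigl(\I(S_{g-1});\, H^q(\pi_1(UT\,S_{g-1}); M)\bigr) \Longrightarrow H^{p+q}(\I(\Sigma); M),$$
Poincar\'e duality of the fiber identifies $H^3(\pi_1(UT\,S_{g-1});M) \cong M$ as $\I(S_{g-1})$-modules, so $E_2^{n,3} \cong H^n(\I(S_{g-1});M) \neq 0$. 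The class at $(n,3)$ survives: outgoing differentials land in columns $p>n$, forcing their target to vanish by the very definition of $\cd(\I(S_{g-1}))$; incoming differentials would originate in rows $q>3$, which vanish since $\cd(\pi_1(UT\,S_{g-1}))=3$. Hence $H^{n+3}(\I(\Sigma);M) \neq 0$ and so $\cd(\I(S_g)) \geq \cd(\I(\Sigma)) \geq n+3 \geq 3g-5$.

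\textbf{Main obstacle.} The delicate point is verifying that the Poincar\'e duality identification $H^3(\pi_1(UT\,S_{g-1});M) \cong M$ holds as $\I(S_{g-1})$-modules and not merely as abelian groups. This is handled by orientability of $UT\,S_{g-1}$ (the orientation character is trivial) together with the fact that the conjugation action of $\I(\Sigma)$ on the kernel preserves the ambient $3$-manifold structure and hence the duality isomorphism. A secondary check is that the Birman exact sequence restricts to Torelli as written: this follows from the equality $H_1(\Sigma;\Z) = H_1(S_{g-1};\Z)$ (since the separating boundary is null-homologous) and the standard fact that point-pushing maps act trivially on $H_1$.
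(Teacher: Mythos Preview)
Your argument is correct and shares the same geometric backbone as the paper's: induction on $g$ via the Birman exact sequence for a one-holed genus-$(g-1)$ subsurface $\Sigma \subset S_g$, with kernel $\pi_1(UT\,S_{g-1})$, a $\mathrm{PD}^3$ group. The paper (following Mess) packages the inductive step differently: rather than running Hochschild--Serre on the full Torelli groups, it inductively constructs explicit Poincar\'e duality subgroups $\Gamma_g \subset \I(S_g)$, starting from $\Gamma_2 \cong \Z$ and taking $\Gamma_{g+1}$ to be the preimage of $\Gamma_g$ in the relative Birman sequence, then invoking the black-box theorem that an extension of a PD group by a PD group is again PD with additive cohomological dimension. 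Your spectral-sequence corner argument (survival of $E_2^{n,3}$ because columns past $n$ and rows past $3$ vanish) is essentially what that PD-extension theorem unpacks to in this instance, so the two proofs are very close in spirit. Your route is a bit more self-contained and avoids having to maintain the PD property through the induction; the paper's route buys the stronger conclusion that $\I(S_g)$ actually contains a closed aspherical manifold group of dimension exactly $3g-5$, not merely some subgroup witnessing $\cd \geq 3g-5$. Your identification of the ``main obstacle''---that the duality isomorphism $H^3(K;M)\cong M$ must be $\I(S_{g-1})$-equivariant, which comes down to the conjugation action fixing the fundamental class of $UT\,S_{g-1}$---is exactly the right point to flag, and your resolution (orientation-preserving diffeomorphisms of $S_{g-1}$ induce orientation-preserving maps of the unit tangent bundle) is correct.
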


This theorem is proven by constructing a subgroup of $\I(S_g)$ which
is a Poincar\'e duality subgroup of dimension $3g-5$.  We recall
Mess's proof in Section~\ref{section:mess}.  Mess explicitly asked if
the statement of Theorem~\ref{main:torelli} is true
\cite{mess}.

We also study the \emph{Johnson kernel}, denoted $\K(S_g)$, which is the subgroup of $\I(S_g)$ generated by Dehn twists about
separating curves.  Powell showed that $\K(S_2)$ is equal to
$\I(S_2)$ \cite{jp}.  On the other hand, Johnson proved that the index
of $\K(S_g)$ in $\I(S_g)$ is infinite when $g \geq 3$
\cite{djabelian}, answering a question of Birman.

The following theorem answers a question of Farb \cite[Problem 5.9]{farb}.

\begin{main}
\label{main:kg}
For $g \geq 2$, we have $\cd(\K(S_g)) = 2g-3$.
\end{main}

$\K(S_g)$ has a free abelian subgroup of rank $2g-3$ when $g \geq 2$
(see Section~\ref{section:mess}).  From the inclusions $\Z^{2g-3} <
\K(S_g) < \I(S_g)$, and Theorem~\ref{main:torelli}, it
immediately follows that $2g-3 \leq \cd(\K(S_g)) \leq 3g-5$ (see
Fact~\ref{fact:cd subgp} below).

We can think of Theorems~\ref{main:torelli} and~\ref{main:kg} as
giving the smallest possible dimensions of Eilenberg--MacLane spaces
for $\I(S)$ and $\K(S)$---the so-called \emph{geometric dimension}.
Indeed, it is a theorem of Eilenberg--Ganea, Stallings, and Swan that
if $\cd(G) \neq 2$ for some group $G$, then $\cd(G)$ is the same as
the geometric dimension \cite{eg,jrs,rgs}.

For a group with torsion, such as $\Mod(S)$, the cohomological
dimension is infinite.  However, if a group $G$ has a torsion free
subgroup $H$ of finite index, then we can define $\vcd(G)$, the \emph{virtual
cohomological dimension} of $G$, to be $\cd(H)$.  It is a theorem of Serre
that $\vcd(G)$ does not depend on the choice of $H$ \cite[Th\'eor\`eme 1]{jps}.  In 1986, Harer 
proved that $\vcd(\Mod(S_g)) = 4g-5$ \cite{jlh}, so we see that there
is a gap of $g$ between $\cd(\I(S_g))$ and $\vcd(\Mod(S_g))$.

Let $\I^k(S_g)$ be the subgroup of $\Mod(S_g)$ consisting of elements
that act trivially on a fixed $2k$-dimensional symplectic subspace of
$H_1(S,\Z)$.

\begin{conj}
\label{conj:ik}
For $g \geq 2$ and $0 \leq k \leq g$, $\vcd(\I^k(S_g)) = 4g-5-k$.
\end{conj}

We discuss this conjecture further in Section~\ref{section:mess}.

The groups $\Mod(S_g)$, $\I(S_g)$, and $\K(S_g)$ are the first three groups in a
series of groups associated to $S_g$.  Let $\pi=\pi_1(S_g)$, and let
$\pi^k$ be the $k^{\mbox{\tiny th}}$ term of the lower central series
of $\pi$, that is, $\pi^0=\pi$ and $\pi^k=[\pi,\pi^{k-1}]$.  The
\emph{Johnson filtration} of $\Mod(S)$ is the sequence of groups
$\{\N_k(S_g)\}$ given by
\[ \N_k(S_g) = \ker \{ \Mod(S) \to \Out(\pi/\pi^k) \}. \]
It follows from definitions that $\N_0(S_g)=\Mod(S_g)$ and
$\N_1(S_g)=\I(S_g)$, and it is a deep theorem of Johnson that
$\N_2(S_g)=\K(S_g)$ \cite{dj2}.

For $g \geq 2$ and any $k$, Farb gives the lower bound $g-1$ for $\cd(\N_k(S_g))$ by constructing
a free abelian subgroup of $\N_k(S_g)$ of rank $g-1$ \cite{farb}.  From
this, and Theorem~\ref{main:kg}, it follows that, for $g \geq
2$ and any $k \geq 2$, we have $g-1 \leq \cd(\N_k(S_g)) \leq 2g-3$
(again apply Fact~\ref{fact:cd subgp}).  The following question was already asked by Farb \cite[Problem 5.9]{farb}.

\begin{q}
What is $\cd(\N_k(S_g))$ for $g \geq 3$ and $k \geq 3$?
\end{q}

\p{Infinite generation of top homology.} Besides the question of
cohomological dimension, one would also like to know in which
dimensions the homology is (in)finitely generated.  For instance, if
the second homology of a group is infinitely generated, then that
group is not finitely presented.  As of this writing, it is a central
open problem to determine whether or not $\I(S_g)$ is finitely
presented or not.  We have the following theorem.

\begin{main}
\label{main:top}
For $g \geq 2$, the group $H_{3g-5}(\I(S_g),\Z)$ is infinitely
generated.
\end{main}

In Kirby's problem list, Problem 2.9(B) (attributed to Mess) is to
find the largest $k=k_g$ so that $\I(S_g)$ has an Eilenberg--MacLane
space with finite $k$-skeleton \cite{rk}.  An immediate consequence of
Theorem~\ref{main:top} is that $k_g\leq3g-6$.  It follows from a
theorem of Mess (see below) that $k_3 \leq 2$.

Theorem~\ref{main:top} also solves a problem of Farb \cite[Problem 5.14]{farb}.

We do not know if the top dimensional homology of $\K(S_g)$ is finitely generated
for $g \geq 3$.

\p{Genus 2.} It is a theorem of Stallings and Swan that a group of
cohomological dimension 1 is free \cite{jrs,rgs}.  Thus,
Theorem~\ref{main:torelli} implies that $\I(S_2)=\K(S_2)$ is a free
group.  A celebrated theorem of Mess, Theorem~\ref{main:mess} below,
gives a much more precise picture of this group.

For the statement, note that each separating simple closed curve in
$S_2$ gives rise to an algebraic splitting of $H_1(S_2,\Z)$: the two
subspaces are the ones spanned by the curves on the two different
sides of the separating curve.  Each component of the splitting is a
2-dimensional symplectic subspace of $H_1(S_2,\Z)$, and so we call
such a splitting a \emph{symplectic splitting} of $H_1(S_2,\Z)$.  Note
that two separating curves give rise to the same symplectic splitting
if they differ by an element of $\I(S_2)$.

\begin{main}[Mess]
\label{main:mess}
$\I(S_2)$ is an infinitely generated free group, with one Dehn twist
generator for each symplectic splitting of $H_1(S_2,\Z)$.
\end{main}

We give a new proof of this theorem in Section~\ref{section:genus 2}.
While Mess's original proof is rooted in algebraic geometry, our proof
is confined entirely to the realm of geometric group theory.

\p{Strategy.} All of our theorems cited above are proven by studying
the actions of $\I(S_g)$ and $\K(S_g)$ on a new complex $\B(S_g)$,
called the ``complex of cycles'', which we construct in
Section~\ref{section:complex}.  Vertices of the complex correspond to
integral representatives in $S_g$ of some fixed element of
$H_1(S_g,\Z)$.  The key feature is the following fact, proven in
Sections~\ref{section:borrowing complex} and~\ref{section:chambers}.

\begin{main}
\label{main:b}
The complex $\B(S_g)$ is contractible for $g \geq 2$.
\end{main}

The basic idea is to use a nerve-cover argument to show that $\B(S_g)$
is homotopy equivalent to Teichm\"uller space.

In light of Theorem~\ref{thm:lower} (and the discussion after
Theorem~\ref{main:kg}), Theorems~\ref{main:torelli} and~\ref{main:kg}
reduce to the following:
\[ \cd(\I(S_g)) \leq 3g-5 \quad \textrm{ and } \quad \cd(\K(S_g)) \leq
2g-3. \]
For a group $G$ acting on a contractible cell complex $X$, Quillen proved that
\[ \cd(G) \leq \sup \{ \cd(\Stab_{G}(\sigma)) + \dim(\sigma) \}\]
where $\sigma$ ranges over the cells of $X$ \cite[Proposition
11]{jps}.  We take $X$ to be $\B(S_g)$, and we take $G$ to be either
$\I(S_g)$ or $\K(S_g)$.  We then show in Section~\ref{section:stabilizers}
that the right hand side is bounded above by $3g-5$ for $\I(S_g)$ and
by $2g-3$ for $\K(S_g)$.

In genus 2, the picture is very explicit---we give a picture of
$\B(S_2)/\I(S_2)$ in Figure~\ref{g2fareypic}.  Thus, we are able to
deduce Theorem~\ref{main:mess} by a careful analysis of this quotient.
For a free group, infinite generation of the group is the same as
infinite generation of its first homology.  As such,
Theorem~\ref{main:mess} gives the genus 2 case of
Theorem~\ref{main:top}.

For the proof of Theorem~\ref{main:top} we again appeal to the action
of $\I(S_g)$ on $\B(S_g)$.  It is a general fact that if a group $G$
acts on a contractible cell complex $X$ and satisfies the condition
\[ \sup \{ \cd(\Stab_{G}(\sigma)) + \dim(\sigma) \} = d \]
as above, then the $d^{\mbox{\tiny th}}$ integral homology of the
stabilizer in $G$ of a vertex of $X$ injects into $H_d(G,\Z)$ (see
Fact~\ref{fact:cartan} in Section~\ref{section:spectral}).  In
Section~\ref{section:top}, we use
an inductive argument---with base case Theorem~\ref{main:mess}---to
prove that the $(3g-5)^{\mbox{\tiny th}}$ homology of the stabilizer
in $\I(S_g)$ of a particular kind of vertex of $\B(S_g)$ is infinitely
generated.

\p{History.} The classical fact that $\I(S_1)$ is trivial was proven by
Dehn in the 1920's \cite{md}.  In 1983, Johnson proved that $\I(S_g)$
is finitely generated for $g \geq 3$ \cite{dj1}, and in 1986,
McCullough--Miller proved that $\I(S_2)$ is not finitely generated
\cite{mcm}.  Mess improved on this in 1992 by proving
Theorem~\ref{main:mess} \cite{gm}.  At the same time, Mess
showed that $H_3(\I(S_3),\Z)$ is not finitely generated (Mess credits
the argument to Johnson--Millson) \cite{gm}.  In 2001, Akita proved
that $H_\star(\I(S_g),\Z)$ is not finitely generated for $g \geq 7$
\cite{ta}.

Mess proved Theorem~\ref{thm:lower}, the lower bound for
$\cd(\I(S_g))$, in 1990 \cite{mess}.  The authors recently proved that
$\cd(\I(S_g)) < 4g-5 = \vcd(\Mod(S_g))$ \cite{bbm} (Harer's result
already implied that $\cd(\I(S_g)) \leq 4g-5$).

In the case of the Johnson kernel, less is known.  As
$\K(S_2)=\I(S_2)$, we have the theorem of Mess in this case.
Biss--Farb proved in 2006 that $\K(S_g)$ is not finitely generated for
$g \geq 3$ \cite{bf}.  An open question of Morita asks
whether or not $H_1(\K(S_g))$ is finitely generated for $g \geq 3$.

In an earlier paper, the authors studied the Torelli subgroup of
$\Out(F_n)$, that is, the group $\T_n$ of outer automorphisms of a
free group of rank $n$ that act trivially on the first homology of the
free group \cite{bbm}.  The main results of that paper---that $\cd(\T_n)=2n-4$ and that $H_{2n-4}(\T_n,\Z)$ is infinitely generated---are
analogous to Theorems~\ref{main:torelli} and~\ref{main:top} of the
present paper.  While proofs for corresponding theorems about
$\GL(n,\Z)$, $\Out(F_n)$, and $\Mod(S)$ often run parallel, at least on
a philosophical level, we encounter here a situation where this
paradigm does not hold.  For instance, the construction of the complex
of cycles in this paper is predicated on the fact that there can
be more than one shortest representative for a first homology class in
a hyperbolic surface; in a metric graph there is a exactly one
shortest representative for a first homology class.  Also, the proof
of the upper bound for $\cd(\T_n)$ does not (in any obvious way) give
the correct upper bound for $\cd(\I(S))$.

\p{Acknowledgements.} We would like to thank Ken Bromberg for many
discussions on Teichm\"uller theory.  We are also grateful to Tara
Brendle, Benson Farb, Chris Leininger, Justin Malestein, and Andy
Putman for helpful comments and conversations.


\section{The complex of cycles}
\label{section:complex}

After introducing some definitions and terminology, we will define the
complex of cycles for $S=S_g$.

\p{Cycles and curves.}   A 1-cycle in $S$ is a finite
formal sum
\[ \sum k_ic_i \]
where $k_i \in \R$, and each $c_i$ is an oriented simple closed curve in $S$;
the set $\{c_i: k_i \neq 0\}$ is called the \emph{support}.  We say
that the 1-cycle is \emph{simple} if the curves of the support are
pairwise disjoint, and we say that it is \emph{positive} if each $k_i$
is positive.  If we want to emphasize that the $k_i$ are in $\R$,
$\Q$, or $\Z$, we can call a cycle \emph{real}, \emph{rational}, or
\emph{integral}.

A \emph{basic cycle} is a simple positive 1-cycle $\sum k_i c_i$ with
the property that its support forms a linearly independent subset of
$H_1(S,\R)$.  If $x$ is an element of $H_1(S,\Z)$, a basic cycle for $x$ (that
is, a basic cycle representing $x$) must be integral.

Let $\S$ be the set of isotopy classes of oriented curves in $S$.  By
taking isotopy classes, there is a natural map from the set of
1-cycles in $S$ to $\R^\S$.  Formally, a point of $\R^\S$ is a
function from $\S$ to $\R$, and the image of the 1-cycle $\sum k_ic_i$
in $\R^\S$ is the function given by
\[
s \mapsto \displaystyle\sum_{c_i \in s} k_i.
\]

A \emph{multicurve} in the surface $S$ is a nonempty collection of
disjoint simple closed curves in $S$ that are isotopically nontrivial and
isotopically distinct.

\p{The construction.} The first step is to fix an arbitrary nontrivial
element $x$ of $H_1(S,\Z)$.  Next, let $\M$ be the set of isotopy
classes of oriented multicurves $M$ in $S$ with the property that each
oriented curve of $M$ appears in the support of some basic cycle for
$x$ supported in $M$.  The set $\M$ has a natural partial ordering
under inclusion (the orientation is important here).  As such, we can
think of $\M$ as a category where the morphisms are inclusions.

Given $M \in \M$, let $P_M$ be the polytope in $\R^\S$ given by the
convex hull of the images of the basic cycles for $x$ supported (with
orientation) in $M$.  The collection $\{P_M\}$ is a category under
inclusion, which we denote $\F$.  The functor given by
\[ M \mapsto P_M \]
is an isomorphism from $\M$ to $\F$.  We define the \emph{complex of cycles} $\B(S)$ as
the geometric realization of the colimit of this functor:
\[ \B(S) = \left |\mathop{\colim}_{M \in \M}\ \{ P_M\} \right |\]
(the choice of $x$ is suppressed in the notation).  We can thus regard
$\F$ as the category of cells of $\B(S)$.

Informally, $\B(S)$ is obtained from the disjoint union of the
elements of $\F$ by identifying faces that are equal in $\R^\S$.

Since $\I(S)$ acts trivially on $H_1(S,\Z)$, it acts on $\B(S)$.  In
the next two sections, we show that $\B(S)$ is homotopy equivalent to
Teichm\"uller space, and so it is contractible.

\p{Examples of cells.}  Consider the picture on the left hand side of
Figure~\ref{cellspic}.  Say that the homology classes of the curves
$a$, $b$, and $c$, are $x$, $y$, and $x-y$, respectively.  There are
exactly two basic cycles for $x$ supported on this multicurve, namely $a$
and $b+c$.  Thus, the associated cell of $\B(S_2)$ is an edge.

The three curves in the picture in the middle of Figure~\ref{cellspic}
are all homologous.  Thus, we may assume that they are all in the
homology class $x$.  It follows that the basic cycles are simply $a$,
$b$, and $c$, and the corresponding cell of $\B(S_4)$ is a 2-simplex.

Finally, we can think of the situation in the right hand side of the
figure as two independent copies of the first example.  Therefore, we
get the product of an edge with itself, or, a square.

\begin{figure}[htb]
\psfrag{a}{$a$}
\psfrag{b}{$b$}
\psfrag{b+c}{$b+c$}
\psfrag{d}{$d$}
\psfrag{e}{$e$}
\psfrag{f}{$f$}
\psfrag{c'}{$c'$}
\psfrag{c}{$c$}
\psfrag{a+b+d+e}{$a+b+d+e$}
\psfrag{c+d+e}{$c+d+e$}
\psfrag{c+f}{$c+f$}
\psfrag{a+b+f}{$a+b+f$}
\centerline{\includegraphics[scale=.5]{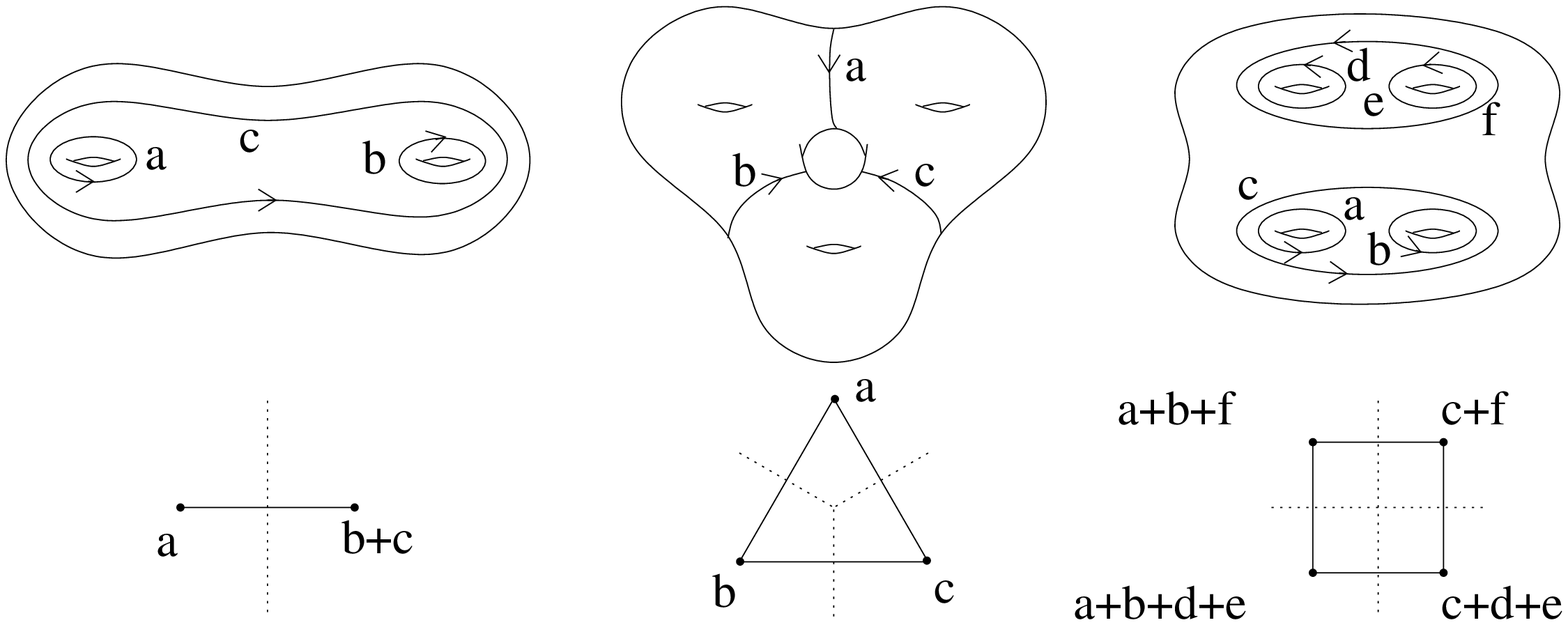}}
\caption{Multicurves with their associated cells.  The
  dotted lines indicate the (local) subdivision of $\T(S)$ into
  chambers (see Section~\ref{section:contractible}).}
\label{cellspic}
\end{figure}

While the examples of Figure~\ref{cellspic} are, in a sense, canonical
examples, the cells of $\B(S)$ come in a
wide variety of shapes.  Let $a$, $b$, $c$, $d$, $e$, and $f$ be the
oriented curves in $S_6$ shown in the left hand side of 
Figure~\ref{pentagonpic}, and consider, for instance, the homology
class $x=[d]+2[e]+[f]$.  There are relations $[a]+[b]+[c]=[e]$ and
$[a]+[b]+[d]=[f]$.  One can check that the resulting cell is
a pentagon, as shown in the right hand side of
Figure~\ref{pentagonpic}.

\begin{figure}[htb]
\psfrag{a}{$a$}
\psfrag{b}{$b$}
\psfrag{c}{$c$}
\psfrag{d}{$d$}
\psfrag{e}{$e$}
\psfrag{f}{$f$}
\psfrag{(0,0)}{$3a+3b+2c+2d$}
\psfrag{(2,0)}{$a+b+2d+2e$}
\psfrag{(2,1)}{$d+2e+f$}
\psfrag{(1,2)}{$c+e+2f$}
\psfrag{(0,2)}{$a+b+2c+2f$}
\centerline{\includegraphics[scale=.5]{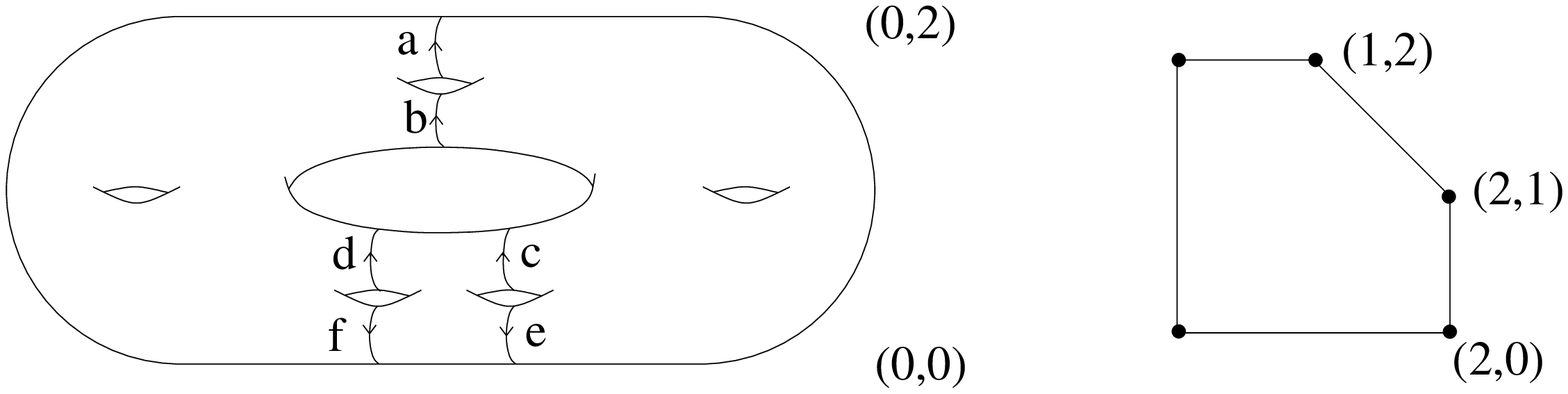}}
\caption{Curves that give rise to a pentagonal cell of
  $\B(S_6)$.  The picture is drawn in the ``$ef$-plane''.}
\label{pentagonpic}
\end{figure}

\p{Dimension.} We now characterize the dimension of a cell of $\B(S)$, and compute the dimension of the complex.

Given $M \in \M$, let $D=D(M)$ be the dimension of the span of $M$ in
$H_1(S,\R)$, let $N$ be the number of components of $S-M$, and let
$B=B(M)$ be the dimension of the associated cell of $\B(S)$.

\begin{lemma}
\label{lemma:cell dimension}
We have $B=|M|-D=N-1$.
\end{lemma}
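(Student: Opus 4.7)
The plan is to prove the two equalities $B = |M|-D$ and $|M|-D = N-1$ by separate arguments: the first is essentially linear algebra realizing $P_M$ as the ``positive part'' of an affine subspace, and the second is an application of the long exact sequence of the pair $(S,M)$.

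For $B = |M|-D$, label $M = \{c_1,\ldots,c_m\}$ and consider the linear map $L : \R^m \to H_1(S,\R)$ sending $e_i \mapsto [c_i]$. Since $x$ lies in the image of $L$ by the definition of $\M$, the fibre $L^{-1}(x)$ is an affine subspace of dimension $m-D$. Identify $\R^m$ with a subspace of $\R^\S$ via $(k_1,\ldots,k_m) \mapsto \sum k_i c_i$ and set $Q_M = L^{-1}(x) \cap \R^m_{\geq 0}$. A point of $Q_M$ is a vertex precisely when the set of indices where its coordinate is nonzero is linearly independent in $H_1(S,\R)$, which is exactly the definition of a basic cycle for $x$ supported in $M$. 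Granting that $Q_M$ is bounded (addressed below), this gives $Q_M = P_M$. Since every $c_i$ lies in the support of some basic cycle, averaging the corresponding vertices produces a point of $Q_M$ with all coordinates positive, so $Q_M$ has full dimension $m - D$ in the affine subspace $L^{-1}(x)$. Hence $B = m - D$.

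For $|M| - D = N - 1$, I would read off ranks from the long exact sequence of the pair $(S, M)$:
\[
H_2(S;\Z) \xrightarrow{\alpha} H_2(S,M;\Z) \xrightarrow{\partial} H_1(M;\Z) \xrightarrow{\iota_*} H_1(S;\Z).
\]
By excision, $H_2(S,M;\Z) \cong \bigoplus_{i=1}^N H_2(\overline{R_i},\partial \overline{R_i};\Z) \cong \Z^N$, where $R_1, \ldots, R_N$ are the components of $S - M$. The map $\alpha$ sends the fundamental class of $S$ to the sum of the fundamental classes of the $\overline{R_i}$ and is therefore injective. Since $H_1(M;\Z) \cong \Z^m$ with $\iota_*(c_i) = [c_i]$, the image of $\iota_*$ has rank $D$ and $\ker \iota_*$ has rank $m - D$. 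Exactness then yields
\[
m - D \;=\; \mathrm{rank}(\ker \iota_*) \;=\; \mathrm{rank}(\mathrm{im}\,\partial) \;=\; \mathrm{rank}\,H_2(S,M) - \mathrm{rank}\,\mathrm{im}\,\alpha \;=\; N - 1.
\]

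The main technical step, which I expect to be the principal obstacle, is the boundedness of $Q_M$ --- equivalently, that no nonzero $v \in \ker L$ lies in $\R^m_{\geq 0}$. The easy case is when $\mathrm{supp}(v)$ lies in the support $T$ of some basic cycle $p$: then $p$ and $p - \epsilon v$ are two distinct nonnegative cycles on the linearly independent set $T$ both representing $x$, a contradiction. The general case I would attack by passing to an extreme ray of $\ker L \cap \R^m_{\geq 0}$, so that $\mathrm{supp}(v)$ is minimally linearly dependent, and then combining the defining property of $\M$ (every $c_j$ appears in some basic cycle's support) with the positive relation among the $[c_j]$'s to force $x$ into a lower-dimensional positive cone incompatible with the existence of basic cycles for every $c_j \in \mathrm{supp}(v)$.
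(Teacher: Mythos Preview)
Your approach is in the same spirit as the paper's but far more careful. The paper's proof is two sentences: it asserts without further argument that $B=\dim\ker L=|M|-D$, and says the second equality ``follows from basic algebraic topology.'' Your long-exact-sequence computation for $|M|-D=N-1$ is exactly the right way to unpack the latter and is correct.

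For $B=|M|-D$ you have correctly isolated the real content: one needs $Q_M=L^{-1}(x)\cap\R^m_{\ge 0}$ to be bounded so that $P_M=Q_M$. Without this the inference fails---in an unbounded pointed polyhedron the convex hull of the vertex set can have strictly smaller dimension than the polyhedron itself (a quadrant in the plane has a single vertex), so ``$Q_M$ has full dimension in $L^{-1}(x)$'' does not by itself give ``$P_M$ has full dimension.'' Your sketch for the general case of boundedness, however, is not yet a proof: passing to an extreme ray and asserting that $x$ is ``forced into a lower-dimensional positive cone incompatible with the existence of basic cycles'' does not name the actual contradiction, and it is not clear how to make that step go through as stated.

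One clean way to close the gap (implicit in Section~\ref{section:borrowing complex} and not dependent on the present lemma) is to choose a hyperbolic metric $X$ with $M_X=M$; then every point of $Q_M$ is a minimizing cycle, so $Q_M$ coincides with the minset of the proper function $k\mapsto\sum_i|k_i|\,\ell_X(c_i)$ on $L^{-1}(x)$ and is therefore compact. If you prefer a purely combinatorial route, the right starting observation is that a nonzero $v\ge 0$ in $\ker L$ yields, via your exact sequence, a proper subsurface $\Sigma\subset S$ (a union of complementary regions) all of whose boundary curves are oriented with $\Sigma$ on one fixed side; one must then argue that not every such boundary curve can appear in the support of a basic cycle for $x$ with the given orientation, contradicting $M\in\M$.
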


\begin{proof}

There is a natural map from the real vector space spanned by the
curves of $M$ to $H_1(S,\Z)$.  The number $B$ is exactly the dimension
of this kernel, which is the same as $|M|-D$.  The second equality
follows from basic algebraic topology.
\end{proof}

The following proposition is not used anywhere in the paper, but it is a
basic fact about the complex of cycles.

\begin{prop}
For $g \geq 2$, the dimension of $\B(S_g)$ is $2g-3$.
\end{prop}

\begin{proof}

By Lemma~\ref{lemma:cell dimension}, the dimension of the cell of
$\B(S_g)$ associated to the oriented multicurve $M$ is one less than
the number of connected components of $S-M$.  Since each component of
$S-M$ has negative Euler characteristic, the largest possible number
of components is $-\chi(S)=2g-2$; this is when $M$ is a pants
decomposition.  Thus, the dimension of a cell of $\B(S_g)$ is at most
$2g-3$.  On the other hand, by choosing a pants decomposition of $S$ that supports $x$ and consists only of nonseparating curves, we obtain a cell of dimension $2g-3$.
\end{proof}


\section{The complex of cycles via minimizing cycles}
\label{section:borrowing complex}

The goal of this section is to give an alternate characterization of
$\B(S)$ that is related to Teichm\"uller space.  The idea is to
decompose Teichm\"uller space into regions indexed by the shortest
representatives of the homology class $x$ chosen in
Section~\ref{section:complex}.

\p{Teichm\"uller space.} For a surface $S$ with $\chi(S) < 0$ and no
boundary, we think of Teichm\"uller space $\T(S)$ as the space of
equivalence classes of marked hyperbolic surfaces $\{(X,f)\}$, where
$X$ is a complete hyperbolic surface, $f:S\to X$ is a homeomorphism,
and $(X,f) \sim (Y,g)$ if $f \circ g^{-1}$ is isotopic to an isometry;
see \cite{it} for an introduction.

There is a natural action of $\Mod(S)$ on $\T(S)$ given by the formula
$g \cdot (X,f) = (X,f \circ g^{-1})$.

It is a classical theorem that $\T(S)$ is diffeomorphic to
$\R^{-3\chi(S)}$ when $\chi(S) < 0$.  In particular, and most
important for us, $\T(S)$ is contractible.

\p{Lengths of curves.} Given a 1-cycle $c = \sum
k_ic_i$ in $S$, and a point $X=(X,f)$ in $\T(S)$, we can define the
\emph{length of $c$ in $X$} as
\[ \ell_X(c) = \sum |k_i|\ell_X(c_i) \]
where $\ell_X(c_i)$ is the length of the geodesic representative of
$f(c_i)$ in the hyperbolic metric of $X$.  It
is an important and basic fact that the function $\ell_\cdot(c) :
\T(S) \to \R_+$ is continuous.


\subsection{Minimizing cycles}
\label{section:min cycles}

Let $S=S_g$.  As in the definition of $\B(S)$, there is a fixed element $x$ of $H_1(S,\Z)$, chosen once and for all.  Given $X \in
\T(S)$, we denote by $|x|_\R$ the infimum of lengths in $X$ of all
real cycles in $S$ representing $x$, and we define a (real)
\emph{minimizing cycle} to be a cycle that realizes $|x|_\R$.

The goal of this subsection is to prove that minimizing cycles exist
(this is not obvious!), and to describe them in terms of multicurves.
In the next subsection, we will see that, for various points of
$\T(S)$, the associated spaces of minimizing cycles are polytopes that
exactly correspond to cells of $\B(S)$.

In what follows, integral minimizing cycles, rational minimizing
cycles, $|x|_\Z$, and $|x|_\Q$ are defined analogously to the real
case.

\p{Integral minimizing cycles.} We start by proving the necessary
facts about minimizing cycles in the integral case (see also
\cite{mr}), then proceed to the rational case, and finally the real
case.

\begin{lemma}
\label{lem:z exists}
Integral minimizing cycles exist, that is, $|x|_\Z$ is realized by an
integral cycle.  There are finitely many integral minimizing cycles.
\end{lemma}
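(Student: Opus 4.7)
The plan is to fix an arbitrary point $X \in \T(S)$ and deduce existence of a minimizer, together with finiteness of the minimizing set, from a finiteness statement for short integral cycles. First I would exhibit some integral representative of $x$: for instance, take any basic cycle for $x$ (these exist because $x \neq 0$ and $S$ has classes realized by oriented multicurves summing to $x$). Call its length $L_0$. Then $|x|_\Z \leq L_0 < \infty$, and any integral minimizing cycle must live in the set
\[
\mathcal{A} = \left\{ \textstyle\sum k_i c_i : \sum k_i [c_i] = x,\ k_i \in \Z,\ \ell_X\left(\sum k_i c_i\right) \leq L_0 \right\}.
\]
It therefore suffices to show that $\mathcal{A}$ is finite.

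The key point is that on a closed hyperbolic surface two finiteness phenomena cooperate. Let $\epsilon = \epsilon(X) > 0$ be the systole of $X$. Every isotopy class of simple closed curve has geodesic representative of length at least $\epsilon$, so for any $\sum k_i c_i \in \mathcal{A}$,
\[
\epsilon \sum |k_i| \;\leq\; \sum |k_i|\,\ell_X(c_i) \;\leq\; L_0,
\]
which bounds the $\ell^1$-norm of the coefficient vector, hence bounds the number of nonzero summands and the size of each coefficient. Moreover, for any nonzero $k_i$ we have $\ell_X(c_i) \leq L_0/|k_i| \leq L_0$. Since on a closed hyperbolic surface the set of isotopy classes of simple closed geodesics of length at most $L_0$ is finite (a standard consequence of the thick-thin decomposition, or of Mumford compactness), only finitely many isotopy classes can appear in the support of an element of $\mathcal{A}$. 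Combining the bound on $\sum|k_i|$ with the finite pool of possible curves shows that $\mathcal{A}$ is finite, so the infimum $|x|_\Z$ is attained and only finitely many integral cycles attain it.

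The main obstacle is essentially packaging the two standard inputs (positivity of the systole and finiteness of short simple geodesics) correctly; both are well-known. It is worth flagging that integrality is used decisively: the bound $\sum |k_i| \leq L_0/\epsilon$ has no analogue for real cycles, where coefficients may be made arbitrarily small by spreading weight across many curves. For that reason the rational and real versions of the statement, treated next, will not follow from this argument alone and will require additional input.
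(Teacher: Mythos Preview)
Your argument is correct and matches the paper's proof in substance: pick one integral representative to get an upper bound $L_0$, invoke finiteness of (simple) closed geodesics of length at most $L_0$ to bound the possible supports, and then bound the integer coefficients. The paper phrases the coefficient bound more tersely (``finitely many choices of integral coefficients'') where you make the systole estimate explicit, and it starts from an arbitrary integral cycle rather than a basic cycle, but these are cosmetic differences.
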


\begin{proof}

Let $c$ be any integral cycle representing $x$.  There are finitely
many geodesics in $X$ with length less than or equal to $\ell_X(c)$,
and hence there are finitely many candidates for the support of an
integral minimizing cycle.  Given
any choice of support, there are finitely many choices of integral
coefficients that result in a cycle of length less than or equal to
$\ell_X(c)$.  The lemma follows.
\end{proof}

\begin{lemma}
\label{simple}
An integral minimizing cycle is simple.
\end{lemma}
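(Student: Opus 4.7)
My plan is a proof by contradiction: if $c = \sum k_i c_i$ is an integral minimizing cycle at $X \in \T(S)$ but two curves in its support cross, I will produce an integral cycle representing $x$ whose length in $X$ is strictly smaller than $\ell_X(c) = |x|_\Z$.

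After reorienting individual curves, I may assume every coefficient $k_i$ is positive, and I realize each $c_i$ by its geodesic representative in $X$ (these realize the lengths $\ell_X(c_i)$ appearing in $\ell_X(c)$). Suppose two such geodesics $c_1$ and $c_2$ meet at a point $p$, and, relabeling if necessary, assume $k_1 \le k_2$. At $p$ I perform the orientation-respecting smoothing: the local picture of two crossing oriented strands is replaced by the unique resolution compatible with both orientations. Depending on the global combinatorics, the result is either two oriented closed curves $\alpha_1,\alpha_2$ or a single oriented closed curve $\alpha$; in either case the smoothing preserves homology, so $[\alpha_1]+[\alpha_2]=[c_1]+[c_2]$ (respectively $[\alpha]=[c_1]+[c_2]$) in $H_1(S,\Z)$. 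Let $\bar\alpha_j$ denote the geodesic representative of $\alpha_j$.

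I would then form the modified cycle
\[
c' \;=\; c - k_1 c_1 - k_1 c_2 + k_1 \bar\alpha_1 + k_1 \bar\alpha_2
\]
(or its evident one-curve analogue). By the homology identity, $c'$ represents $x$. The piecewise-geodesic length of $\alpha_1 \cup \alpha_2$ equals $\ell_X(c_1)+\ell_X(c_2)$ (smoothing is length-preserving), but each $\alpha_j$ acquires a genuine corner at $p$, so pulling tight strictly decreases length:
\[
\ell_X(\bar\alpha_1)+\ell_X(\bar\alpha_2) \;<\; \ell_X(c_1)+\ell_X(c_2).
\]
Multiplying by $k_1>0$ gives $\ell_X(c') < \ell_X(c) = |x|_\Z$, contradicting minimality.

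The one subtle point I expect to need care is that $\bar\alpha_1$ or $\bar\alpha_2$ may fail to be a simple closed curve, so that $c'$ as written is not yet literally a formal sum of simple closed curves. The remedy is to replace each non-simple $\bar\alpha_j$ by a formal sum of simple closed geodesics in its homology class of total length at most $\ell_X(\bar\alpha_j)$, produced by iteratively smoothing any self-intersections; the process terminates because by Lemma~\ref{lem:z exists} there are only finitely many integral cycles of length at most $\ell_X(c)$, and every smoothing strictly shortens. The strict inequality in the previous display leaves room to absorb these (weak) inequalities, so the final honest integral cycle still has length strictly less than $|x|_\Z$, completing the contradiction.
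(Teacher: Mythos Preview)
Your argument is correct and is essentially the paper's proof spelled out in detail: resolve crossings by the unique orientation-compatible smoothing, then tighten to geodesics to obtain a strictly shorter integral representative of $x$. The paper compresses this into one sentence (resolve all intersections at once, as in its Figure~\ref{resolvepic}), which avoids your iteration; if you want to match that economy, note that smoothing every crossing of the multiset $\bigcup k_i c_i$ in one pass already yields a disjoint union of embedded piecewise-geodesic loops, so a single tightening step finishes. One small quibble: Lemma~\ref{lem:z exists} as stated only bounds the number of \emph{minimizing} cycles, and your intermediate objects are not even $1$-cycles in the paper's sense (the $\bar\alpha_j$ need not be simple), so your termination citation is loose---but counting crossings gives termination directly and makes the appeal unnecessary.
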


\begin{proof}

Consider an integral cycle that is not simple, and assume its
self intersections are transverse.  By resolving the intersections as
in Figure~\ref{resolvepic}, we get a new, shorter, cycle in the class $x$.
\end{proof}

\begin{figure}[htb]
\centerline{\includegraphics[scale=.5]{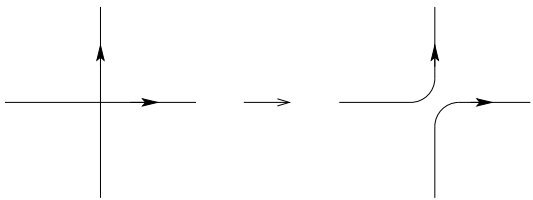}}
\caption{The resolution of an intersection.}
\label{resolvepic}
\end{figure}

We now want to prove that any two integral minimizing cycles have no
transverse intersections.  This will follow from the next lemma, which
appears, for instance, in work of Thurston \cite[Lemma 1]{wpt}.

\begin{lemma}
\label{lem:split}
Let $y$ be a nontrivial element of $H_1(S,\Z)$, and let $k$ be a positive integer.  Any simple integral 1-chain in $S$ representing $ky$ can be written as the sum of $k$ integral representatives for $y$.
\end{lemma}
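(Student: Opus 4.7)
The plan is to reduce the lemma to an algebraic statement about the subgroup $H \le H_1(S;\Z)$ generated by the classes $[c_1],\ldots,[c_n]$ of the components of the support of the given chain $c = \sum k_i c_i$. Suppose we can produce an integer combination $z_0 = \sum l_i c_i$ with $[z_0] = y$. Then the formula
\[ c \;=\; \underbrace{z_0 + \cdots + z_0}_{k-1\ \mathrm{copies}} \;+\; \bigl(c - (k-1)z_0\bigr) \]
exhibits $c$ as a sum of $k$ integral $1$-cycles (each being an integer linear combination of the $c_i$), and every summand has homology class $y$, since the last one has class $[c] - (k-1)y = ky - (k-1)y = y$. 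So everything reduces to showing that $y \in H$; and because $ky \in H$ by hypothesis, this will follow from the statement that $H$ is a primitive subgroup of $H_1(S;\Z)$, i.e., that the quotient $H_1(S;\Z)/H$ is torsion-free.

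To establish primitivity, I would apply the long exact sequence of the pair $(S, M)$, where $M = \bigsqcup_i c_i$ is viewed as an embedded closed $1$-submanifold of $S$:
\[ H_1(M;\Z) \longrightarrow H_1(S;\Z) \longrightarrow H_1(S, M;\Z). \]
The image of the left-hand map is precisely $H$, so $H_1(S;\Z)/H$ embeds into $H_1(S, M;\Z)$, and it suffices to show the latter is torsion-free. For this, let $W$ denote the compact complement of an open regular neighborhood of $M$ in $S$; by excision $H_1(S, M;\Z) \cong H_1(W, \partial W;\Z)$, and Lefschetz duality for the compact orientable surface with boundary $W$ gives $H_1(W, \partial W;\Z) \cong H^1(W;\Z)$, which is free abelian because $W$ deformation retracts onto a finite graph.

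The main (and mild) obstacle is the duality step: one must be careful that $M$ is genuinely an embedded submanifold of $S$, which uses crucially the fact that $c$ is simple, meaning the $c_i$ are pairwise disjoint. Beyond that the argument is essentially formal homological algebra; in particular the signs of the $k_i$ and any redundancies among the classes $[c_i]$ (such as isotopic components or linear relations in $H_1$) play no role.
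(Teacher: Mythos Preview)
Your proof is correct and takes a genuinely different route from the paper. The paper argues geometrically: it realizes the given chain as a regular fiber $f^{-1}(p)$ of a map $f\colon S\to S^1$ classifying the cohomology class Poincar\'e dual to $ky$, observes that $f_*\colon H_1(S)\to\Z$ lands in $k\Z$, lifts $f$ through the $k$-fold self-cover of $S^1$, and takes the $k$ summands to be the preimages of the $k$ points lying over $p$. You instead isolate the algebraic core---primitivity of the subgroup $H=\langle[c_1],\dots,[c_n]\rangle$ in $H_1(S;\Z)$---and dispatch it with the long exact sequence of the pair $(S,M)$ and Lefschetz duality; once $y\in H$, the decomposition is immediate and needs no further geometry.

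One point worth flagging for the downstream application: the paper's construction actually partitions the original chain (each parallel copy of each $c_i$ is assigned, with its original sign, to exactly one summand), so the lengths of the $k$ representatives sum to $\ell_X(c)$. Your decomposition $(k-1)z_0 + \bigl(c-(k-1)z_0\bigr)$ can involve cancellation between summands, and then $\sum_j\ell_X(z_j)>\ell_X(c)$. The lemma as stated does not demand this additivity, so your argument is complete; but the very next lemma in the paper (that $|qx|_\Z=q|x|_\Z$) tacitly uses it, so if you were rewriting that step you would need either to refine your decomposition or to supply a separate length estimate.
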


\begin{proof}

Via Poincar\'e duality, we can think of $ky$ as an element of
$H^1(S,\Z)$, and as such, we obtain a map $f : S \to S^1$ with the
properties that $f^\star([S^1]) = ky$ and, for some regular value $p
\in S^1$, $f^{-1}(p)$ is equal to the given simple chain representing
$ky$.  Since the image of $f$ is $k\Z$, we can lift $f$ as follows
\begin{figure}[H]
\begin{center}
\scalebox{1}{ 
\xymatrix{ 
 & S^1 \ar@{->}[d]^{\pi} \\
S \ar@{->}[r]^{f} \ar@{->}[ur]^{\tilde f} & S^1
}
}
\end{center}
\end{figure}
where $\pi$ is multiplication by $k$.  Now $\pi^{-1}(p) =
\{p_1,\dots,p_k\}$ is a set of $k$ regular values for $\tilde f$, and by
construction, $\cup \tilde f^{-1}(p_i)=f^{-1}(p)$ is the original representative
for $ky$, and each $\tilde f^{-1}(p_i)$ is an integral cycle that
represents $y$.
\end{proof}

\begin{lemma}
\label{lem:qc}
For $q \in \Z$, we have $|qx|_\Z = q|x|_\Z$.
\end{lemma}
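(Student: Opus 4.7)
The plan is to establish the two inequalities $|qx|_\Z \le q|x|_\Z$ and $|qx|_\Z \ge q|x|_\Z$ separately; I will assume $q \ge 0$, with the $q = 0$ case handled trivially by the empty cycle. The upper bound is immediate: taking any integral minimizing cycle $c$ for $x$ (which exists by Lemma~\ref{lem:z exists}) and scaling every coefficient by $q$ yields an integral representative of $qx$ of length $q\,\ell_X(c) = q|x|_\Z$, so $|qx|_\Z \le q|x|_\Z$.

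For the lower bound I would take an integral minimizing cycle $c'$ for $qx$. By Lemma~\ref{simple}, $c'$ is simple, and after reversing orientations of any curves with negative coefficients I may assume $c' = \sum_j k_j \gamma_j$ with $k_j \ge 0$. Since $x$ is nontrivial, so is $qx$ for $q \ge 1$, so Lemma~\ref{lem:split} (applied with $y = x$ and $k = q$) decomposes $c'$ as a formal sum $c' = c'_1 + \cdots + c'_q$ in which each $c'_i$ is a simple integral $1$-cycle representing $x$. Since each $c'_i$ is an integral representative of $x$, we have $\ell_X(c'_i) \ge |x|_\Z$, and summing would give $\ell_X(c') \ge q|x|_\Z$, i.e.\ $|qx|_\Z \ge q|x|_\Z$.

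The one step that requires care---and the main obstacle---is justifying the additivity $\ell_X(c') = \sum_i \ell_X(c'_i)$. Inspecting the construction in the proof of Lemma~\ref{lem:split}, the pieces $c'_i = \tilde f^{-1}(p_i)$ lie over distinct regular values of $\tilde f$ and so are pairwise disjoint $1$-submanifolds whose formal union recovers $c'$. Writing $c'_i = \sum_j n_{i,j}\gamma_j$ with $n_{i,j} \ge 0$, this says $\sum_i n_{i,j} = k_j$ for every $j$, and linearity of $\ell_X$ in non-negative coefficients gives
\[
\sum_{i=1}^q \ell_X(c'_i) \;=\; \sum_j \Bigl(\sum_i n_{i,j}\Bigr)\ell_X(\gamma_j) \;=\; \sum_j k_j\,\ell_X(\gamma_j) \;=\; \ell_X(c').
\]
Combining the two inequalities yields $|qx|_\Z = q|x|_\Z$.
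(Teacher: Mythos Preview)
Your proof is correct and follows the same route as the paper: apply Lemma~\ref{simple} to make the integral minimizing cycle for $qx$ simple, then Lemma~\ref{lem:split} to split it into $q$ representatives of $x$, each of length at least $|x|_\Z$. The paper's version is terser, leaving both the easy upper bound and the length-additivity step implicit, while you spell them out carefully via the non-negativity of the $n_{i,j}$.
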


\begin{proof}

Let $c'$ be an integral minimizing cycle for $qx$.  By Lemma~\ref{simple}, $c'$ is simple, and so by Lemma~\ref{lem:split}, it splits into $q$ representatives of $x$.  Each of these representatives has length at least $|x|$, and so the lemma follows.
\end{proof}

\begin{lemma}
\label{integral disjoint}
Two integral minimizing cycles have no transverse intersections.
\end{lemma}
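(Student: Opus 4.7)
The plan is to reduce the claim to Lemma~\ref{simple}, which handles a single minimizing cycle, by packaging the two cycles into one minimizing cycle for the class $2x$ and then invoking the additivity $|2x|_\Z = 2|x|_\Z$ from Lemma~\ref{lem:qc}.

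Given two integral minimizing cycles $c_1$ and $c_2$ for $x$, I would form the formal sum $c_1 + c_2$, which is an integral cycle representing $2x$. Length is subadditive with respect to formal addition of cycles (since $|c_1(s)+c_2(s)| \le |c_1(s)|+|c_2(s)|$ for each isotopy class $s \in \S$), so
\[
\ell_X(c_1+c_2) \le \ell_X(c_1) + \ell_X(c_2) = 2|x|_\Z = |2x|_\Z,
\]
with the last equality supplied by Lemma~\ref{lem:qc}. The reverse inequality $\ell_X(c_1+c_2) \ge |2x|_\Z$ is immediate from the definition, so equality holds throughout and $c_1+c_2$ is itself an integral minimizing cycle, now for $2x$. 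Lemma~\ref{simple} then applies, telling us that the curves in the support of $c_1+c_2$ are pairwise disjoint.

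To conclude, I would observe that no cancellation can occur in the sum $c_1+c_2$: any isotopy class $s$ for which $|c_1(s)+c_2(s)| < |c_1(s)|+|c_2(s)|$ would make the displayed inequality strict and thereby contradict the minimality of $c_1+c_2$. Consequently every curve that appears in either $c_1$ or $c_2$ also appears in the support of $c_1+c_2$, so by the previous paragraph these curves are pairwise disjoint. In particular, no curve of $c_1$ meets a curve of $c_2$ transversely, which is the desired conclusion. The only point requiring care is this bookkeeping step ruling out cancellation; everything else is a direct application of Lemmas~\ref{simple} and~\ref{lem:qc}, so I do not expect a genuine obstacle.
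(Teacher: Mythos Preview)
Your proposal is correct and follows essentially the same route as the paper: form $c_1+c_2$, use Lemma~\ref{lem:qc} to see it is an integral minimizing cycle for $2x$, and then invoke Lemma~\ref{simple}. The paper's proof is a terse two-line contrapositive of exactly this argument; your version is simply more explicit about the subadditivity of length and the bookkeeping that rules out cancellation in the sum, which the paper leaves implicit.
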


\begin{proof}

Suppose that two integral minimizing cycles for $x$ have transverse intersections.  By Lemma~\ref{lem:qc}, their sum would then be an integral minimizing cycle for $2x$ that is not simple, contradicting Lemma~\ref{simple}.
\end{proof}

\p{Rational minimizing cycles.}  The next lemma will help bridge
the gap between integral minimizing cycles and real minimizing cycles.

\begin{lemma}
\label{rational simple}
We have $|x|_\Q = |x|_\Z$; in particular, rational minimizing
cycles exist.  Also, any rational minimizing cycle is simple, and any two
rational minimizing cycles are disjoint.
\end{lemma}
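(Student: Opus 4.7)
The plan is to bootstrap everything from the integral case (Lemmas \ref{lem:z exists}, \ref{simple}, \ref{lem:qc}, \ref{integral disjoint}) by the standard trick of clearing denominators.

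First I would prove the equality $|x|_\Q = |x|_\Z$. The inequality $|x|_\Q \leq |x|_\Z$ is immediate because every integral cycle is rational. For the reverse inequality, let $c = \sum k_i c_i$ be any rational cycle representing $x$, choose a positive integer $N$ with $Nk_i \in \Z$ for all $i$, and observe that $Nc$ is an integral cycle representing $Nx$. Thus
\[
N \ell_X(c) = \ell_X(Nc) \geq |Nx|_\Z = N|x|_\Z,
\]
where the last equality uses Lemma \ref{lem:qc}. Dividing by $N$ and taking the infimum over rational $c$ gives $|x|_\Q \geq |x|_\Z$. Since $|x|_\Z$ is realized by an integral (hence rational) cycle by Lemma \ref{lem:z exists}, rational minimizing cycles exist.

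Next I would handle simplicity. Let $c$ be a rational minimizing cycle for $x$ and pick $N$ clearing denominators, so $Nc$ is an integral cycle representing $Nx$ with length $N|x|_\Q = N|x|_\Z = |Nx|_\Z$. Hence $Nc$ is an integral minimizing cycle for $Nx$, and Lemma \ref{simple} applies (its proof works verbatim for any nontrivial homology class, not just $x$) to conclude $Nc$ is simple. Since simplicity of $Nc$ means its support is a collection of pairwise disjoint simple closed curves, the same is true of $c$.

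For disjointness of two rational minimizing cycles $c_1, c_2$, I would consider $c_1 + c_2$, a rational cycle of length $2|x|_\Q$ representing $2x$. By Lemma \ref{lem:qc} and the equality $|\cdot|_\Q = |\cdot|_\Z$ just proved (applied to $2x$), this is a rational minimizing cycle for $2x$, so by the previous paragraph it is simple. Transverse intersections between $c_1$ and $c_2$ would produce non-simpleness in $c_1+c_2$, a contradiction; combined with the fact that each $c_i$ is individually simple, this shows that the supports of $c_1$ and $c_2$ form a disjoint union of simple closed curves.

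I do not foresee a serious obstacle: the entire argument is a denominator-clearing reduction, and the only thing to be slightly careful about is to note that the proofs of Lemmas \ref{simple} and \ref{integral disjoint} apply with $x$ replaced by any nontrivial integral class (since Lemma \ref{lem:qc} itself is stated with $q \in \Z$ for the fixed class $x$, but its proof uses only Lemmas \ref{simple} and \ref{lem:split}, which hold for arbitrary classes).
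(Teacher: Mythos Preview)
Your proof is correct and follows essentially the same denominator-clearing strategy as the paper's own proof. The only cosmetic difference is in the disjointness step: the paper takes the \emph{average} $(c_1+c_2)/2$ to get a non-simple rational minimizing cycle for $x$, whereas you take the \emph{sum} $c_1+c_2$ and work with $2x$; these are trivially equivalent.
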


\begin{proof}

Suppose $c$ is a rational cycle for $x$ with length less than
$|x|_\Z$.  Choose $q \in \Z$ so that $qc$ is integral.  Then $qc$ is
an integral cycle representing $qx$ with length less than
$q|x|_\Z$, contradicting Lemma~\ref{lem:qc}

Consider any rational minimizing cycle $c$ that is not simple.
Choose $q \in \Z$ so that $qc$ is integral.  By Lemma~\ref{lem:qc},
the cycle $qc$ is then an integral minimizing cycle for $qx$ that
is not simple, contradicting Lemma~\ref{simple}.

If we have two rational minimizing cycles that intersect, then we can take the ``average'' of the two cycles to get a rational minimizing cycle for
$x$ that is not simple, violating the second conclusion of the
lemma.
\end{proof}

\p{The borrowing lemma.}  The following lemma is the key technical
statement which will allow us to relate the complex of cycles to
the concept of a minimizing cycle.

\begin{lemma}[Borrowing lemma]
\label{lem:borrowing}
Let $c = \displaystyle \sum_{i=1}^N k_ic_i$ be a real positive cycle representing $x \in H_1(S,\Z)$.  Suppose that (after reindexing) there is a relation
\[ v_1 [c_1] + \cdots  + v_m [c_m] = v_{m+1} [c_{m+1}] + \cdots + v_{m+n} [c_{m+n}] \]
in $H_1(S,\R)$, where each $v_i$ is positive.  Let 
\[ L_1 = \sum_{i=1}^m v_i\ell_X(c_i) \ \ \mbox{ and } \ \ L_2 = \sum_{i=m+1}^{m+n}v_i\ell_X(c_i) \]
and assume (without loss of generality) that $L_1 \leq L_2$.  Let
$\delta$ be a positive real number less than or equal to $\min\{k_i/v_i\}$, where the minimum is taken over $m+1 \leq i \leq m+n$.  The cycle
\[ c_\delta = \sum_{i=1}^m (k_i+\delta v_i) c_i + \sum_{i=m+1}^{m+n} (k_i-\delta v_i) c_i + \sum_{i=m+n+1}^N k_ic_i \]
is a representative for $x$.  Furthermore, $\ell_X(c_\delta) = \ell_X(c)$ if and only if $L_1=L_2$ and $\ell_X(c_\delta) < \ell_X(c)$ if and only if $L_1 < L_2$.  Finally, the supports of $c$ and $c_\delta$ are equal if $\delta < \min\{k_i/v_i\}$.
\end{lemma}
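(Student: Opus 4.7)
The lemma is a three-part bookkeeping statement---(i) $c_\delta$ represents $x$, (ii) the length changes by exactly $\delta(L_1 - L_2)$, and (iii) the support is preserved under the strict version of the bound on $\delta$---and my plan is to verify each part by a direct computation, treating them independently.

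First, to see that $[c_\delta] = x$, I would compute the formal difference $c_\delta - c$ in the free $\R$-module on oriented simple closed curves and pass to $H_1(S,\R)$. The only nonzero contributions come from the first $m+n$ curves, giving
\[ [c_\delta] - [c] \;=\; \delta\left( \sum_{i=1}^{m} v_i[c_i] \;-\; \sum_{i=m+1}^{m+n} v_i[c_i] \right) \;=\; 0, \]
by the hypothesized relation in $H_1(S,\R)$. Hence $[c_\delta] = [c] = x$ as elements of $H_1(S,\Z)$.

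For the length statement, I would first observe that the constraint $\delta \leq \min\{k_i/v_i : m+1 \leq i \leq m+n\}$, combined with positivity of each original $k_i$, makes every coefficient of $c_\delta$ non-negative. This lets me drop the absolute values in the definition of $\ell_X$ and expand directly:
\[ \ell_X(c_\delta) \;=\; \ell_X(c) + \delta\sum_{i=1}^{m} v_i\ell_X(c_i) - \delta\sum_{i=m+1}^{m+n} v_i\ell_X(c_i) \;=\; \ell_X(c) + \delta(L_1 - L_2). \]
Since $\delta > 0$, both of the stated equivalences follow immediately from this single identity.

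Finally, for the support statement, if $\delta < \min\{k_i/v_i\}$ strictly, then every $k_i - \delta v_i$ with $m+1 \leq i \leq m+n$ is strictly positive, every $k_i + \delta v_i$ with $i \leq m$ is strictly positive, and the remaining $k_i$ are unchanged; hence the support of $c_\delta$ coincides with that of $c$. There is no real obstacle here: the lemma is arranged so that the given relation, the positivity of the $k_i$, and the bound on $\delta$ conspire to make each verification a short computation. The only mild care required is to establish non-negativity of all coefficients of $c_\delta$ before dropping the absolute values in the length formula, which is exactly what the upper bound on $\delta$ is designed to do.
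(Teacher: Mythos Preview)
Your proof is correct and is exactly the simple calculation the paper has in mind; the paper itself omits the argument, saying only that it is a straightforward computation left to the reader. The one point worth flagging explicitly, as you do, is that the bound $\delta \le \min\{k_i/v_i\}$ guarantees non-negativity of all coefficients so that the absolute values in the length formula disappear.
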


The proof of the Borrowing lemma is a simple calculation, which we
leave to the reader.  As a basic example in genus 2, consider the
curves $a$, $b$, and $c$ in Figure~\ref{borrowpic}.  If the homology classes
are $x$, $y$, and $x-y$, respectively, and the lengths $\ell_a$,
$\ell_b$, and $\ell_c$ are close to zero and satisfy $\ell_a = \ell_b
+ \ell_c$, then the cycle $\epsilon a + (1-\epsilon)b+(1-\epsilon)c$ is
minimizing for any $\epsilon \in [0,1]$.  This interval's worth of minimizing cycles exactly corresponds to the edge in the left hand side of Figure~\ref{cellspic}.

\begin{figure}[htb]
\psfrag{x}{$a$}
\psfrag{y}{$b$}
\psfrag{z}{$c$}
\centerline{\includegraphics[scale=.5]{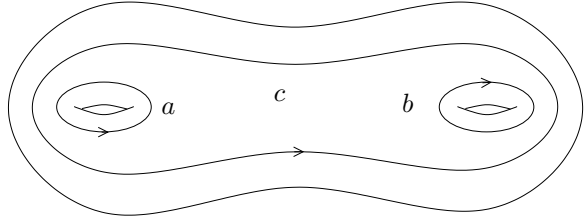}}
\caption{Curves in $S_2$ that give rise to ``borrowing''.}
\label{borrowpic}
\end{figure}

\p{Real minimizing cycles.}  Finally, we have the desired result about
real minimizing cycles.

\begin{lemma}
\label{real simple}
We have $|x|_\R = |x|_\Q = |x|_\Z$; in particular, real
minimizing cycles exist.  Also, any real minimizing cycle is simple,
and any two real minimizing cycles are disjoint.
\end{lemma}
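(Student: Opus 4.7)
The plan is to follow the template of Lemmas~\ref{rational simple} and~\ref{integral disjoint}, establishing the three claims in the order (i) $|x|_\R = |x|_\Q$, (ii) simplicity of real minimizing cycles, and (iii) disjointness of pairs. For (i), I would bridge the real case to the rational case by a density argument. Given any positive real cycle $c = \sum k_i c_i$ representing $x$, the condition $\sum k_i [c_i] = x$ in $H_1(S,\Z)$ cuts out a rational affine subspace of $\R^N$ in which the tuple $(k_i)$ lies. Since positivity is an open condition, there are rational positive tuples $(k_i^{(n)})$ approaching $(k_i)$ and still satisfying the homology constraint; the corresponding cycles $c^{(n)} = \sum k_i^{(n)} c_i$ are rational representatives of $x$, and by continuity of length $\ell_X(c^{(n)}) \to \ell_X(c)$. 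Hence $|x|_\Q \leq |x|_\R$, and the reverse inequality is immediate, so combined with Lemma~\ref{rational simple} we obtain $|x|_\R = |x|_\Q = |x|_\Z$. Existence of a real minimizing cycle is then automatic, since any integral minimizing cycle---which exists by Lemma~\ref{lem:z exists}---has length $|x|_\Z = |x|_\R$.

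For (ii), suppose $c = \sum k_i c_i$ is a real minimizing cycle, taken positive after reorienting, and that two support curves (say $c_1$ and $c_2$ after relabeling) intersect. Their geodesic representatives in $X$ are distinct simple closed geodesics and so must meet transversely. Set $\epsilon = \min(k_1, k_2) > 0$, and replace the subsum $\epsilon c_1 + \epsilon c_2$ in $c$ by $\epsilon \gamma$, where $\gamma$ is the oriented resolution of $c_1 \cup c_2$ at each of their intersections. The resulting cycle still represents $x$ because the oriented resolution is homologous to $[c_1] + [c_2]$, but its length is strictly smaller than $\ell_X(c)$: at each resolved intersection $\gamma$ acquires a corner, and tightening these corners to geodesic representatives strictly shortens length in negative curvature, just as in the proof of Lemma~\ref{simple}. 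This contradicts minimality.

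For (iii), given two real minimizing cycles $c$ and $c'$ for $x$, the average $\tfrac{1}{2}(c + c')$ is a positive real cycle representing $x$ of length $\tfrac{1}{2}(\ell_X(c) + \ell_X(c')) = |x|_\R$, hence is itself real minimizing and so simple by (ii); consequently every curve of $\mathrm{supp}(c) \cup \mathrm{supp}(c')$ is disjoint from every other, which is the required disjointness. I expect the principal difficulty to lie in step (ii): a purely analytic perturbation-from-rational argument does not give a contradiction directly, since nearby rational cycles on the same support need not themselves be minimizing, and the length gap need not be uniform. The essential input must therefore be the direct geometric move of resolving intersections and invoking negative-curvature shortening, exactly as in Lemma~\ref{simple}.
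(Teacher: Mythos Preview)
Your argument is correct, but it follows a genuinely different route from the paper's proof. The paper leans entirely on the Borrowing lemma (Lemma~\ref{lem:borrowing}): for (i), one observes that if a real cycle has irrational coefficients then the corresponding support curves must be linearly dependent in homology (since $x$ is integral), and a borrowing move along that relation reduces the number of irrational coefficients without increasing length; iterating yields a rational cycle of no greater length. For (ii), the same borrowing move turns a nonsimple real minimizing cycle into a nonsimple rational cycle of the same length, contradicting Lemma~\ref{rational simple}. Part (iii) is handled analogously.

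Your approach instead uses a density argument for (i)---the homology constraint cuts out a rational affine subspace, rational points are dense, length is continuous---and a direct geometric resolution for (ii), mimicking Lemma~\ref{simple} at the level of real coefficients. Both are valid. Your route is slightly more elementary in that it avoids the Borrowing lemma entirely for this statement; the paper's route has the virtue of reusing a tool that is needed anyway for the structure of borrowing cells in Section~\ref{section:borrowing cells}. One small point worth tightening in your write-up of (iii): the average $\tfrac{1}{2}(c+c')$ need not be \emph{positive} if $c$ and $c'$ share a support curve with opposite orientations, but in that case cancellation strictly decreases length, contradicting minimality---so the case does not arise and your conclusion stands.
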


\begin{proof}

Let $c = \sum k_ic_i$ be a nonrational real cycle.  Since $x$ is rational, it follows that the set of $c_i$ with $k_i$ irrational are linearly dependent.  An application of the Borrowing lemma allows us to reduce the number of irrational coefficients without increasing the length of the representative.  In the end, we get a rational cycle.  Thus, $|x|_\R = |x|_\Q$, and these are equal to $|x|_\Z$ by Lemma~\ref{rational simple}.

If $c$ is a real cycle that is not simple, we can, as above, use the Borrowing lemma to get a rational cycle of the same length that is also not simple.  Since rational minimizing cycles are simple (Lemma~\ref{rational simple}), it follows that $c$ is not a real minimizing cycle.

The proof of the last statement follows similarly.
\end{proof}


\subsection{The space of minimizing cycles}
\label{section:borrowing cells}

The goal of this subsection is to describe the parameter space of
minimizing cycles for any one point of $\T(S)$.  We will see that
each such parameter space corresponds to a cell of $\B(S)$.
As above, $x \in H_1(S,\Z)$ and $X \in \T(S)$ are fixed.


\p{Minimizing multicurves.}  A consequence of Lemma~\ref{real simple}
is that there is a canonical multicurve $M=M_X$ in $S$, called the
\emph{minimizing multicurve}, which is the smallest multicurve with
the property that every minimizing cycle for $x$ is supported on $M$.
The minimizing multicurve $M$ is obtained by taking
the union of the supports of all minimizing cycles for $x$.  Note that
$M$ is a union of geodesics in $X$.

\begin{lemma}
\label{lemma:orient}
There is a unique orientation of the minimizing multicurve $M=M_X$ so
that each minimizing cycle for $x$ is a nonnegative linear combination
of curves of $M$.
\end{lemma}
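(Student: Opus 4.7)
The plan is to assign to each component of $M$ the orientation it inherits from any minimizing cycle that contains it in its support, and then to prove that this orientation is well-defined by a short convexity/averaging argument. Uniqueness is immediate: since $M$ is by definition the union of supports of minimizing cycles for $x$, every component of $M$ appears (with some coefficient) in at least one minimizing cycle, so any orientation meeting the conclusion of the lemma must agree on each component with the orientation coming from such a cycle.

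For existence (i.e.\ well-definedness of the prescription just given), I would argue by contradiction. Suppose some underlying simple closed curve $c$ appears in the support of two minimizing cycles $c'=\sum k'_i c'_i$ and $c''=\sum k''_j c''_j$ with opposite orientations; say $c'_a = c$ with coefficient $k'_a>0$ and $c''_b = \bar{c}$ (the reverse) with coefficient $k''_b>0$. Form the average
\[
\gamma \;=\; \tfrac{1}{2}(c' + c''),
\]
which is a real positive cycle representing $x$. When rewritten so that $c$ appears with only one orientation (combining the contributions of $c'_a$ and $c''_b$), the coefficient on $c$ becomes $\tfrac{1}{2}|k'_a - k''_b|$ rather than the uncancelled $\tfrac{1}{2}(k'_a + k''_b)$, so the length drops by $\min\{k'_a,k''_b\}\,\ell_X(c) > 0$. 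Hence
\[
\ell_X(\gamma) \;<\; \tfrac{1}{2}\bigl(\ell_X(c') + \ell_X(c'')\bigr) \;=\; |x|_\R,
\]
contradicting the definition of $|x|_\R$.

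Once orientations are consistently defined on each component of $M$, the final clause of the lemma is automatic: any minimizing cycle is supported in $M$ (by the definition of $M$) and, by what we just proved, uses each component of its support with the chosen orientation, hence is a nonnegative linear combination of the oriented curves of $M$.

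The only place where care is needed is the length calculation in the averaging step, specifically the observation that combining two oppositely oriented occurrences of the same geodesic strictly decreases length (one could also phrase this as an application of the Borrowing lemma to the trivial relation $[c] - [c]=0$, but the direct calculation is cleaner). No deeper input is required: the argument relies only on Lemma~\ref{real simple} to guarantee that $\gamma$'s length is at least $|x|_\R$ and on the elementary fact that lengths of cycles are computed with absolute values of coefficients.
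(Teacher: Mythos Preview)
Your proof is correct and follows essentially the same approach as the paper: both arguments combine two minimizing cycles that disagree on the orientation of some curve and exploit the resulting cancellation to produce a strictly shorter representative. The only cosmetic difference is that the paper uses the sum $c'+c''$ (a representative of $2x$ with length strictly less than $2|x|$, contradicting Lemma~\ref{lem:qc}) rather than the average, whereas your averaging lets you contradict the definition of $|x|_\R$ directly without invoking that lemma.
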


\begin{proof}

Suppose $M$ is the collection of oriented curves $\{c_1,\dots, c_n\}$,
and that $c = \sum k_ic_i$ and $c'=\sum k_i'c_i$ are minimizing cycles
for $x$ with $k_i' < 0 < k_i$ for some particular $i$.  In this case,
$c+c'$ is a representative for $2x$ with length strictly less than
twice that of $c$.  This contradicts Lemma~\ref{lem:qc}.  
\end{proof}

As a consequence of the lemma, it makes sense to focus our attention on
positive minimizing cycles for $x$.


\p{Borrowing relations.}  Let $M=\{c_i\}$ be an oriented multicurve.
With the Borrowing lemma in mind, we call any relation amongst the
$[c_i]$ in $H_1(S,\Z)$ a \emph{borrowing relation}.  If we cut $S$
along $M$, then each resulting connected component gives rise to a
borrowing relation, which we call a \emph{subsurface borrowing
  relation}.

\begin{lemma}
\label{lemma:borrowing relations}
Let $X \in \T(S)$, and let $M=\{c_i\}$ be an oriented multicurve that
is the union of (oriented) supports of positive cycles that represent
$x$.  The following are equivalent.
\begin{enumerate}
\item All positive cycles for $x$ supported in $M$ have the same length.
\item For every subsurface borrowing relation $\sum v_i [c_i]=0$, the
  $\ell_X(c_i)$ satisfy $\sum v_i \ell_X(c_i)=0$.
\end{enumerate}
In particular, if $M$ is a minimizing multicurve, then both conditions
are satisfied.
\end{lemma}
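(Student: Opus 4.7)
The plan is to prove the equivalence of (1) and (2) by applying the Borrowing lemma in both directions, and then handle the ``in particular'' clause by noting that minimality gives (2) directly. The first step is to produce an auxiliary positive cycle that has full support on $M$: the hypothesis supplies finitely many positive cycles $d_1, \dots, d_r$ representing $x$ whose supports together equal $M$, so $\bar c = \tfrac{1}{r}(d_1 + \cdots + d_r)$ is a positive cycle representing $x$ with support equal to all of $M$. This reference cycle has every coefficient strictly positive and will let us invoke the Borrowing lemma in either direction without accidentally shrinking the support.

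For (1) $\Rightarrow$ (2), take a subsurface borrowing relation $\sum v_i[c_i] = 0$, split the $v_i$ into positive and negative parts to match the statement of the Borrowing lemma, and apply the lemma to $\bar c$. Because every coefficient of $\bar c$ is strictly positive, sufficiently small $\delta > 0$ yields a positive cycle $\bar c_\delta$ representing $x$ still supported on all of $M$. Condition (1) forces $\ell_X(\bar c_\delta) = \ell_X(\bar c)$, and the last clause of the Borrowing lemma then gives $L_1 = L_2$, i.e., $\sum v_i \ell_X(c_i) = 0$ when the $v_i$ are read with their signs.

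For (2) $\Rightarrow$ (1), I would invoke the standard fact from algebraic topology that the kernel of the map $\R^{|M|} \to H_1(S,\R)$ sending each basis vector to $[c_i]$ is spanned by the subsurface borrowing relations coming from the components of $S \setminus M$ (this follows by expressing any bounding 2-chain as a real combination of the closures of these components). Given two positive cycles $c = \sum k_i c_i$ and $c' = \sum k_i' c_i$ on $M$ representing $x$, the equation $\sum (k_i - k_i')[c_i] = 0$ is one such relation; writing it as a linear combination of subsurface relations and applying (2) termwise gives $\ell_X(c) - \ell_X(c') = \sum (k_i - k_i') \ell_X(c_i) = 0$.

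For the ``in particular'' clause, suppose $M = M_X$ is a minimizing multicurve. By Lemma \ref{lemma:orient} together with the definition of $M$, averaging finitely many positive minimizing cycles whose supports cover $M$ produces a positive minimizing cycle $c^\ast$ fully supported on $M$. For any subsurface borrowing relation with $L_1 \ne L_2$, the Borrowing lemma applied to $c^\ast$ would yield a representative of $x$ of length strictly less than $\ell_X(c^\ast) = |x|_\R$, contradicting minimality; hence (2) holds and (1) follows from the equivalence. The main obstacle is just the bookkeeping needed to arrange a fully-supported reference cycle, together with the standard but off-stage fact that subsurface relations generate all homological relations among the components of $M$.
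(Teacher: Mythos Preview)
Your proof is correct and follows essentially the same route as the paper's: both hinge on the Borrowing lemma together with the algebraic-topology fact that the subsurface borrowing relations span all borrowing relations among the $[c_i]$. The paper packages this by introducing an intermediate statement (3) (\emph{every} borrowing relation satisfies $\sum v_i\ell_X(c_i)=0$), proving $(1)\Leftrightarrow(3)$ via the Borrowing lemma and $(3)\Leftrightarrow(2)$ via the spanning fact; you instead run the two implications directly, and in $(2)\Rightarrow(1)$ you bypass the Borrowing lemma entirely by using linearity of the length functional on positive cycles. Your explicit construction of a fully supported reference cycle $\bar c$ and your separate treatment of the ``in particular'' clause are details the paper leaves implicit, so your write-up is if anything more careful.
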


\begin{proof}

By the Borrowing lemma, statement (1) is equivalent to the statement:
(3)~for every borrowing relation $\sum v_i [c_i]=0$, the $\ell_X(c_i)$
satisfy $\sum v_i \ell_X(c_i)=0$.  By basic algebraic topology, any
borrowing relation between the $[c_i]$ is a consequence of the
subsurface borrowing relations between the $[c_i]$.  Hence (3) is
equivalent to statement (2).
\end{proof}


\p{Borrowing cells.}  We now explain how the minimizing multicurve $M=M_X$ gives a
natural way to parameterize the space of minimizing cycles in $X$.  As
in Lemma~\ref{lemma:orient}, we orient $M$ so that all minimizing
cycles are positive.

As in Section~\ref{section:complex}, $D=D(M)$ is the dimension of the
span of $M$ in $H_1(S,\R)$ and $B=B(M)$ is $|M|-D$.  There is a
natural map $\R^{|M|} \to H_1(S,\R)$, and the preimage of the subspace
spanned by $x$ is a $B$-dimensional affine subspace $V_X$ of
$\R^{|M|}$.  By taking lengths of cycles in $X$, we get a map $L_X :
V_X \to \R_+$.

We can think of the minset $P_X$ of $L_X$ in $V_X$ as the parameter
space of minimizing cycles for $x$ in $X$.  As $P_X$ is the minset of a sum
of convex functions it is a convex set $P_X$, which we call a
\emph{borrowing cell}.  Since each of these (finitely many) functions
is proper, it follows that $P_X$ is a compact.  Finally, since each of
the functions is the absolute value of a linear function, it follows
that $P_X$ is a polytope (it is cut out by the zero sets of the
individual functions).

\begin{lemma}
Each borrowing cell $P_X$ is a compact convex polytope in $V_X$.
\end{lemma}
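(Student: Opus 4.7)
The plan is to formalize the three assertions sketched in prose just before the lemma statement—that $P_X$ is convex, compact, and polytopal—working directly with $L_X$ as a finite sum of convex piecewise-linear functions on the affine subspace $V_X \subset \R^{|M|}$.

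First I would record the structural observation: the function $L_X$ is the restriction to $V_X$ of the map
\[ (k_1,\dots,k_{|M|}) \mapsto \sum_{i=1}^{|M|} |k_i|\,\ell_X(c_i). \]
Each summand is the absolute value of a coordinate functional scaled by the positive constant $\ell_X(c_i)$, hence is convex and piecewise linear with two linear pieces meeting along the hyperplane $\{k_i=0\}$. Sums of convex (respectively, piecewise-linear) functions are convex (respectively, piecewise linear), so $L_X$ is convex and piecewise linear on $V_X$. Since the minset of a convex function on a convex set is convex, this yields convexity of $P_X$.

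Next I would verify compactness by showing that $L_X$ is proper. Given a sequence $(k^{(n)})$ in $\R^{|M|}$ with $\|k^{(n)}\|\to\infty$, some coordinate $|k_i^{(n)}|$ must tend to infinity; because $\ell_X(c_i)>0$ (geodesics on a hyperbolic surface have positive length) and every other summand is nonnegative, $L_X(k^{(n)})\to\infty$. Thus $L_X$ is proper on $\R^{|M|}$, and remains proper after restriction to the closed, nonempty affine subspace $V_X$ (nonempty because, by hypothesis, the span of $M$ in $H_1(S,\R)$ contains $x$). A proper continuous function on $V_X$ attains its minimum, and its minset is closed and bounded, so $P_X$ is compact.

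Finally, for the polytope structure I would stratify $V_X$ by intersection with the closed orthants of $\R^{|M|}$; on each such stratum the absolute values drop and $L_X$ agrees with an affine function of the coordinates. The minimum on each stratum is therefore cut out by that stratum's sign inequalities together with a finite system of linear equations, giving a polytopal piece of $P_X$. Because $P_X$ is convex and equals the union of these finitely many polytopal pieces, it is itself a convex polytope. I do not anticipate a genuine obstacle: the argument is essentially bookkeeping in piecewise-linear convex analysis, and the only geometric input—used only to secure properness—is the positivity of the geodesic lengths $\ell_X(c_i)$.
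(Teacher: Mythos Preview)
Your proposal is correct and follows essentially the same approach as the paper: the paper's proof is the prose paragraph immediately preceding the lemma, which makes exactly your three points---convexity from the fact that $L_X$ is a sum of convex functions, compactness from properness of that sum, and polytopality from the piecewise-linear (absolute-value-of-linear) structure. Your write-up simply supplies the details the paper leaves implicit.
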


We now prove that the borrowing cell $P_X$ is the convex hull of its
basic cycles for $x$.  This gives the key connection between
minimizing cycles and the complex $\B(S)$.  Recall from
Section~\ref{section:complex} that $P_M$ is the cell of $\B(S)$
associated to a multicurve $M$.

\begin{lemma}
\label{lemma:vertices vs multicurves}
There is a bijective correspondence between the set of basic
cycles for $x$ that are supported in $M=M_X$ and the set of
vertices of $P_X$.  In particular, $P_X=P_{M_X}$.
\end{lemma}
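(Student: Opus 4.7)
My plan is to identify $P_X$ as an explicit polyhedron inside $\R^{|M|}$ and then read off its vertices. The first step is to show
\[
P_X \;=\; V_X \cap \R_{\geq 0}^{|M|}.
\]
The inclusion $\subseteq$ is immediate from Lemma~\ref{lemma:orient}: with the chosen orientation on $M = M_X$, every minimizing cycle for $x$ has nonnegative $M$-coefficients. For the reverse inclusion, the definition of the minimizing multicurve ensures that some minimizing cycle is supported on $M$, and then Lemma~\ref{lemma:borrowing relations}(1) applied to $M$ forces every positive cycle for $x$ supported on $M$ to have the same length as this minimizing cycle. Every such cycle is therefore itself minimizing, and so lies in $P_X$.

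The second step is to identify the vertices of this polytope. Given $p = \sum k_i c_i \in P_X$, let $T = \{i : k_i > 0\}$ be its support and let $F_T$ be the face of $P_X$ cut out by the equalities $k_j = 0$ for $j \notin T$. Near $p$, the set $F_T$ is the affine space of preimages of $x$ under the linear map $\R^T \to H_1(S,\R)$ sending the standard basis to $\{[c_i] : i \in T\}$, and its dimension equals $|T| - \dim\,\mathrm{span}\{[c_i] : i \in T\}$. Consequently $p$ is a vertex of $P_X$ if and only if this family is linearly independent in $H_1(S,\R)$; together with positivity on $T$ and the condition $\sum_{i \in T} k_i [c_i] = x$, this is exactly the definition of a basic cycle for $x$ supported in $M$. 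The tricky direction uses extremity: if the family were dependent, any nonzero element $v$ of the kernel of $\R^T \to H_1(S,\R)$ would give $p \pm \epsilon v \in P_X$ for small $\epsilon > 0$, contradicting the fact that $p$ is an extreme point.

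Finally, since $P_X$ is a compact convex polytope (as noted in the previous lemma), it is the convex hull of its vertices. The natural map $\R^M \hookrightarrow \R^\S$ is injective because $M$ is a multicurve and so its curves represent distinct elements of $\S$; it therefore carries the vertex set of $P_X$ bijectively onto the set of basic cycles for $x$ supported in $M_X$ viewed in $\R^\S$, and takes $P_X$ onto $P_{M_X}$. This gives the claimed bijective correspondence and the equality $P_X = P_{M_X}$. The main point to get right is the dimension count for $F_T$ in the second step; everything else is either direct from the quoted lemmas or a standard convex-geometric argument.
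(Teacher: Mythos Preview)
Your proof is correct and follows essentially the same approach as the paper: both arguments use Lemma~\ref{lemma:orient} to confine $P_X$ to the nonnegative orthant, Lemma~\ref{lemma:borrowing relations} to show all nonnegative cycles for $x$ supported in $M$ are minimizing, and the extreme-point characterization of vertices via line segments. Your explicit identification $P_X = V_X \cap \R_{\geq 0}^{|M|}$ at the outset is a clean way to organize what the paper weaves into its two directions, but the underlying ideas are identical.
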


\begin{proof}

We use the following characterization: a point of a polytope in
Euclidean space is a vertex if and only if every open line segment
through that point leaves the polytope.

First, let $p$ be a point of $P_X$ corresponding to a basic minimizing
cycle.  If we move away from $p$ in any direction of $V_X$, some
coordinate must change from zero to nonzero, and it follows that, in
any open line segment of $V_X$ through $p$, there must be a point with
at least one negative coordinate.  By Lemma~\ref{lemma:orient}, such
points are not contained in $P_X$, and so $p$ is a vertex.

Next, let $p$ be a point of $P_X$ that corresponds to a minimizing
cycle that is not basic.  In this case, there is an open line segment
of $V_X$ through $p$ so that all points on the line segment have
positive coordinates.  Since $M$ is a minimizing cycle, it follows
from the last sentence of Lemma~\ref{lemma:borrowing relations} that
statement (1) of the same lemma holds, and we see that the line segment
is contained in $P_X$.  Thus, $p$ is not a vertex.
\end{proof}

The reader is invited to revisit the examples of
Section~\ref{section:complex}, and reinterpret them as borrowing cells
in the context of minimizing cycles.

Recall from Section~\ref{section:complex} that $\M$ is the category of
oriented multicurves that are made up of basic cycles for $x$, and
$\F$ is the category of cells of $\B(S)$.  The set of borrowing cells
also forms a category under inclusion, if we view each borrowing cell
as a subset of $\R^\S$.

\begin{lemma}
\label{lemma:covariant}
The following categories are isomorphic: $\M$, $\F$, and the category of borrowing cells.
\end{lemma}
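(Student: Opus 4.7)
The plan is to identify all three categories through the single assignment $M \mapsto P_M$. The isomorphism $\M \cong \F$ is definitional (Section~\ref{section:complex}), and Lemma~\ref{lemma:vertices vs multicurves} already gives $P_X = P_{M_X}$ as subsets of $\R^\S$, so the entire proof reduces to two claims: (i) $M_X \in \M$ for every $X \in \T(S)$, and (ii) every $M \in \M$ arises as $M_X$ for some $X$.

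For (i), the vertices of $P_X$ correspond to basic cycles for $x$ supported on $M_X$ (Lemma~\ref{lemma:vertices vs multicurves}) and $P_X$ is their convex hull, so every minimizing cycle is a convex combination of basic ones and its support lies in the union of supports of basic minimizing cycles. Since $M_X$ is the union of all minimizing supports, it equals this smaller union; thus each curve of $M_X$ appears in some basic cycle's support, and $M_X \in \M$. Morphisms in the borrowing cell category are literal set inclusions in $\R^\S$, which coincide with those in $\F$, so the assignment $P_X \mapsto P_{M_X}$ is fully faithful.

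For (ii), given $M = \{c_1,\dots,c_n\} \in \M$ with its prescribed orientation, I would construct $X$ with $M_X = M$ as follows. Extend $M$ to a pants decomposition of $S$, and in Fenchel--Nielsen coordinates assign small positive lengths $\ell_i$ to the $c_i$ satisfying every subsurface borrowing relation $\sum v_i\ell_i=0$ coming from a component of $S - M$; simultaneously make the lengths of all other pants curves very large. By Lemma~\ref{lemma:borrowing relations}, all positive cycles for $x$ supported on $M$ then have a common length, so every basic cycle on $M$ is minimizing. The scale separation prevents any cycle using a curve outside $M$ from competing, and Lemma~\ref{lemma:orient} certifies the correct orientation, giving $M_X = M$.

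The main obstacle is verifying the existence of a positive length vector satisfying every subsurface borrowing relation simultaneously. Each component of $S - M$ contributes one linear equation $\sum_i\epsilon_i\ell_i = 0$ with $\epsilon_i \in \{\pm 1\}$, and I must show these admit a common positive solution. The hypothesis $M \in \M$ forces, on each component $R$, the sign pattern along $\partial R$ to be mixed: otherwise the Borrowing lemma would allow shrinking any positive cycle on $M$ through that relation, preventing basic cycles with support meeting $\partial R$ from existing at all. Once this sign compatibility is in hand, a dimension count in Fenchel--Nielsen coordinates yields the desired positive lengths.
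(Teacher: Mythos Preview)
Your overall structure matches the paper's proof exactly: the isomorphism $\M\cong\F$ is by definition, Lemma~\ref{lemma:vertices vs multicurves} gives the inclusion of borrowing cells into $\F$, and the remaining content is the surjectivity statement that every $M\in\M$ arises as some $M_X$.  The paper handles this last point exactly as you do in~(ii), by making the curves of $M$ very short in Fenchel--Nielsen coordinates and choosing their lengths to satisfy the subsurface borrowing relations (invoking Lemma~\ref{lemma:borrowing relations}).  Your claim~(i), that $M_X\in\M$, is something the paper takes as implicit; your argument for it is fine.

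The one genuine issue is your justification of the ``main obstacle.''  You invoke the Borrowing lemma to argue that each subsurface relation must have mixed signs, but the Borrowing lemma compares lengths in a \emph{given} hyperbolic structure, and at this point in the argument you have not yet chosen one --- that is precisely what you are trying to construct.  So that step is circular as written.  Moreover, even granting mixed signs on each individual subsurface relation, this does not by itself yield a \emph{simultaneous} positive solution to all of them; the ``dimension count'' you allude to would need to be made precise.  The correct argument that no nonnegative nonzero vector lies in $\ker\bigl(\R^{|M|}\to H_1(S,\R)\bigr)$ is combinatorial/topological (for instance, a positive dependence $\sum v_i[c_i]=0$ together with the splitting lemma (Lemma~\ref{lem:split}) would contradict the disjointness of the $c_i$ or the existence of the required basic cycles), after which Stiemke's lemma produces the desired positive length vector.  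The paper, for its part, simply asserts that the lengths can be chosen ``inductively'' and does not spell this out either.
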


\begin{proof}

The equivalence of $\M$ and $\F$ was already
stated in Section~\ref{section:complex}.  Also, by
Lemma~\ref{lemma:vertices vs multicurves}, the functor $P_X \mapsto
M_X$ is an isomorphism from the category of borrowing cells to its
image in the category $\M$.  It remains to show that this functor is
surjective.

In other words, we need to produce, for a given $M \in \M$, a marked
hyperbolic surface $X \in \T(S)$ with $M_X=M$.  This is done by
choosing the lengths of the curves of $M$ to be much smaller than all
other curves in $X$, and inductively choosing the lengths of the
curves of $M$ to satisfy the subsurface borrowing relations, as in
Lemma~\ref{lemma:borrowing relations}.
\end{proof}


\subsection{Chambers}
\label{section:chambers}

To each borrowing cell (equivalently, to each element of $\M$), we now
associate a subset of $\T(S)$, which we call a ``chamber''.  We will
show that these chambers are contractible
(Proposition~\ref{prop:chamber contract}), and that the cover of
$\T(S)$ by chambers is, in a sense, dual to the cover of $\B(S)$ by
cells.  Since $\T(S)$ is contractible, it will follow that $\B(S)$ is
contractible.

Let $M \in \M$.  We define the \emph{closed
  chamber} associated to $M$ as
\[ Y_M = \{X \in \T(S) : M \subseteq M_X \}. \]

The following proposition is proved in Section~\ref{section:chambers},
and is the key technical part of the paper.

\begin{prop}
\label{prop:chamber contract}
Each closed chamber $Y_M$ is contractible.
\end{prop}

The next lemma is used in the proof of Proposition~\ref{prop:chamber
  contract}.  See the appendix of \cite{bs} for an introduction to
manifolds with corners.  For the statement, we define the \emph{open
  chamber} associated to the oriented multicurve $M$ as follows:
\[ Y_M^o = \{ X \in \T(S) : M = M_X  \}. \]
The proof of the lemma uses the Fenchel--Nielsen coordinates for
$\T(S)$ (see \cite[Section 3.2]{it}), and the following basic fact
from Teichm\"uller theory.

\begin{fact}
\label{fact:rls}
Let $X_0 \in \T(S)$, and $D>0$.  There is a neighborhood $U$ of $X_0$ so
that the set $\{c \in \S : \ell_X(c) < D \mbox{
  for some } X \in U \}$ is finite.
\end{fact}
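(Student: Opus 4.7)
The plan is to bootstrap from the classical fact that on any fixed closed hyperbolic surface only finitely many isotopy classes of essential simple closed curves have length below a given bound. The upgrade from a single surface to a neighborhood of $X_0$ in $\T(S)$ will be provided by the Lipschitz behavior of geodesic length functions with respect to the Teichm\"uller metric.

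First I would take a relatively compact neighborhood $\bar U$ of $X_0$, for instance the closed Teichm\"uller ball $\bar B(X_0, r)$ for some small $r > 0$. Wolpert's Lipschitz estimate for hyperbolic lengths then yields, for every $c \in \S$ and every $X \in \bar B(X_0, r)$, the inequality
$$\ell_{X_0}(c) \leq e^{2r}\,\ell_X(c).$$
Hence any $c \in \S$ that achieves $\ell_X(c) < D$ at some $X \in \bar U$ must satisfy $\ell_{X_0}(c) < e^{2r} D$. The set of such $c$ is therefore contained in the set of isotopy classes of essential simple closed curves of length less than $e^{2r} D$ on the fixed surface $X_0$, which is finite by the single-surface fact. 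Taking $U$ to be the interior of $\bar B(X_0, r)$ completes the argument.

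There is no substantive obstacle; the proof is a routine application of Teichm\"uller theory. The only care required is to ensure that the chosen neighborhood has finite Teichm\"uller diameter (automatic for a small ball), so that the Lipschitz constant $e^{2r}$ is finite. One could equally well avoid Wolpert and argue by direct continuity: since $X \mapsto \ell_X(c)$ is continuous and positive, it attains a positive minimum on $\bar U$, and this minimum is, by continuity in both variables on a compact set, comparable to $\ell_{X_0}(c)$, reducing to the same single-surface finiteness statement.
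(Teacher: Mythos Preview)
Your argument via Wolpert's length inequality is correct and is the standard route to this statement. Note, however, that the paper does not actually prove Fact~\ref{fact:rls}: it is introduced as a ``basic fact from Teichm\"uller theory'' and used without justification. So there is no proof in the paper to compare against; you have simply supplied one.

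One small caveat: your closing aside about avoiding Wolpert and arguing ``by direct continuity'' is not quite complete as written. Continuity of $(X,c) \mapsto \ell_X(c)$ on $\bar U \times \S$ does not by itself give a uniform comparison $\ell_{X_0}(c) \leq C\,\min_{X \in \bar U}\ell_X(c)$ over \emph{all} $c \in \S$, since $\S$ is not compact in any useful sense. One needs a statement that is uniform in $c$, and Wolpert's inequality (or an equivalent quasiconformal distortion estimate) is precisely what provides that uniformity. Your main argument already uses it, so this is only a remark about the parenthetical alternative.
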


\begin{lemma}
\label{lem:chamber with corners}
Each closed chamber $Y_M$ is a manifold with corners.  Its interior is
the open chamber $Y_M^o$.
\end{lemma}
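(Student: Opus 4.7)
Plan:

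Fix $X_0 \in Y_M$ and write $M' := M_{X_0}$, so $M \subseteq M'$. My plan is to use Fenchel--Nielsen coordinates in which lengths of $M'$-curves are coordinate functions, and to describe $Y_M$ near $X_0$ by finitely many smooth equations and smooth inequalities whose defining $1$-forms at $X_0$ give the manifold-with-corners normal form.

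By Fact~\ref{fact:rls}, I choose a neighborhood $U$ of $X_0$ and a threshold $D > |x|_{X_0}$ such that only finitely many isotopy classes of simple closed curves have $X$-length less than $D$ for some $X \in U$; these contain every possible $M_X$ for $X \in U$. A compactness-continuity argument on basic minimizing cycles (using that basic cycles on a fixed support have combinatorially determined positive coefficients) shows that, after shrinking $U$, every $X \in Y_M \cap U$ satisfies $M_X \subseteq M'$: otherwise a sequence $X_n \to X_0$ with a fixed extra curve $c \in M_{X_n} \setminus M'$ would yield a limiting minimizing cycle at $X_0$ whose support contains $c$, contradicting $c \notin M'$. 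Now extend $M'$ to a pants decomposition of $S$ and pass to the corresponding Fenchel--Nielsen coordinates. By Lemma~\ref{lemma:borrowing relations}, on $U$ the condition $M \subseteq M_X$ is equivalent to: (a) the $B(M)$ subsurface borrowing relations for $M$ hold as linear equations in the length coordinates of $M$, cutting out a smooth codimension-$B(M)$ submanifold $V \subseteq \T(S)$; and (b) $g_b := \ell_\cdot(b) - L_M \geq 0$ on $V$ for each basic cycle $b$ for $x$ supported among the finitely many relevant curves and not on $M$, where $L_M$ is the common length of basic cycles on $M$ (a smooth, linear function on $V$). Inactive inequalities at $X_0$ remain strict on a neighborhood and may be discarded.

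The active $g_b$'s at $X_0$ correspond to minimizing basic cycles on $M'$ not on $M$, i.e., to vertices of $P_{M'}$ outside the face $P_M$, which is cut out in $P_{M'}$ by the functional ``sum of coefficients on $M' \setminus M$''. Modulo $T_{X_0} Y_{M'}^o$, the tangent cone to $Y_M$ at $X_0$ is a polyhedral cone of dimension $B(M') - B(M)$ defined by the induced $\{g_b \geq 0\}$, since the defining $1$-forms of (a) together with the active $dg_b$ span the $B(M')$-dimensional space of subsurface borrowing relations of $M'$ by Lemma~\ref{lemma:cell dimension}. The main technical claim I would then need is that this cone is simplicial: there exist $B(M')-B(M)$ active $g_b$'s with linearly independent differentials such that every other active $g_b$ is a nonnegative linear combination of these. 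Granted this, the standard local normal form for smooth inequalities with independent active differentials exhibits $Y_M \cap U$ as a manifold with corners, and its interior---where all $g_b > 0$ strictly---is exactly $\{X : M_X = M\} = Y_M^o$. The main obstacle is the simplicial-cone claim, which is a convex-geometric property of the normal fan of $P_{M'}$ at the face $P_M$ and requires a combinatorial analysis specific to the polytopes of $\B(S)$; a sample computation in the pentagonal cell of Figure~\ref{pentagonpic} illustrates how the ``redundant'' vertex-inequalities $2A+B \geq 0$ and $A+2B \geq 0$ are indeed positive combinations of independent ones $A\geq 0, B\geq 0$.
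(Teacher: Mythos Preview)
Your approach is the same as the paper's: choose Fenchel--Nielsen coordinates adapted to a pants decomposition containing the relevant curves and describe $Y_M$ near a point $X_0$ by finitely many linear conditions on the length parameters. The paper's proof is far terser than yours: it extends only $M$ (not $M' = M_{X_0}$) to a pants decomposition, invokes Fact~\ref{fact:rls} to get finitely many inequalities, asserts that ``the solution set of a system of linear inequalities in $\R^n$ is a manifold with corners,'' and identifies the interior with the locus of strict inequalities. Your more careful setup---extending all of $M'$ to a pants decomposition so that every competing basic-cycle length is genuinely linear in the coordinates, and separating the borrowing \emph{equalities} for $M$ (your (a)) from the comparison \emph{inequalities} against basic cycles not supported in $M$ (your (b))---is exactly the right way to make this sketch precise.

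The obstacle you isolate, that the active inequalities must cut out a \emph{simplicial} cone, is real, and the paper does not address it: the blanket assertion that any solution set of linear inequalities is a manifold with corners (in the strict Borel--Serre sense) is false in general. Your reformulation---that the normal cone of $P_{M'}$ along the face $P_M$ be simplicial, equivalently that $P_M$ be a simple face of $P_{M'}$---is correct, and this is precisely what would need to be verified. So the paper glosses over the very point you were unable to prove, rather than handling it by some different argument.

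For the sole application, Step~1 of the proof of Theorem~\ref{main:b}, what is actually used is that each $Y_M$ is a contractible absolute neighborhood retract; your analysis already exhibits $Y_M$ locally as a convex polyhedron, hence an ANR, independent of the simplicial-cone claim, and the paper itself notes in Step~2 of that proof that ANR is the operative property. So the main results are unaffected.
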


\begin{proof}

Let $X \in Y_M$.  Choose a pants decomposition for $S$ containing
the curves of $M$, and consider the resulting Fenchel--Nielsen
coordinates on $\T(S)$.  By Fact~\ref{fact:rls}, there is a finite set of linear
inequalities among the length coordinates, so that, in a neighborhood
$U$ of $X$, a point is in $Y_M$ if and only if those inequalities
are satisfied.  Since the solution set of a system of linear
inequalities in $\R^n$ is a manifold with corners, and
Fenchel--Nielsen coordinates form a smooth chart for $\T(S)$, it
follows that $Y_M$ is a manifold with corners.

In the previous paragraph, the locus of points in $\R^n$ where all of the inequalities are strict is exactly where $M$ is the minimizing multicurve; but this is the definition of $Y_M^o$.
\end{proof}

We now relate the partial orderings on multicurves and closed chambers.

\begin{lemma}
\label{lem:face relations}
$Y_M \supseteq Y_{M'}$ if and only if $M \subseteq M'$.
\end{lemma}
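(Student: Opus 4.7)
The plan is to prove each implication separately, with the easy direction following directly from the definition of $Y_M$ and the harder direction reducing immediately to the surjectivity part of Lemma~\ref{lemma:covariant}.

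For the ($\Leftarrow$) direction, suppose $M \subseteq M'$ and take any $X \in Y_{M'}$. By definition of $Y_{M'}$, we have $M' \subseteq M_X$, hence $M \subseteq M' \subseteq M_X$, so $X \in Y_M$. This is immediate from the definitions and requires no further input.

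For the ($\Rightarrow$) direction, suppose $Y_M \supseteq Y_{M'}$. I would invoke the surjectivity assertion established in the proof of Lemma~\ref{lemma:covariant}: for every $M' \in \M$ there exists $X' \in \T(S)$ with $M_{X'} = M'$. Since $M' \subseteq M_{X'}$ trivially, such an $X'$ lies in $Y_{M'}$, hence by hypothesis in $Y_M$. Unwinding the definition of $Y_M$ yields $M \subseteq M_{X'} = M'$, as desired.

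There is no real obstacle here: the nontrivial content has already been absorbed into Lemma~\ref{lemma:covariant}, whose surjectivity statement is precisely the claim that any oriented multicurve in $\M$ arises as the minimizing multicurve of some marked hyperbolic surface (produced by choosing Fenchel--Nielsen length parameters on $M$ that satisfy the relevant subsurface borrowing relations and are much smaller than the lengths of all other curves, so that the candidate positive cycles supported on $M$ beat every other cycle representing $x$). Given that lemma, the present statement is essentially a formal consequence of the definition of $Y_M$ together with the existence of a chamber-interior point for every $M' \in \M$.
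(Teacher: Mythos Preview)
Your proof is correct and follows the same line as the paper's: the paper's argument also reduces the nontrivial direction to the existence, for each $M' \in \M$, of some $X$ with $M_X = M'$. The paper simply asserts this equivalence without citing Lemma~\ref{lemma:covariant}, whereas you make the reference explicit, which is arguably an improvement in clarity.
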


\begin{proof}

By the definition of closed chambers, the statement $Y_M \supseteq
Y_{M'}$ is equivalent to the statement that, for any $X \in \T(S)$, if
$M_X \supseteq M'$, then $M_X \supseteq M$.  The latter is equivalent
to the inclusion $M \subseteq M'$.
\end{proof}

Given two multicurves $M$ and $M'$ with no transverse intersections,
the multicurve $M \cup M'$ is defined by taking the union of the two
multicurves and replacing each pair of isotopic curves with a single
curve.

\begin{cor}
\label{cor:chamber int}
The intersection of two closed chambers $Y_M$ and $Y_{M'}$ is nonempty
if and only if the multicurve $M \cup M'$ is defined and has an
orientation compatible with those of $M$ and $M'$.  In this case, $Y_M
\cap Y_{M'} = Y_{M \cup M'}$.
\end{cor}

From the above corollary we can deduce that the category of closed
chambers (under inclusion), and the opposite of the category $\M$ are
isomorphic.  We will not use this fact directly, but the
contravariance belies the main difficulty in the proof of
Theorem~\ref{main:b} in Section~\ref{section:contractible}.

\begin{cor}
\label{cor:loc fin}
The nerve of the cover of $\T(S)$ by the closed chambers $\{Y_M\}$ is finite dimensional.
\end{cor}

\begin{proof}

Suppose $\{Y_{M_i}\}$ is a collection of closed chambers with nontrivial intersection.  By Corollary~\ref{cor:chamber int}, this intersection is a closed chamber $Y_M$, and each $M_i$ is a submulticurve of $M$.  Now, $M$ consists of at most $3g-3$ curves, and so the set $\{Y_{M_i}\}$ contains at most $2^{3g-3}$ elements.  This gives an upper bound for the dimension of the nerve.
\end{proof}


\subsection{The complex of cycles is contractible}
\label{section:contractible}

In this section, we prove Theorem~\ref{main:b} by making a comparison
between the cover of $\B(S)$ by cells and the cover of $\T(S)$ by
chambers.  Roughly, since both chambers and cells of $\B(S)$ are
contractible, and since the gluing patterns are the same (up to
contravariance), it will follow that $\B(S)$ is homotopy equivalent to
$\T(S)$, and so $\B(S)$ is contractible.

\begin{proof}[Proof of Theorem~\ref{main:b}]

Let $\B'(S)$ denote the barycentric subdivision of $\B(S)$; it suffices to show that $\B'(S)$ is contractible.  For a vertex of $\B(S)$ corresponding to a multicurve $M$, denote by $C_M$ the star of the corresponding vertex in $\B'(S)$.

The proof is in three steps.
\begin{enumerate}
\item The geometric realization of the nerve of the cover of $\T(S)$ by closed chambers is homotopy equivalent to $\T(S)$.
\item The geometric realization of the nerve of the cover of $\B'(S)$ by the $\{C_M\}$ is homotopy equivalent to $\B'(S)$.
\item The two nerves are the same.
\end{enumerate}

\emph{Step 1.} It is a special case of a result of Borel--Serre that if a
topological space is covered by a set of closed, contractible
manifolds with corners, and the dimension of the nerve of this cover
is finite, then the nerve is homotopy equivalent to the original space
\cite[Theorems 8.2.1, 8.3.1]{bs}.  Thus, it suffices to apply Proposition~\ref{prop:chamber contract},
Lemma~\ref{lem:chamber with corners}, and Corollary~\ref{cor:loc
fin}.

\emph{Step 2.} We apply the same theorem of Borel--Serre.  In general,
the closed star of a vertex of a simplicial complex is contractible
($\B(S)$ is not simplicial, but its barycentric subdivision is).  In
the category of simplicial complexes, the phrase ``closed manifold with
corners'' used in Step 1 can be replaced with ``subcomplex'' (both types of
spaces are examples of absolute neighborhood retracts).  That the dimension of the nerve is
finite follows from Step 3 (which is independent of the other steps).
Thus, their theorem applies.

\emph{Step 3.} A collection $\{C_{M_i}\}$ has nonempty intersection if
and only if the vertices $M_i$ are vertices of a common cell of
$\B(S)$.  By Lemma~\ref{lemma:covariant}, this is equivalent to the
condition that the $M_i$ are vertices of some common borrowing cell,
that is, if there is a hyperbolic structure $X$ with $M_i \subseteq
M_X$ for all $i$.  But this is the same as the condition that the
intersection of the $Y_{M_i}$ is nonempty (Corollary~\ref{cor:chamber
int}).

Combining Steps 1, 2, and 3, plus the fact that $\T(S)$ contractible,
we see that $\B'(S)$, and hence $\B(S)$, is contractible.
\end{proof}


\section{Chambers are contractible}
\label{section:chambers}

In this section we prove Proposition~\ref{prop:chamber contract},
which says that each closed chamber $Y_M$ is contractible.  We take
$x$ to be the element of $H_1(S,\Z)$ chosen in Section~\ref{section:complex}.
As in Section~\ref{section:borrowing complex}, $Y_M$ is the closed chamber
associated to some multicurve $M$ whose isotopy class is in $\M$.

\subsection{The thick part of Teichm\"uller space is contractible}

The goal is to reduce Proposition~\ref{prop:chamber contract} to the
following theorem of Ivanov \cite[Theorem 3]{nvi3}.  The
\emph{$\epsilon$-thick part} of Teichm\"uller space $\T_\epsilon(S)$
is the subset consisting of hyperbolic surfaces where every
nontrivial closed geodesic that is not a boundary component has length
at least $\epsilon$ (in the case of a surface with boundary, we define
Teichm\"uller space as the space of marked hyperbolic surfaces where
each boundary is a geodesic of fixed length).

Also, we make use of the Margulis constant $\epsilon_{\hyp^2}$ for the
hyperbolic plane.  This number has the property that, in any
hyperbolic surface, two nonisotopic curves of length less than $\epsilon_{\hyp^2}$ are disjoint.

\begin{theorem}
\label{thm:thick contract}
Let $S$ be any compact surface and let $\epsilon \in (0,\epsilon_{\hyp^2})$.  The space $\T_\epsilon(S)$ is contractible.
\end{theorem}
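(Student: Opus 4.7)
The plan is to build an explicit deformation retraction $\T(S)\to\T_\epsilon(S)$ from a flow that continuously lengthens every geodesic shorter than $\epsilon$, fixing the thick part pointwise. Once such a retraction exists, contractibility of $\T_\epsilon(S)$ follows from contractibility of $\T(S)$.

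I would first observe that by the Margulis lemma and the hypothesis $\epsilon<\epsilon_{\hyp^2}$, for each $X\in\T(S)$ the set
\[
M(X)=\{\gamma\in\S:\ell_\gamma(X)<\epsilon\}
\]
is pairwise disjoint and hence forms a multicurve, and by Fact~\ref{fact:rls} the assignment $X\mapsto M(X)$ is locally finite. I would then pick a smooth non-increasing cutoff $\phi:[0,\infty)\to[0,1]$ equal to $1$ on $[0,\epsilon/2]$ and $0$ on $[\epsilon,\infty)$, and for each isotopy class $\gamma$ a smooth vector field $\xi_\gamma$ on $\T(S)$ with $\xi_\gamma(\ell_\gamma)>0$ and $\xi_\gamma(\ell_{\gamma'})=0$ whenever $\gamma'$ is disjoint from $\gamma$ (locally, take $\partial/\partial\ell_\gamma$ in Fenchel--Nielsen coordinates extending $\gamma$ to a pants decomposition). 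The vector field
\[
V(X)=\sum_{\gamma\in\S}\phi(\ell_\gamma(X))\,\xi_\gamma(X)
\]
is then smooth by Fact~\ref{fact:rls} and vanishes identically on $\T_\epsilon(S)$.

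Next I would analyze the flow $\Phi_t$ of $V$. By construction, each short $\ell_\gamma$ strictly increases along orbits while each length $\ell_{\gamma'}\ge\epsilon$ is preserved, so every orbit enters $\T_\epsilon(S)$ in some finite time $T(X)$ and is stationary thereafter. A standard time-reparameterization using local boundedness of $T$ converts $\Phi$ into a continuous map $H:\T(S)\times[0,1]\to\T(S)$ with $H_0=\mathrm{id}$, $H_1(\T(S))\subseteq\T_\epsilon(S)$, and $H_t|_{\T_\epsilon(S)}=\mathrm{id}$, completing the deformation retraction.

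The main obstacle is producing the vector fields $\xi_\gamma$ globally and continuously in $X$ while maintaining the disjointness property $\xi_\gamma(\ell_{\gamma'})=0$ for $\gamma'$ disjoint from $\gamma$; locally this is easy in any Fenchel--Nielsen chart containing $\gamma$, but one must patch such local choices consistently. The quantitative collar lemma is the essential tool here: wherever $\gamma$ is short enough for $\phi(\ell_\gamma)$ to be nonzero, its Margulis collar is large, and every other simple geodesic either crosses $\gamma$ (making the disjointness condition vacuous) or lies well outside the collar, so the local chart depends smoothly on $X$. The same collar estimates yield a uniform lower bound on $\xi_\gamma(\ell_\gamma)$ on $\{\ell_\gamma<\epsilon\}$, which in turn gives the local boundedness of $T(X)$ needed for the time-reparameterization step.
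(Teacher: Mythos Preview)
The paper does not prove Theorem~\ref{thm:thick contract}; it quotes it from Ivanov \cite[Theorem 3]{nvi3} and uses it as a black box. (The paper does, however, run a closely related flow argument in the proof of Lemma~\ref{lem:flow1}, explicitly modelling it on Ivanov's proof.) So your proposal is really a sketch of Ivanov's argument, and should be judged against that.

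There is a genuine gap in your construction. You ask for, for each isotopy class $\gamma$, a vector field $\xi_\gamma$ on $\T(S)$ with $\xi_\gamma(\ell_\gamma)>0$ and $\xi_\gamma(\ell_{\gamma'})=0$ for \emph{every} $\gamma'$ disjoint from $\gamma$, and you suggest $\partial/\partial\ell_\gamma$ in a Fenchel--Nielsen chart for a pants decomposition $P\ni\gamma$. But $\partial/\partial\ell_\gamma$ only annihilates the lengths of the \emph{other pants curves of $P$}, not of arbitrary curves disjoint from $\gamma$: changing $\ell_\gamma$ changes the boundary lengths of the pairs of pants adjacent to $\gamma$, hence the geometry of $S-\gamma$, hence the length of a generic curve $\gamma'\subset S-\gamma$ that is not in $P$. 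Worse, no vector field with your stated properties exists at all: the collection of length functions $\{\ell_{\gamma'}:\gamma'\subset S-\gamma\}$ determines the hyperbolic structure on $S-\gamma$ together with its boundary lengths, and those boundary lengths are $\ell_\gamma$; so a vector field annihilating every such $\ell_{\gamma'}$ must also annihilate $\ell_\gamma$. Your claim that ``each length $\ell_{\gamma'}\ge\epsilon$ is preserved'' therefore fails, and with it the argument that no new short curves are created.

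The correct construction (Ivanov's, and the one the paper mimics in Lemma~\ref{lem:flow1}) does not build a separate $\xi_\gamma$ for each curve. Instead, at each $X$ one chooses a pants decomposition $P$ containing \emph{all} curves of length $\le\epsilon$ simultaneously (possible by Margulis), defines the flow locally in the Fenchel--Nielsen chart for $P$ so that the short lengths increase and the other coordinates of $P$ are fixed, and then patches these local vector fields with a partition of unity over the open sets $U_P$. The crucial property---that a curve whose length is about to cross $\epsilon$ has its length nondecreasing---then holds because any such curve has length below $\epsilon_{\hyp^2}$, is therefore disjoint from every short curve, and can be taken to lie in $P$; so it is one of the frozen coordinates. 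This is exactly the mechanism in Step~2 and Step~3(1) of the proof of Lemma~\ref{lem:flow1}.
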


For any $\epsilon \in (0,\epsilon_{\hyp^2})$ and $p \in \R_+^{|M|}$,
we define the subset $\A_{\epsilon,p}$ of $\T(S)$ to be the set of
points with the following properties.
\begin{enumerate}
\item Each curve in $S$ that is disjoint from the curves of $M$ and
  is not isotopic to a curve of $M$ has length at least $\epsilon$.  
\item The lengths of the curves of $M$ are given by $p$.
\end{enumerate}

\begin{lemma}
\label{lemma:choose}
For any $\epsilon < \epsilon_{\hyp^2}$, there is a choice of $p$ so that $\A_{\epsilon,p}$ is a subset of the open chamber $Y_M^o$.
\end{lemma}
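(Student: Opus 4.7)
The plan is to choose the length vector $p$ so that, at every $X \in \A_{\epsilon,p}$, the multicurve $M$ is exactly the minimizing multicurve $M_X$. I would produce $p$ in two stages. First, reusing the surjectivity construction from the proof of Lemma~\ref{lemma:covariant}, I would pick a positive vector $q \in \R_+^{|M|}$ whose coordinates satisfy every subsurface borrowing relation determined by $M$; this ensures, via Lemma~\ref{lemma:borrowing relations}, that at any $X$ where the curves of $M$ have lengths $q$, all positive cycles for $x$ supported in $M$ share a common length. Second, I would rescale to $p = \lambda q$ with $\lambda$ small enough that this common length, call it $L(p)$, is strictly less than $\epsilon$; only finitely many basic cycles contribute, so scaling is unobstructed.

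Next I would invoke the Margulis constant to control curves outside $M$. Fix $X \in \A_{\epsilon,p}$. Since each curve of $M$ has length below $\epsilon < \epsilon_{\hyp^2}$, any simple closed geodesic $c_0$ in $X$ not isotopic to a curve of $M$ must either be disjoint from $M$ or satisfy $\ell_X(c_0) \geq \epsilon_{\hyp^2}$; condition (1) in the definition of $\A_{\epsilon,p}$ covers the disjoint case, so in either case $\ell_X(c_0) \geq \epsilon$.

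Now I would argue $M_X = M$. For $M \subseteq M_X$: every basic cycle for $x$ supported in $M$ is integral (basic cycles representing integral classes are integral), has length $L(p) < \epsilon$, and any integer cycle for $x$ using a curve outside $M$ has length at least $1 \cdot \epsilon > L(p)$, so the basic cycles in $M$ realize $|x|_{\Z}$; by the definition of $\M$, their supports cover $M$. For $M_X \subseteq M$: if some $c_0 \in M_X$ were not isotopic to a curve of $M$, then by Lemma~\ref{lemma:vertices vs multicurves}, $c_0$ would lie in the support of a vertex of $P_X$, i.e.~a basic cycle for $x$; this basic cycle being integral, the coefficient of $c_0$ is at least $1$, forcing total length at least $\epsilon$, which contradicts minimality $L(p) < \epsilon$.

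The main obstacle I anticipate is precisely this second inclusion. The definition of $M_X$ uses \emph{real} minimizing cycles, and a stray curve $c_0 \notin M$ could a priori appear with a tiny positive coefficient that scaling alone cannot rule out. The decisive tool is Lemma~\ref{lemma:vertices vs multicurves}: membership $c_0 \in M_X$ forces $c_0$ into the support of a vertex of $P_X$, and vertex integrality is exactly what makes the Margulis-based length lower bound bite.
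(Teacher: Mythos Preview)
Your proposal is correct and follows essentially the same approach as the paper: choose $p$ so that all positive cycles for $x$ supported in $M$ share a common length $L(p)<\epsilon$, then use the Margulis constant together with condition~(1) in the definition of $\A_{\epsilon,p}$ to force every curve outside $M$ to have length at least $\epsilon$, so that no basic minimizing cycle can involve such a curve. Your explicit appeal to Lemma~\ref{lemma:vertices vs multicurves} to pass from real minimizing cycles to integral basic cycles makes transparent a step the paper leaves implicit; otherwise the two arguments are the same.
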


\begin{proof}

Let $\epsilon < \epsilon_{\hyp^2}$ be given.  We choose $p \in
\R_+^{|M|}$ so that every positive cycle for $x$ supported (with
orientation) in $M$ has the same length less than $\epsilon$ (take any
point in $Y_M^o$, and shrink the curves of $M$ by the same factor until this
condition holds).

In $\A_{\epsilon,p}$, a basic minimizing cycle for $x$ cannot have
support on any curve which is disjoint from (and not isotopic to) $M$,
since all such curves have length at least $\epsilon$ (by the
definition of $\A_{\epsilon,p}$).  Also, no geodesic intersecting the
geodesic representative of $M$ transversely can be in the support of a
basic minimizing cycle, since all such curves have length greater than
$\epsilon$ by the hypothesis on $\epsilon$.  Thus, $\A_{\epsilon,p}
\subset Y_M^o$.
\end{proof}

\begin{lemma}
\label{lemma:a ep}
For any $\epsilon < \epsilon_{\hyp^2}$ and any $p$, the space $\A_{\epsilon,p}$ is contractible.
\end{lemma}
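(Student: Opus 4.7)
The plan is to identify $\A_{\epsilon,p}$ explicitly with a product of contractible spaces via a Fenchel--Nielsen-type decomposition along $M$, and then invoke Theorem~\ref{thm:thick contract} on each piece.

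First I would cut $S$ along $M$ to obtain a disjoint union of compact subsurfaces $S_1,\dots,S_k$, each with geodesic boundary (two boundary circles per curve of $M$). Because the components of $M$ are essential and pairwise non-isotopic in the closed surface $S$, no $S_i$ is a disc or an annulus, so every $S_i$ has $\chi(S_i)<0$ and admits a Teichm\"uller space of the kind described just before Theorem~\ref{thm:thick contract}.

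The next, and main, step is to establish a homeomorphism
\[
\A_{\epsilon,p} \;\cong\; \R^{|M|} \,\times\, \prod_{i=1}^{k} \T_\epsilon(S_i),
\]
where the boundary length of each component of $\partial S_i$ is the value prescribed by $p$, and the $\R^{|M|}$-factor records the Fenchel--Nielsen twist along each curve of $M$. To produce this identification, given $X \in \A_{\epsilon,p}$ the curves of $M$ are realized as geodesics of the lengths prescribed by $p$; cutting along them endows each $S_i$ with a marked hyperbolic structure with prescribed boundary lengths, and the way the pieces are glued back together is recorded by the twist parameters. Conversely, such data assembles into a unique marked hyperbolic structure on $S$ with $M$ geodesic of the correct lengths. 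The key observation is that essential simple closed curves in $S$ disjoint from $M$ and not isotopic into $M$ are in bijection with non-peripheral simple closed curves in the various $S_i$, and their hyperbolic lengths agree under this correspondence; hence condition~(1) in the definition of $\A_{\epsilon,p}$ translates \emph{exactly} into the requirement that each $S_i$ lie in $\T_\epsilon(S_i)$, while condition~(2) is absorbed into the prescribed boundary lengths.

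Given the product description, the conclusion is immediate: Theorem~\ref{thm:thick contract} applied to each $S_i$ (valid since $\epsilon < \epsilon_{\hyp^2}$ and $\chi(S_i)<0$) shows that each $\T_\epsilon(S_i)$ is contractible, and $\R^{|M|}$ is trivially contractible, so $\A_{\epsilon,p}$ is contractible as a product of contractible spaces. The main obstacle is the bookkeeping in the middle step: verifying the bijection between curves in $S$ excluded by condition~(1) and non-peripheral curves in the pieces $S_i$, and confirming that no $S_i$ is degenerate so that Ivanov's theorem is actually applicable to each factor.
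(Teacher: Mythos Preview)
Your proposal is correct and follows essentially the same argument as the paper: cut along the geodesic representative of $M$, identify $\A_{\epsilon,p}$ with $\R^{|M|}\times\prod_i \T_\epsilon(S_i)$ via twist parameters and induced marked hyperbolic structures on the pieces, and then apply Theorem~\ref{thm:thick contract} to each factor. Your write-up is in fact a bit more explicit than the paper's in justifying why each $S_i$ has negative Euler characteristic and why condition~(1) matches the thick-part condition on the pieces.
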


\begin{proof}

Take a pants decomposition $P$ of $S$ containing $M$, and consider the
associated Fenchel--Nielsen coordinates.

Let $\{S_i\}$ be the set of connected components of $S-M$.  There is a
natural map
\[ \A_{\epsilon,p} \to \T_\epsilon(S_i) \]
for each $i$, since each $S_i$ has an induced marking and an induced hyperbolic structure (we make sure to cut along the geodesic representative of $M$).

We also consider the $|M|$ maps
\[ \A_{\epsilon,p} \to \R \]
obtained by taking the twist parameters at the curves of $M$.

Combining these maps, we get a bijective map
\[ \A_{\epsilon,p} \to \prod \T_\epsilon(S_i) \times \R^{|M|}.\]
This map is a homeomorphism---there is a continuous inverse, obtained by gluing the pieces back together according to the prescribed twist parameters.
Thus, applying Theorem~\ref{thm:thick contract}, we see that
$\A_{\epsilon,p}$ is contractible.
\end{proof}

\subsection{Chamber flow}

The goal of this section is to prove the following technical
statement, which, combined with Lemmas~\ref{lemma:choose}
and~\ref{lemma:a ep}, completes the proof that closed chambers are
contractible (Proposition~\ref{prop:chamber contract}).

\begin{lemma}
\label{lem:flow1}
Given any compact subset $K$ of the closed chamber $Y_M$, there is a
choice of $\A_{\epsilon,p} \subset Y_M^o$, and a deformation of a
subset of $Y_M$ which takes $K$ into $\A_{\epsilon,p}$.
\end{lemma}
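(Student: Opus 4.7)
To prove Lemma~\ref{lem:flow1}, I plan to work in Fenchel--Nielsen coordinates $(\ell_c,\tau_c)_{c\in P}$ on $\T(S)$ for a pants decomposition $P$ that contains $M$, and to define the deformation by linearly shrinking the length coordinates of the $M$-curves from their $K$-values down to the prescribed target $p$, leaving every other Fenchel--Nielsen coordinate fixed. First the parameters $\epsilon$ and $p$ must be selected. By Fact~\ref{fact:rls} applied to a finite subcover of $K$, for any bound $D$ only finitely many isotopy classes of curves attain length at most $D$ at some point of $K$; in particular those curves disjoint from $M$ and not isotopic to $M$ in this finite list have lengths on $K$ uniformly bounded below by some $\epsilon_0>0$. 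I then pick $\epsilon<\min\{\epsilon_0,\epsilon_{\hyp^2}\}$ (and small enough for the estimate below), and via Lemma~\ref{lemma:choose} select $p\in\R_+^{|M|}$ satisfying the subsurface borrowing relations for $M$ with $\sum_c p_c<\epsilon$, so that $\A_{\epsilon,p}\subseteq Y_M^o$.

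For $X$ in a suitable neighborhood $U$ of $K$ in $Y_M$ and $t\in[0,1]$, I define
\[
\ell_c(h_t(X)) = (1-t)\ell_c(X) + t\,p_c \qquad (c\in M),
\]
keeping all other Fenchel--Nielsen coordinates of $X$ unchanged; thus $h_0=\mathrm{id}$ and $h_1(X)$ has $\ell_M=p$. The verification that $h_1(X)\in\A_{\epsilon,p}$ will be a continuity-and-compactness argument: only the coordinates $\ell_M$ change along the deformation path; curves in $P\setminus M$ are untouched; curves disjoint from $M$ but not in $P$ depend continuously on $\ell_M$ alone, and Fact~\ref{fact:rls} applied to the compact image of the deformation provides a uniform positive lower bound on their lengths, which, after shrinking $U$ or $\epsilon$, exceeds $\epsilon$.

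The hard part will be showing that $h_t(X)\in Y_M$ for every $t$, that is, $M\subseteq M_{h_t(X)}$. The key observation is that, because $X\in Y_M$ implies $M\subseteq M_X$, Lemma~\ref{lemma:borrowing relations} (together with the fact that the subsurface borrowing relations for $M$ are integer combinations of those for the finer $M_X$) already gives the subsurface borrowing relations for $M$ as equalities of lengths at $X$. Since $p$ was chosen to satisfy those relations, the linear interpolation preserves them at every $t$. The Borrowing Lemma then forces all positive cycles for $x$ supported in $M$ at $h_t(X)$ to share one common length $L(t)$, varying linearly from $|x|_\Z(X)$ down to $\sum_c p_c$. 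The collar lemma guarantees that any curve crossing $M$ only grows longer as $\ell_M$ shrinks, and by continuity every curve disjoint from $M$ stays uniformly long along the deformation; so every cycle for $x$ using a non-$M$ curve has length at least $L(t)$. Thus the $M$-supported cycles remain globally minimizing throughout, and since $M\in\M$ each $m\in M$ appears in at least one such basic cycle, giving $m\in M_{h_t(X)}$ as desired.
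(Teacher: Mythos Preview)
Your argument has a genuine gap in the step where you claim that $h_t(X)\in Y_M$ for all $t\in[0,1]$. The problem is with competing cycles that use curves disjoint from $M$ but \emph{not} lying in your fixed pants decomposition $P$.

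Take $X$ on the boundary of $Y_M$, so that $M_X\supsetneq M$, and let $c\in M_X\setminus M$. There is a basic minimizing cycle $\gamma$ for $x$ using $c$, with $\ell_X(\gamma)=L(0)$ exactly. Since $c\in M_X$, it is disjoint from $M$, so your collar-lemma clause does not apply to it. Since $c$ need not lie in $P$, its length is \emph{not} a frozen Fenchel--Nielsen coordinate; rather $\ell_{h_t(X)}(c)$ varies as a real-analytic function of the $\ell_M$-coordinates, with no a~priori monotonicity. Your assertion that such curves ``stay uniformly long'' gives only a positive lower bound; it does not compare $\ell_{h_t(X)}(\gamma)$ to $L(t)$. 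At $t=0$ the two quantities are equal, and nothing in your argument rules out $\frac{d}{dt}\big|_{t=0}\ell_{h_t(X)}(\gamma)<\frac{d}{dt}\big|_{t=0}L(t)$, which would force $\ell_{h_t(X)}(\gamma)<L(t)$ for small $t>0$ and hence $h_t(X)\notin Y_M$. The sentence ``so every cycle for $x$ using a non-$M$ curve has length at least $L(t)$'' is thus a non sequitur.

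This is exactly why the paper does not use a single global pants decomposition. In the paper's construction, the local vector field $\V_P$ is only used on the open set $U_P$, whose first defining condition requires that \emph{every} curve of $M_X$ (not just of $M$) lie in $P$. Consequently each competing curve $c\in M_X\setminus M$ is a Fenchel--Nielsen length coordinate for $P$ and is held \emph{exactly constant} by $\V_P$; this is what makes the flow point into $Y_M^o$ along the boundary. After averaging by the partition of unity this property persists, because any two $U_P$, $U_{P'}$ meeting at a boundary point both contain the relevant $M_X$. Your single $P$ cannot achieve this: the union $\bigcup_{X\in K}M_X$ is finite (by Fact~\ref{fact:rls}) but need not be a multicurve, so its curves cannot all be placed in one pants decomposition. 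The locally varying $P$ and the partition of unity are not incidental complications; they are the mechanism that controls competing cycles.
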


For the proof, we need two lemmas from Teichm\"uller theory.  For a
pants decomposition $P$ of a surface $S=S_g$, let $FN_P$ be the map
$\R^{6g-6} \to \T(S)$ corresponding to the Fenchel--Nielsen
coordinates adapted to $P$, and let $FN_P^\star$ be the differential.
In the statement of the lemma, we endow $\R^{6g-6}$ with the Euclidean
metric, and $\T(S)$ with the Teichm\"uller metric.

\begin{lemma}
\label{lem:lipschitz}
Let $S=S_g$ and $\Dmin, \Dmax \in \R_+$.  There is a constant
$C=C(S,\Dmin,\Dmax)$ so that if $P$ is any pants decomposition of $S$, and
\[ W = \{X \in \T(S) : \Dmin \leq \ell_X(c) \leq \Dmax \mbox{ for all } c \in P\}, \]
then $FN_P^\star$ is $C$-Lipschitz on $FN_P^{-1}(W)$.
\end{lemma}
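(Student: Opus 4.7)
The plan is to reduce the uniformity in $P$ to a compactness argument by combining the mapping class group action on pants decompositions with the Dehn twist action on Teichm\"uller space.

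First I would use the classical fact that $\Mod(S_g)$ acts on the set of pants decompositions of $S_g$ (equipped with the auxiliary data needed to pin down a Fenchel--Nielsen chart, namely an ordering of the cuffs and a choice of seams) with only finitely many orbits; this follows from the finiteness of topological types of trivalent graphs on $2g-2$ vertices. Pick a representative $P_1,\ldots,P_k$ from each orbit. For any other pants decomposition $P' = \phi(P_j)$ with $\phi \in \Mod(S_g)$, the two Fenchel--Nielsen charts are related by $FN_{P'} = \phi \circ FN_{P_j}$, so $FN_{P'}^\star|_p = \phi_\star \circ FN_{P_j}^\star|_p$. Because the Teichm\"uller metric is $\Mod(S_g)$-invariant, $\phi$ is an isometry of $\T(S_g)$, and hence $\phi_\star$ preserves the Finsler norm on every tangent space. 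Consequently the operator norms of $FN_{P'}^\star$ and $FN_{P_j}^\star$ agree at every point, and it suffices to produce a uniform bound separately for each of the finitely many $P_j$.

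Now fix $P = P_j$. The region $FN_P^{-1}(W)$ is the convex set $[\Dmin,\Dmax]^{3g-3} \times \R^{3g-3}$, which is unbounded only in the twist coordinates. The Dehn twists along the cuffs of $P$ generate a subgroup $\Z^{3g-3}$ of $\Mod(S_g)$ acting on $\T(S_g)$ by isometries; in Fenchel--Nielsen coordinates this action fixes the length parameters and shifts each $\tau_i$ by an integer multiple of $\ell_i$. A fundamental domain is therefore the compact set $[\Dmin,\Dmax]^{3g-3} \times \prod_i [0,\ell_i]$. On this compact set the function $p \mapsto \|FN_P^\star|_p\|$ (operator norm from the Euclidean norm on $\R^{6g-6}$ to the Teichm\"uller Finsler norm on $T\T(S_g)$) is continuous, since the infinitesimal Teichm\"uller norm is jointly continuous in the base point and in the tangent vector. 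It therefore attains a finite maximum $C_{P_j}$, and the isometry invariance under Dehn twists propagates the same bound to all of $FN_P^{-1}(W)$. Setting $C = \max_j C_{P_j}$ gives the desired constant, and convexity of $FN_P^{-1}(W)$ upgrades the pointwise operator-norm bound to the stated Lipschitz bound on the map itself.

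The main obstacle is the bookkeeping step in the first paragraph: one must be precise about the auxiliary data (ordering, seams, and basepoints of twist parameters) attached to a pants decomposition and verify that under pushforward by $\phi \in \Mod(S_g)$ the two Fenchel--Nielsen charts truly intertwine as $FN_{\phi(P_j)} = \phi \circ FN_{P_j}$ rather than differing by an additive constant in the twists. Once this identification is carried out carefully, the rest is pure compactness and continuity of the Teichm\"uller Finsler norm.
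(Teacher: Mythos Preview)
Your approach is the paper's: use the $\Mod(S)$-action to reduce to finitely many topological types of pants decompositions, and for each representative use the cocompact action of the Dehn-twist subgroup on the strip to reduce to a compact set where continuity of the Teichm\"uller Finsler norm gives the bound. The order of the two reductions is swapped, but the content is identical.

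One technical slip is worth flagging. With the normalization you chose, a Dehn twist about $c_i$ acts on $\R^{6g-6}$ by $(\ell,\tau)\mapsto(\ell,\tau+\ell_i e_i)$; this is a \emph{shear}, not a Euclidean isometry, so the operator norm $\|FN_P^\star|_p\|$ (Euclidean to Teichm\"uller) is not obviously invariant along orbits, and your ``isometry invariance propagates the bound'' step does not go through as written. The paper avoids this by using the angle normalization of the twist parameter, in which a Dehn twist is translation by $2\pi$; then the $\Z^{3g-3}$-action on $\R^{6g-6}$ is by honest Euclidean translations, the quotient is the compact product $[\Dmin,\Dmax]^{3g-3}\times(S^1)^{3g-3}$, and invariance of the operator norm is immediate. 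With that adjustment your argument is complete, and your worry about the intertwining $FN_{\phi(P)}=\phi\circ FN_P$ is handled in the paper exactly as you suggest (choose compatible auxiliary data so that the charts are literally conjugate by $\phi$).
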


\begin{proof}

Let $P$ be given.  The subset $FN_P^{-1}(W)$ of $\R^{6g-6}$ is the
infinite ``strip''
\[ [\Dmin,\Dmax]^{3g-3} \times \R^{3g-3}, \]
that is, all length parameters are in $[\Dmin,\Dmax]$, and twist parameters are unconstrained.  The free abelian group $G(P)$ generated by Dehn twists in the curves
of $P$ acts on $W$, since $G(P)$ preserves the lengths of the curves
of $P$.  Moreover, this action is by isometries (in general, $\Mod(S)$
acts on $\T(S)$ by isometries).

In the Fenchel--Nielsen coordinates, the Dehn twist about a curve of $P$ is translation by $2\pi$ in the corresponding twist coordinate.  Therefore, the quotient of $FN_P^{-1}(W)$ by $G(P)$ is 
\[ [\Dmin,\Dmax]^{3g-3} \times (S^1)^{3g-3}. \]
Again, this action is by isometries, so the stretch factor of
$FN_P^\star$ is determined on this quotient.  As this quotient is
compact, and $FN_P$ is smooth, it follows that $FN_P^\star$ is
Lipschitz on $FN_P^{-1}(W)$.

To get the Lipschitz constant $C$ which only depends on the surface
$S$ and the constants $\Dmin$ and $\Dmax$ (and not on the pants
decomposition $P$), we note that there are only finitely many
topological types of pants decompositions on $S$.  Since $\Mod(S)$
acts by isometries on $\T(S)$, and since $\Mod(S)$ preserves the
infinite strips $[\Dmin,\Dmax]^{3g-3} \times \R^{3g-3}$, the Lipschitz
constant is the same for two pants decompositions which are
topologically equivalent.  Therefore, the constant
$C=C(S,\Dmin,\Dmax)$ is the maximum of finitely many Lipschitz
constants obtained as above.
\end{proof}

The next lemma is the technical statement which implies the existence
of the Bers constant---the universal constant $L_g$ with the property
that every hyperbolic surface of genus $g$ has a pants decomposition
of total length at most $L_g$ (cf. the inductive step of \cite[Theorem 5.2.3]{pb}).

\begin{lemma}
\label{lem:complete pd}
Let $g \geq 2$ and $L > 0$.  There is a constant $D=D(g,L)$ so that, if $M$ is a
geodesic multicurve of $X \in \T(S_g)$ where each curve of $M$ has
length at most $L$, then there is a pants decomposition $P$
containing $M$ so that each curve of $P$ has length less than $D(g,L)$.
\end{lemma}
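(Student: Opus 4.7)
The plan is to prove this as a relative version of Bers' theorem on the existence of bounded pants decompositions. After cutting $S_g$ along the multicurve $M$, we obtain subsurfaces with geodesic boundary, where the boundary lengths are bounded by $L$. We then show, by induction on complexity, that each such piece admits a pants decomposition consisting of curves with length bounded in terms of $g$ and $L$. Finally, combining these pants decompositions with $M$ itself yields the desired $P$.

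First I would cut $X$ along $M$ to produce hyperbolic surfaces with geodesic boundary $\Sigma_1, \ldots, \Sigma_k$. Each $\Sigma_i$ has boundary components of length at most $L$, and the topological types of the $\Sigma_i$ (their genera and numbers of boundary components) are controlled purely by $g$ and $|M| \leq 3g - 3$, so there are only finitely many possibilities. It therefore suffices to produce, for each fixed topological type of compact hyperbolic surface $\Sigma$ with geodesic boundary of length at most $L$, a pants decomposition of $\Sigma$ all of whose interior curves have length at most some constant $D'(\Sigma, L)$. Taking $D(g,L)$ to be the maximum of $L$ and all the $D'(\Sigma, L)$ over the finitely many relevant topological types (and combining these pants decompositions with $M$) then completes the proof.

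The main content is the relative Bers estimate for a single $\Sigma$, which I would prove by induction on $-\chi(\Sigma)$. The base case, when $\Sigma$ is a pair of pants, is trivial since the pants decomposition has no interior curves. For the inductive step, the key geometric input is that any compact hyperbolic surface of fixed topological type with boundary lengths at most $L$ has bounded area ($-2\pi\chi(\Sigma)$, by Gauss--Bonnet), so by the collar lemma one can find an essential non-peripheral simple closed geodesic $\gamma$ of length bounded by some $D_0(\Sigma, L)$: otherwise, a suitably large embedded collar around an interior point would have area exceeding that of $\Sigma$. Cutting along $\gamma$ yields one or two subsurfaces of strictly smaller complexity, each with geodesic boundary of length at most $\max\{L, D_0(\Sigma, L)\}$. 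Applying the inductive hypothesis and gathering the resulting pants curves with $\gamma$ gives the bound for $\Sigma$.

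The main obstacle is the geometric lemma producing the short essential non-peripheral geodesic. The subtlety is ensuring that the bound $D_0$ depends only on $\Sigma$ and $L$, not on the particular hyperbolic structure. This is handled by the standard collar/area argument, but one must be careful that the short curve obtained is neither null-homotopic nor peripheral; this is guaranteed by working at a point whose distance from the boundary exceeds the half-collar width determined by $L$, which forces any short curve hitting a large disk about that point to be interior and essential. Once this geometric lemma is in place, the induction closes routinely.
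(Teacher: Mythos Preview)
The paper does not actually prove this lemma; it states it as a known fact and points to the inductive step of Buser's Theorem~5.2.3 (the standard Bers-constant argument). Your proposal---cut along $M$, then on each piece induct on $-\chi$, at each step finding a bounded-length essential non-peripheral simple closed geodesic via the area bound and cutting---is exactly the argument in the cited reference, so your approach is correct and agrees with what the paper invokes.
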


We now construct the deformation of $K$ into $\A_{\epsilon,p}$.  The argument is modelled on Ivanov's proof of Theorem~\ref{thm:thick contract}.

\begin{proof}[Proof of Lemma~\ref{lem:flow1}]

We will define a smooth vector field $\V$ on an open subset of $Y_M$, and the resulting flow will give the required deformation.  The
basic idea is to use Fenchel--Nielsen coordinates to define a vector field that, at a given point, points in a direction where the following conditions hold.
\begin{list}{\boldmath$\cdot$}{}
\item The curves of $M$ are getting shorter, and their lengths are approaching the point $p \in \R_+^{|M|}$.  
\item Any curve that is not in $M$, is disjoint from $M$, and has
  length $\epsilon$, stays the same length; in particular, no curve
  outside of $M$ gets shorter than $\epsilon$ (we will choose $\epsilon > 0$ so that all curves begin with length greater than $\epsilon$).
\item Any minimizing cycle for $x$ not supported in $M$ has at least
  one curve that stays the same length (in particular, by the first condition,
  minimizing cycles not supported in $M$ immediately become
  nonminimizing).
\end{list}
In short, vectors should point towards the interior of $Y_M$ (third condition), and in particular towards $\A_{\epsilon,p}$ (first two conditions).  To accomplish this, there are three steps.

\hspace{.25in}Step 1: Make a choice of $\A_{\epsilon,p}$.

\hspace{.25in}Step 2: Define the vector field $\V$ locally.

\hspace{.25in}Step 3:  Check that $\V$ has the desired properties.

\emph{Step 1.} Picking an $\A_{\epsilon,p}$ involves choosing
$\epsilon$ and $p$.  Let $L_K$ be the length of the longest minimizing
cycle for $x$ in $K$.  We choose $\epsilon < \epsilon_{\hyp^2}$ small enough so that the
following two conditions are satisfied.
\begin{list}{\boldmath$\cdot$}{}
\item Every nontrivial geodesic in each surface of $K$ has length at
  least $\epsilon$. 
\item If $c$ is a geodesic that crosses another geodesic of length at
  most $\epsilon$, then $c$ has length greater than $L_K$.
\end{list}
To see that both conditions can be satisfied for $\epsilon$ small
enough, we use the fact that the length of a particular curve in $S$
is a continuous function on $\T(S)$, and for the second condition, we
apply the collar lemma, which states that, in any hyperbolic surface,
as the length of a geodesic $d$ tends to zero, the length of the
shortest geodesic intersecting $d$ transversely tends to infinity.

The point of the first condition on $\epsilon$ is that, in order to
prevent curves from getting shorter than $\epsilon$, they must be
longer than $\epsilon$ to begin with.  The second condition implies
that, in $K$, and at any point of $Y_M$ where the length of a
minimizing cycle is at most $L_K$, curves of length $\epsilon$
(or shorter) have trivial geometric intersection with the curves of
$M$.

In the proof of Lemma~\ref{lemma:choose}, the choice of $p$ only
depended on the choice of $\epsilon$ and on $M$.  We choose $p$ in the
same way here, using the $\epsilon$ defined here.  As a result, we have $\A_{\epsilon,p} \subset Y_M^o$.

\emph{Step 2.} We will define the vector field $\V$ on open sets
indexed by pants decompositions.  First, let $\Dmax$ be the constant
$D(g,L_K)$ from Lemma~\ref{lem:complete pd}, where $L_K$ is the
constant from Step 1.  Note that $\Dmax > L_K$.  Choose $\Dmin$ to
be smaller than the smallest coordinate of $p$ in $\R^{|M|}$.  Note
that $\Dmin < |p| < \epsilon$, and, in particular, $\Dmin$ is smaller
than the length of any curve in any surface of $K$.

Let $\P$ be the set of pants decompositions of $S$ that contain $M$.
For $P \in \P$, let $U_P$ be the open subset of $Y_M$ given by
\begin{eqnarray*}
U_P = \{ X \in Y_M & : & \mbox{if } X \in Y_{M'} \mbox{ then } M'
\subseteq P, \\
&& \mbox{if } \ell_X(c) \leq \epsilon \mbox{ then } c \in P, \\
&& \Dmin < \ell_X(c) < \Dmax \mbox{ for all } c \in P \}.
\end{eqnarray*}

We remark that the last two conditions in the definition of $U_P$ are
not contradictory since $\Dmax$ is automatically at least $\epsilon$.
Also, the second condition is indeed an open condition: restate it as
$c \notin P \Rightarrow \ell_X(c) > \epsilon$ and apply Fact~\ref{fact:rls}.

Let $U=\cup_{P \in \P} U_P$.  The compact set $K$ is
contained in $U$; this follows from the choices of $\Dmax$ and
$\Dmin$, Lemma~\ref{lem:complete pd}, and the fact that there are no
curves of length less than or equal to $\epsilon$ in surfaces of $K$.

For a given pants decomposition $P$, we now define a vector field
$\V_P$ on $U_P$.  Let $d: \R^{|M|} \to \R_{\geq 0}$ be the square of
the Euclidean distance from the point $p \in \R^{|M|}$.  Write
$\R^{6g-6}$ as $\R^{|M|} \times \R^{6g-6-|M|}$, and let $\V_P'$ be the
vector field on $FN_P^{-1}(U_P) \subset \R^{6g-6}$ given by $(-\nabla
d,0)$.  We then define $\V_P$ on $U_P$ via $\V_P = FN_P^\star(\V_P')$.

Consider the map
\[ \Phi_P: U_P \to \R^{|M|} \]
obtained by taking the lengths of the curves of $M$.  By construction,
$\Phi_P^\star(\V_P) = -\nabla d$.

There is a partition of unity subordinate to the cover of $U$ by the
$U_P$ which we use to combine the local vector fields $\V_P$ into a
global vector field $\V$ defined on all of $U$.  Since $\V$ is
obtained by averaging the $\V_P$, we still have $\Phi^\star(\V) =
-\nabla d$, where $\Phi : U \to \R^{|M|}$ is the map which records the
lengths of the curves of $M$.  In other words, even after using the
partition of unity, we still know what happens to the lengths of the
curves of $M$ along flow lines of $\V$.

\emph{Step 3.}  We will prove the following statements.

\begin{enumerate}
\item All flow lines starting in $K$ stay in $Y_M$.
\item All flow lines starting in $K$ stay in $U$.
\item There is a universal constant $C$ so that $||v|| \leq C||\Phi^\star(v)||$ for all $v$.
\item All flow lines starting in $K$ converge as time goes to
  infinity.
\end{enumerate}

The first three statements imply that the flow induced by $\V$ is
defined for all positive time.  From the fourth statement (plus the
definition of $\V$), we see that, at infinity, all points of $K$ end
up in $\A_{\epsilon,p}$.  The resulting map $K \to \A_{\epsilon,p}$ is
automatically continuous.  As $\A_{\epsilon,p}$ is contractible
(Lemma~\ref{lemma:a ep}), the lemma will follow.

To prove statement (1), there are two things to show: the
positive cycles for $x$ supported in $M$ all remain the same length,
and no other cycles for $x$ get shorter than those.  The first holds
because the subset of $\R^{|M|}$ where the positive cycles for $x$
have the same length is convex (by Lemma~\ref{lemma:borrowing
  relations} it is cut out by the linear equations coming from the
subsurface borrowing relations); since we are flowing along straight lines
towards $p$, we stay in this convex set.  For the second, we note
that, by construction, all vectors on the boundary of $Y_M$ point into
the interior of $Y_M$, i.e. the open chamber $Y_M^o$.

For statement (2), let $X$ be a point on a flow line which emanates from
$K$.  By the construction of $\V$, the image of $Y_M$ under the flow
induced by $\V$ is contained in $Y_M^o$ for all positive time.
Therefore, since we already showed that $K \subset U$, we may assume
that $X \in Y_M^o$.  We need to argue that there is a pants
decomposition $P$ of $X$ so that $X \in U_P$.  To construct $P$, we
start with $M$, add all curves of length at most $\epsilon$, and (if
needed) complete to a pants decomposition using curves of length at
most $\Dmax$.  We now check that $X \in U_P$; the only nontrivial
point is the condition $\Dmin < \ell_X(c) < \Dmax$ for all $c \in P$.
Each $\ell_X(c)$ is less than $\Dmax$ by the definition of $\Dmax$ and
the fact that the lengths of the curves of $M$ get shorter along flow
lines.  Each $\ell_X(c)$ is greater than $\Dmin$ because each curve of
$M$ stays longer than $|p| > \Dmin$ along flow lines, and
curves disjoint from $M$ do not get shorter than $\epsilon > \Dmin$
along flow lines.

For statement (3), we apply Lemma~\ref{lem:lipschitz}.  By the
definition of $U_P$, for any $P \in \P$, and any curve $c$ of $P$, we
have $\Dmin < \ell_X(c) < \Dmax$.  Let $C=C(S,\Dmin,\Dmax)$ be the
constant given by Lemma~\ref{lem:lipschitz}.  By the lemma, each map
$FN_P : FN_P^{-1}(U_P) \to U_P$ is $C$-Lipschitz.  In other words, the
length of a vector of $\V_P$ is at most $C$ times the length of the
corresponding vector of $\V_P'$.  The latter is the same as the length
of the corresponding vector of $-\nabla d$.  Since $\V$ is obtained
from the $\V_P$ by averaging, it follows that each vector of $\V$ has
length at most $C$ times the length of the corresponding vector of
$-\nabla d$ (the correspondence is given by $\Phi^\star$), which is
what we wanted to show.

From statement (3), it follows that if $\gamma$ is an integral path in
$\R^{6g-6}$ of length $L$, then the lift of this path to $\T(S)$ has
length at most $CL$.  Therefore, since flow lines of $\V_P'$ in
$\R^{6g-6}$ all converge, the corresponding flow lines in $\T(S)$ also
converge (statement (4)).  This completes the proof.
\end{proof}


\section{Stabilizer dimension}
\label{section:stabilizers}

Recall from the Introduction that, to prove
Theorems~\ref{main:torelli} and~\ref{main:kg}, it suffices to show, for each cell $\sigma$ of the complex of
cycles $\B(S_g)$, that the following inequalities hold.
\begin{eqnarray*}
\cd(\Stab_{\I(S_g)}(\sigma)) + \dim(\sigma) &\leq& 3g-5 \\
\cd(\Stab_{\K(S_g)}(\sigma)) + \dim(\sigma) &\leq& 2g-3
\end{eqnarray*}

By Lemma~\ref{lemma:covariant}, a cell $\sigma$ of $\B(S_g)$ is
associated to a multicurve $M$ consisting of nonseparating curves.
The stabilizer in $\I(S_g)$ of the cell $\sigma$ is exactly the
stabilizer of $M$.  Since an element of $\I(S_g)$ cannot reverse the
orientation of a nonseparating curve, the orientation of $M$ will play
no role in this section.

As in Section~\ref{section:complex}, the dimension of the cell
$\sigma$ is the number $B(M)$, which only depends on $M$ (without orientation).

In summary, the proof of Theorem~\ref{main:torelli} is reduced to the following.

\begin{prop}
\label{prop:dim lemma}
For $g \geq 2$ and $M$ as above, $\cd(\Stab_{\I(S_g)}(M)) + B(M) \leq 3g-5$. 
\end{prop}

And Theorem~\ref{main:kg} is implied by the
following proposition.

\begin{prop}
\label{prop:dim lemma kg}
For $g \geq 2$ and $M$ as above, $\cd(\Stab_{\K(S_g)}(M)) + B(M) \leq 2g-3$. 
\end{prop}

Before proving Propositions~\ref{prop:dim lemma} and~\ref{prop:dim lemma kg} in
Sections~\ref{section:proof of dim lemma} and~\ref{section:kg},
respectively, we need some preliminaries related to the Birman exact sequence.



\subsection{The Birman exact sequence and dimension}
\label{section:bes}

In order to state the classical Birman exact sequence, we need one definition.  The \emph{pure mapping class group} of a surface $S$, denoted $\PMod(S)$, is the subgroup of $\Mod(S)$ consisting of elements that fix each puncture and boundary component of $S$ individually.

\begin{theorem}[Birman exact sequence]
\label{bes}Let $S$ be a surface of negative Euler characteristic, and let $S'$ be
$S-p$.  We have
\[ 1 \to \pi_1(S,p) \to \PMod(S') \to \PMod(S) \to 1. \]
\end{theorem}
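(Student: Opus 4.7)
The plan is to derive the sequence from the long exact sequence of a fibration on the homeomorphism group. Let $H(S) \subset \Homeo^+(S)$ denote the subgroup of orientation-preserving homeomorphisms that fix each puncture and boundary component of $S$ individually, so that $\pi_0(H(S)) = \PMod(S)$. The evaluation map
\[ \mathrm{ev}\colon H(S) \to S, \qquad f \mapsto f(p), \]
is a locally trivial fibration (by an isotopy extension argument, or by Cerf's theorem on the fibration structure of diffeomorphism groups), with fiber $H(S,p) = \{f \in H(S) : f(p)=p\}$.

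First I would identify this fiber, up to homotopy equivalence, with the pure homeomorphism group of $S' = S \setminus \{p\}$: restriction defines a homotopy equivalence $H(S,p) \simeq H(S')$, where on $S'$ the point $p$ is now regarded as an additional puncture. In particular, $\pi_0(H(S,p)) = \PMod(S')$. Next, I would invoke the Earle--Eells theorem (together with the Earle--Schatz extension for surfaces with boundary and punctures), which asserts that whenever $\chi(S) < 0$ the identity component of $H(S)$ is contractible; in particular $\pi_1(H(S)) = 1$.

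Running the long exact sequence of the fibration yields
\[ \pi_1(H(S)) \to \pi_1(S,p) \to \pi_0(H(S,p)) \to \pi_0(H(S)) \to \pi_0(S), \]
and substituting the identifications above, together with $\pi_0(S) = 0$, collapses this to the desired short exact sequence
\[ 1 \to \pi_1(S,p) \to \PMod(S') \to \PMod(S) \to 1. \]

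The main obstacle is making the homotopy equivalence $H(S,p) \simeq H(S')$ precise: one must check that filling in the puncture extends a homeomorphism of $S'$ in a way that is well-defined up to isotopy, which uses a collar neighborhood of the puncture and, in the boundary-component case, the care needed to stay inside the \emph{pure} subgroup. The other delicate point is citing Earle--Eells--Schatz correctly. The remainder of the argument is a formal diagram chase. If one wishes to understand the kernel geometrically, it is worth verifying that the connecting homomorphism $\pi_1(S,p) \to \PMod(S')$ is the classical \emph{point-pushing} map, which sends a loop $\gamma$ to the mapping class obtained by dragging the puncture $p$ around $\gamma$ in $S'$.
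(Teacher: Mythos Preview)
Your argument is correct and is the standard proof of the Birman exact sequence via the evaluation fibration and the Earle--Eells contractibility theorem. Note, however, that the paper does not actually prove this theorem: it is stated as a classical result and the reader is referred to \cite[Section 4.1]{jsb} for details, with only a brief description of the forgetful and point-pushing maps. So there is no ``paper's own proof'' to compare against; your write-up supplies exactly the argument one would find in the cited reference.
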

The map $\PMod(S') \to \PMod(S)$ is obtained by ``forgetting'' the
puncture $p$, and the image of an element of $\pi_1(S)$ in $\PMod(S')$
is realized by ``pushing'' the puncture $p$ around that element of
$\pi_1(S)$; see \cite[Section 4.1]{jsb}.

We will use the Birman exact sequence to describe the effect of
punctures on the cohomological dimension of subgroups of the mapping
class group.  There are three statements; one for higher genus ($g
\geq 2$), one for genus 1, and one for genus 0.  We use $S_{g,p}$ to
denote a surface of genus $g$ with $p$ punctures (and no boundary).

The three corollaries below follow, in a straightforward way, by inducting on the number of punctures, and applying the following facts; see \cite[Chapter VIII, Proposition 2.4]{ksb}.

\begin{fact}[Subadditivity of cohomological dimension]
\label{fact:cd ses}
If we have a short exact sequence of groups
\[ 1 \to K \to G \to Q \to 1 \]
then $\cd(G) \leq \cd(K) + \cd(Q)$.
\end{fact}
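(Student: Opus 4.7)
The plan is to use the Lyndon--Hochschild--Serre spectral sequence associated to the extension. First, I may assume both $\cd(K)$ and $\cd(Q)$ are finite, since otherwise the inequality is vacuous. Set $k = \cd(K)$ and $q = \cd(Q)$.

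For any $G$-module $M$, the normal subgroup $K$ of $G$ gives rise to a first-quadrant cohomological spectral sequence
\[ E_2^{p,r} = H^p\bigl(Q,\, H^r(K, M)\bigr) \Longrightarrow H^{p+r}(G, M). \]
The key point is that, because $K$ is normal, the cohomology groups $H^r(K, M)$ carry a natural $Q = G/K$-module structure, so the $E_2$ term makes sense.

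Next I would observe the vanishing of the $E_2$ page outside a finite rectangle. Since $\cd(K) = k$, we have $H^r(K, N) = 0$ for $r > k$ and every $K$-module $N$; in particular, $E_2^{p,r} = 0$ whenever $r > k$. Similarly, since $\cd(Q) = q$, we have $H^p(Q, N') = 0$ for $p > q$ and every $Q$-module $N'$; in particular, $E_2^{p,r} = 0$ whenever $p > q$. Hence $E_2^{p,r}$ is supported in the rectangle $0 \leq p \leq q$, $0 \leq r \leq k$, and the same holds for every later page $E_s^{p,r}$ and for $E_\infty^{p,r}$.

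Finally, for $n > k + q$, every pair $(p, r)$ with $p + r = n$ satisfies either $p > q$ or $r > k$, so $E_\infty^{p, r} = 0$ for all such $(p, r)$. Since $H^n(G, M)$ has a filtration whose associated graded pieces are exactly the $E_\infty^{p, n-p}$, it follows that $H^n(G, M) = 0$ for all $n > k + q$ and all $G$-modules $M$, giving $\cd(G) \leq k + q = \cd(K) + \cd(Q)$. The only thing to be careful about is ensuring the spectral sequence is available in the generality needed (arbitrary $G$-module coefficients, no finiteness hypotheses on $K$ or $Q$), which is the standard construction in Brown's book and is the real content being invoked.
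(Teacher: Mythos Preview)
Your argument is correct and is precisely the approach the paper has in mind: the paper does not give a self-contained proof but merely cites Brown's book and remarks that the fact ``is an easy consequence of the Hochschild--Serre spectral sequence,'' which is exactly the argument you wrote out.
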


\begin{fact}[Monotonicity of cohomological dimension]
\label{fact:cd subgp}
If $H$ is a subgroup of $G$, then $\cd(H) \leq \cd(G)$.
\end{fact}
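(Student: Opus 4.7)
The plan is to reduce the cohomology of $H$ to that of $G$ via coinduction, so that a vanishing theorem in degrees above $\cd(G)$ automatically transfers to $H$. The only ingredient that uses the subgroup hypothesis is the structural fact that $\Z G$ is free as a right $\Z H$-module: picking a set $\{g_\alpha\}$ of representatives for the right cosets $H\backslash G$ gives $\Z G = \bigoplus_\alpha \Z H\cdot g_\alpha$, and consequently restriction from $\Z G$ to $\Z H$ sends projective modules to projective modules.

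Next I would invoke Shapiro's lemma. For a $\Z H$-module $M$, form the coinduced $\Z G$-module
\[ N \;:=\; \operatorname{Hom}_{\Z H}(\Z G, M), \]
with $G$ acting on $N$ by $(g\cdot\varphi)(x) = \varphi(xg)$. For any projective $\Z G$-resolution $P_\bullet \to \Z$, the hom--tensor adjunction produces a natural isomorphism of cochain complexes $\operatorname{Hom}_{\Z G}(P_\bullet, N) \cong \operatorname{Hom}_{\Z H}(P_\bullet, M)$. Because $P_\bullet$ remains $\Z H$-projective by the freeness observation above, each side computes the cohomology it appears to; passing to cohomology yields the Shapiro isomorphism $H^k(G, N) \cong H^k(H, M)$ for all $k \geq 0$.

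To conclude, suppose $\cd(G) = n$ (the case $n = \infty$ is vacuous). Then $H^k(G, N') = 0$ for every $\Z G$-module $N'$ and every $k > n$; specializing to $N' = N = \operatorname{Hom}_{\Z H}(\Z G, M)$ gives $H^k(H, M) = 0$ for every $\Z H$-module $M$ and every $k > n$, which is precisely the bound $\cd(H) \leq n$.

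The hard part: there is essentially no obstacle---this is the standard monotonicity lemma from homological algebra, already cited in the statement of the fact. The sole place the subgroup hypothesis enters is the freeness of $\Z G$ over $\Z H$, and everything downstream of that is formal.
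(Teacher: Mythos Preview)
Your argument is correct: freeness of $\Z G$ over $\Z H$, Shapiro's lemma via coinduction, and the vanishing above $\cd(G)$ combine to give the bound exactly as you describe. The paper does not supply a proof of this fact at all---it simply cites it as \cite[Chapter VIII, Proposition 2.4]{ksb} (Brown's book), where the argument is essentially the one you have written.
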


One also needs the basic facts that $\cd(\pi_1(S_g)) = 2$ for $g \geq
1$, and $\cd(\pi_1(S_{g,p})) = 1$ if $p>0$, and $(g,p) \neq (0,1)$.

We start with the case $g \geq 2$.  For the application of
Corollary~\ref{cor:cd bes} in Section~\ref{section:proof of dim lemma}, we will take $H=\I(S_g)$.

\begin{cor}
\label{cor:cd bes}
Let $g \geq 2$, and let $G$ be a subgroup
of $\PMod(S_{g,p})$.  Let $F:\PMod(S_{g,p}) \to \Mod(S_g)$ be the map
induced by forgetting all of the punctures, and suppose $F(G)$ is
contained in a subgroup $H$ of $\Mod(S_g)$.  We have 
\[ \cd(G) \leq \cd(H) + (p+1). \]
\end{cor}

Since the torus has Euler characteristic zero, we will instead apply
Theorem~\ref{bes} to the once punctured torus.  We have the following
analog of Corollary~\ref{cor:cd bes} for this scenario.  In
Section~\ref{section:proof of dim lemma}, we will apply
Corollary~\ref{cor:cd bes g1} with $g=1$, $k=1$, and $H=\I(S_{1,1})=1$.

\begin{cor}
\label{cor:cd bes g1}
Let $g \geq 1$, let $p\geq k>0$, and let $F : \PMod(S_{g,p})
\to \Mod(S_{g,k})$ be the map induced by forgetting $p-k$ of the
punctures of $S_{g,p}$.  If $G < \PMod(S_{g,p})$ and $F(G) < H$, then 
\[ \cd(G) \leq \cd(H) + (p-k). \]
\end{cor}

In the genus 0 case, we only need the following.

\begin{cor}
\label{cor:cd bes g0}
For $p \geq 3$, we have $\cd(\PMod(S_{0,p})) \leq p-3$.
\end{cor}

To prove the last corollary, one needs the fact that $\PMod(S_{0,3}) = 1$.

In Section~\ref{section:kg}, we will need a specialized version of
Corollary~\ref{cor:cd bes}.  For the statement, a \emph{bounding pair
  map} is a product $T_cT_d^{-1}$, where $c$ and $d$ are disjoint,
homologous nonseparating curves.

\begin{lemma}
\label{lem:cd bes kg}
Let $g \geq 2$, and let $G$ be a subgroup
of $\PMod(S_{g,p})$.  Let $F_1:\PMod(S_{g,p}) \to \Mod(S_{g,1})$ and
$F:\PMod(S_{g,p}) \to \Mod(S_g)$ be the maps obtained by forgetting
all but one of the punctures and all of the punctures, respectively.
Suppose that $F_1(G)$ contains no power of any bounding pair maps, and
that $F(G)$ is contained in a subgroup 
$H$ of $\Mod(S_g)$.  We have 
\[ \cd(G) \leq \cd(H) + p. \]
\end{lemma}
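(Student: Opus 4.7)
I plan to prove this by induction on $p$. The structure parallels the proof of Corollary~\ref{cor:cd bes}—repeatedly apply the Birman exact sequence to forget punctures one at a time—except that at the very last step we extract one extra dimension of savings from the bounding-pair hypothesis.

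For the inductive step ($p \geq 2$), choose any puncture of $S_{g,p}$ other than the ``special'' one preserved by $F_1$; forgetting it gives the Birman exact sequence
\[ 1 \to \pi_1(S_{g,p-1}) \to \PMod(S_{g,p}) \xrightarrow{F_p} \PMod(S_{g,p-1}) \to 1. \]
Set $G' = F_p(G)$. By the choice of puncture we have $F_1 = F_1' \circ F_p$ and $F = F' \circ F_p$, where primes denote the analogous maps out of $\PMod(S_{g,p-1})$, so both hypotheses descend to $G'$. Restricting the exact sequence to $G$ and applying subadditivity (Fact~\ref{fact:cd ses}) gives
\[ \cd(G) \leq \cd(G \cap \pi_1(S_{g,p-1})) + \cd(G') \leq 1 + [\cd(H) + (p-1)] = \cd(H) + p, \]
using that $\pi_1(S_{g,p-1})$ is free (since $p-1 \geq 1$, the surface has a puncture) together with the inductive hypothesis.

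The heart of the argument is the base case $p = 1$, where $F_1$ is the identity so $G$ itself contains no power of a bounding pair map. The Birman exact sequence reads
\[ 1 \to \pi_1(S_g) \to \PMod(S_{g,1}) \xrightarrow{F} \Mod(S_g) \to 1, \]
with the first map realized by point-pushing about the puncture. Let $K = G \cap \pi_1(S_g)$; subadditivity reduces the problem to showing $\cd(K) \leq 1$. Fix any simple nonseparating loop $\alpha \in \pi_1(S_g)$ based at the puncture. A standard computation identifies the point-push of $\alpha$ with the bounding pair map $T_c T_d^{-1}$, where $c, d$ are the two boundary components of a regular annular neighborhood of $\alpha$ (both nonseparating and homologous to $\alpha$). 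Consequently the push of $\alpha^k$ is $(T_c T_d^{-1})^k$, which by hypothesis lies in $G$ only for $k = 0$; so no nontrivial power of $\alpha$ lies in $K$. This forces $K$ to have infinite index in $\pi_1(S_g)$, because any finite-index subgroup contains a power of every element by a coset-counting argument. An infinite-index subgroup of a closed surface group corresponds to a non-compact cover and is therefore free, so $\cd(K) \leq 1$ as required. The only delicate point is identifying the point-push of a simple nonseparating loop with a bounding pair map; everything else is bookkeeping with the Birman sequence and Facts~\ref{fact:cd ses} and~\ref{fact:cd subgp}.
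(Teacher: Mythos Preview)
Your proof is correct and follows essentially the same approach as the paper: iterate the Birman exact sequence to reduce to the one-puncture case, then use the bounding-pair hypothesis to show the kernel of $F_1(G)\to F(G)$ has infinite index in $\pi_1(S_g)$ and is therefore free, saving one dimension. The paper organizes this as ``first reduce to $F_1(G)$, then handle the last step,'' while you frame it as an induction with base case $p=1$, but the content is identical.
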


\begin{proof}

As in the case of Corollary~\ref{cor:cd bes}, we apply
Theorem~\ref{bes} inductively.  However, to improve from $p+1$ to
$p$, we need an extra argument in the last step.

By the same straightforward applications of Fact~\ref{fact:cd ses} and Theorem~\ref{bes} used for
Corollary~\ref{cor:cd bes}, we have that
\[ \cd(G) \leq \cd(F_1(G)) + (p-1).\]
It remains to show
\[ \cd(F_1(G)) \leq \cd(H) + 1. \]
We consider the exact sequence
\[ 1 \to A \to F_1(G) \to F(G) \to 1. \]
By Theorem~\ref{bes}, the kernel $A$ is a subgroup of $\pi_1(S_g)$.
We claim that $A$ has infinite index; indeed, under the map
$\pi_1(S_{g,1}) \to \Mod(S_{g,1})$ from Theorem~\ref{bes}, an element
$\alpha$ of $\pi_1(S_{g,1})$ represented by a simple nonseparating
loop maps to a bounding pair map (cf. \cite[Figure 14]{jsb}), and so
by assumption no power of $\alpha$ is an element of $A$.  It follows
that $A$ is a free group (the corresponding cover of $S_g$ is
noncompact).  This implies that $\cd(A) \leq 1$.  It remains to apply 
Facts~\ref{fact:cd ses} and~\ref{fact:cd subgp} (subadditivity and monotonicity).
\end{proof}


\subsection{Stabilizer dimensions for \boldmath$\I(S_g)$}
\label{section:proof of dim lemma}

For the duration of this section, we fix some $g \geq 2$.
Also, throughout the section, let $M$ be a fixed multicurve in $S=S_g$
consisting entirely of nonseparating curves.  Suppose that $S-M$ has
$P$ components of positive genus $S_1, \dots, S_P$ and $Z$ components
of genus zero $S_{P+1}, \dots, S_{P+Z}$.  Say that $S_i$ is
homeomorphic to a surface of genus $g_i$ with $p_i$ punctures.

Let $G(M)$ be the free abelian group generated by the Dehn twists in
the curves of $M$.  It is a theorem of Vautaw that $G(M) \cap \I(S_g)$
is generated by bounding pair maps \cite[Theorem 3.1]{wrv}.  Let $BP$
be the number of curves of $M$ minus the number of distinct homology
classes represented by the curves of $M$.  We will use the following
consequence of Vautaw's theorem.

\begin{theorem}
\label{vautaw}
For $M$ as above, we have $G(M) \cap \I(S_g) \cong \Z^{BP}$.
\end{theorem}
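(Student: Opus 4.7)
The plan is to derive Theorem~\ref{vautaw} from Vautaw's theorem by a straightforward rank count, after grouping the curves of $M$ by homology class. Pick orientations so that each $c_i \in M$ has class equal to one of a chosen set of primitive classes $d_1,\ldots,d_k \in H_1(S,\Z)$, and let $I_j \subseteq M$ be the subset with class $d_j$; set $m_j = |I_j|$, so $\sum_j m_j = |M|$ and $BP = \sum_j (m_j - 1)$. Because $G(M)$ is free abelian on the $T_c$ for $c \in M$, there is a compatible internal decomposition $G(M) = \bigoplus_j G(I_j)$ with $G(I_j) \cong \Z^{m_j}$.

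Next, I would invoke Vautaw's theorem to decompose the Torelli intersection. A bounding pair map $T_c T_d^{-1}$ lies in $G(M)$ iff both $c$ and $d$ belong to $M$, and such a pair is necessarily disjoint and homologous, hence both curves lie in a common $I_j$. So Vautaw's theorem gives
\[
G(M) \cap \I(S_g) = \bigoplus_j \bigl( G(I_j) \cap \I(S_g) \bigr).
\]

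It remains to compute each summand by hand. Since the curves of $I_j$ are pairwise disjoint, their Dehn twists commute in their action on $H_1(S,\Z)$, and a direct computation (using that $\langle [c_i], [c_{i'}] \rangle = 0$ for disjoint $c_i, c_{i'}$) shows that $\prod_{i \in I_j} T_{c_i}^{n_i}$ acts by
\[
\alpha \ \longmapsto\ \alpha + \Bigl(\sum_{i \in I_j} n_i\Bigr)\, \langle \alpha, d_j \rangle\, d_j.
\]
Since $d_j \neq 0$ and the symplectic form on $H_1(S,\Z)$ is nondegenerate, this action is trivial iff $\sum_{i \in I_j} n_i = 0$. Hence $G(I_j) \cap \I(S_g)$ is the kernel of the coordinate-sum map $\Z^{m_j} \to \Z$, which is free abelian of rank $m_j - 1$. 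Summing over $j$ yields $G(M) \cap \I(S_g) \cong \Z^{BP}$ as claimed.

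The only nontrivial ingredient is the appeal to Vautaw's theorem, which ensures that every element of $G(M) \cap \I(S_g)$ respects the splitting by the $I_j$'s. Without it, one would have to show directly that the rank-one symmetric tensors $d_j \otimes d_j$ in $\mathrm{Sym}^2 H_1(S,\Q)$ are linearly independent across all $j$, a geometric statement about configurations of disjoint nonseparating curves; with Vautaw in hand, the remainder of the argument is purely formal bookkeeping.
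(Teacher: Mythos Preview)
Your argument is correct and is precisely the deduction the paper has in mind. The paper does not actually give a proof of Theorem~\ref{vautaw}: it states Vautaw's result that $G(M)\cap\I(S_g)$ is generated by bounding pair maps, and then records the rank count $\Z^{BP}$ as an immediate consequence without further explanation. Your write-up supplies exactly those omitted details---grouping the curves of $M$ by homology class, observing that each bounding-pair generator lies in a single block $G(I_j)$, and then reading off the rank of each block from the transvection formula for Dehn twists on $H_1$.
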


\begin{lemma}
\label{lemma:stab bound}
Assume Proposition~\ref{prop:dim lemma} for all genera between 2 and $g-1$, inclusive.
In the notation of the preceding paragraphs, we have
\[ \cd(\Stab_{\I(S)}(M)) \leq \sum_{i=1}^P (3g_i+p_i-4) + \sum_{i=P+1}^{P+Z}
(p_i-3) + BP. \]
\end{lemma}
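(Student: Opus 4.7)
The idea is to decompose $\Stab_{\I(S)}(M)$ via cutting along $M$ and to bound the cohomological dimension of each resulting piece using Theorem~\ref{vautaw} together with Corollaries~\ref{cor:cd bes}, \ref{cor:cd bes g1}, and \ref{cor:cd bes g0}, combined with an induction on $g$.

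First I pass to the finite-index subgroup $\Stab^0 \leq \Stab_{\I(S)}(M)$ consisting of elements that fix each curve of $M$ with its orientation and each complementary component of $S-M$; since $\I(S)$ is torsion-free, this does not change cohomological dimension. Cutting $S$ along $M$ and restricting to the pieces then yields a short exact sequence
$$1 \longrightarrow K \longrightarrow \Stab^0 \longrightarrow Q \longrightarrow 1,$$
in which the kernel $K$ consists of the elements of $\I(S)$ supported in a regular neighborhood of $M$ and so is isomorphic to $\Z^{BP}$ by Theorem~\ref{vautaw}, while the quotient $Q$ embeds into the product $\prod_{i=1}^{P+Z} \PMod(S_i)$.

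Next I bound $\cd$ of the projection $Q_i$ of $Q$ to each factor $\PMod(S_i)$. For a positive-genus piece, the key homological claim is that the further image of $Q_i$ in $\Mod(\bar S_i)$, obtained by filling in the punctures of $S_i$, lies in $\I(\bar S_i)$: one chooses a symplectic basis of $H_1(\bar S_i,\Z)$ represented by simple closed curves in $S_i$ disjoint from $\partial S_i$, and these extend to part of a symplectic basis of $H_1(S,\Z)$, so the triviality of the $\Stab^0$-action on $H_1(S,\Z)$ descends to $H_1(\bar S_i,\Z)$. Granting this, for $g_i \geq 2$ I apply Corollary~\ref{cor:cd bes} with $H = \I(\bar S_i)$ and the inductive bound $\cd(\I(S_{g_i})) \leq 3g_i - 5$ to obtain $\cd(Q_i) \leq 3g_i + p_i - 4$; for $g_i = 1$ I apply Corollary~\ref{cor:cd bes g1} with $k=1$ and $H = \I(S_{1,1}) = 1$ to obtain $\cd(Q_i) \leq p_i - 1$, which equals $3g_i + p_i - 4$; and for $g_i = 0$, Corollary~\ref{cor:cd bes g0} gives $\cd(Q_i) \leq p_i - 3$.

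To finish, I combine these bounds using monotonicity and subadditivity of cohomological dimension (Facts~\ref{fact:cd subgp} and~\ref{fact:cd ses}): summing over the components bounds $\cd(Q)$ by the first two sums in the statement, and one final application of subadditivity to the cutting exact sequence adds on $\cd(K) = BP$. The main technical hurdle is the homological claim in the middle step, since the inclusion map $H_1(S_i,\Z) \to H_1(S,\Z)$ may have a nonzero kernel coming from relations among boundary classes imposed by $S \setminus S_i$, and one must verify that this kernel does not obstruct the descent of the trivial action to $H_1(\bar S_i,\Z)$; producing a symplectic basis of $H_1(\bar S_i,\Z)$ by curves that remain independent after inclusion into $S$ is the cleanest way to sidestep this issue.
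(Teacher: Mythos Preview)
Your proposal is correct and follows essentially the same route as the paper: reduce to the Birman--Lubotzky--McCarthy cutting sequence, identify the kernel as $\Z^{BP}$ via Theorem~\ref{vautaw}, and bound the image in each $\PMod(S_i)$ by the same case split (Corollaries~\ref{cor:cd bes g0}, \ref{cor:cd bes g1}, \ref{cor:cd bes}) using the inductive hypothesis. The only cosmetic differences are that the paper cites Ivanov to see directly that $\Stab_{\I(S)}(M)$ already lands in $\PMod(S-M)$ (so no finite-index passage is needed), and it verifies the key homological claim by pushing forward a bounding surface for $c-f(c)$ rather than by your symplectic-basis argument; both arguments are valid.
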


\begin{proof}

Birman--Lubotzky--McCarthy proved that we have the following short
exact sequence \cite[Lemma 2.1]{blm}:
\[ 1 \to G(M) \to \Stab_{\Mod(S)}(M) \to \Mod(S-M) \to 1. \]
Here, the group $\Stab_{\Mod(S)}(M)$ consists of those mapping classes
that fix the isotopy class of $M$, and, as above, $G(M) \cong Z^{|M|}$
is the group generated by the Dehn twists about the curves of $M$.

If we restrict to the Torelli group, work of Ivanov shows that the map
from $\Stab_{\I(S)}(M)$ to $\Mod(S-M)$ has image in $\PMod(S-M)$, and
so, if $S_i$ is a component of $S-M$, there is a map
$\Stab_{\I(S)}(M) \to \Mod(S_i)$, i.e., the components of $S-M$ are
preserved \cite[Corollary 1.8]{nvi}.

Applying Theorem~\ref{vautaw}, we arrive at the appropriate sequence
for the Torelli group:
\[ 1 \to \Z^{BP} \to \Stab_{\I(S)}(M) \to \PMod(S-M). \]

By Fact~\ref{fact:cd ses} (subadditivity), Theorem~\ref{vautaw}, and the fact that $\cd(\Z^{BP})=BP$, it remains to show that the image of
$\Stab_{\I(S)}(M)$ in $\PMod(S-M)$ has dimension at most
\[ \sum_{i=1}^P (3g_i+p_i-4) + \sum_{i=1}^Z (p_i-3). \]
Let $G_i$ be the image of $\Stab_{\I(S)}(M)$ in each $\PMod(S_i)$, and
let $G$ be the direct product of the $G_i$.  Since the image of
$\Stab_{\I(S)}(M)$ in $\PMod(S-M)$ is a subgroup of $G$, it suffices
to show that if $g_i=0$, then $\cd(G_i) \leq p_i-3$, and
if $g_i > 0$, then $\cd(G_i) \leq 3g_i+p_i-4$ (apply
Facts~\ref{fact:cd subgp} and~\ref{fact:cd ses} (monotonicity and subadditivity)).

The genus zero case follows immediately from Fact~\ref{fact:cd subgp} (monotonicity) and
the inequality $\cd(\PMod(S_{0,p})) \leq p-3$ (Corollary~\ref{cor:cd
  bes g0}).

Now suppose $S_i$ has genus 1.  Let $\hat S_i$ be the once-punctured
torus obtained from $S_i$ by forgetting all of the punctures except
for one, and let $\Phi : S_i \to \hat S_i$ and $G_i \to \Mod(\hat
S_i)$ be the induced maps.  We have the following key observation.

\noindent \emph{Observation:} \label{obs} The image of $G_i$ is a subgroup of $\I(\hat S_i)$.

The point is that, given any curve $c$ in $S_i$, and any $f \in
\Stab_{\I(S)}(M)$, we have that $f(c)$ lies in $S_i$, and if $\Sigma$
is the surface that proves $c$ and $f(c)$ homologous, then
$\Phi(\Sigma)$ shows that $\Phi(c)$ and $\Phi(f(c))$ are homologous.

Now, we know that $\I(S_{1,1})=1$, and so the observation combined with
Corollary~\ref{cor:cd bes g1} gives $\cd(G_i) \leq 0 + (p_i-1)$.  The
latter happens to be equal to $3g_i+p_i-4$.

The case $g_i \geq 2$ is similar.  Here, we take $\hat S_i$ to be the
closed surface obtained from $S_i$ by forgetting all of the punctures.
The observation that $G_i$ is a subgroup of $\I(\hat S_i)$ is still
valid for the same reason as above.  Since $g_i < g$, we can apply Proposition~\ref{prop:dim
  lemma}, which tells us that $\cd(\I(\hat S_i)) = 3g_i-5$.  Applying the
observation and Corollary~\ref{cor:cd bes}, we arrive at $\cd(G_i)
\leq (3g_i-5)+(p_i+1) = 3g_i+p_i-4$.  This completes the proof.
\end{proof}

\begin{figure}[htb]
\psfrag{a1}{$a_1$}
\psfrag{a2}{$a_2$}
\psfrag{a3}{$a_3$}
\psfrag{b1}{$b_1$}
\psfrag{b2}{$b_2$}
\psfrag{c1}{$c_1$}
\psfrag{c2}{$c_2$}
\psfrag{d}{$d$}
\centerline{\includegraphics[scale=.5]{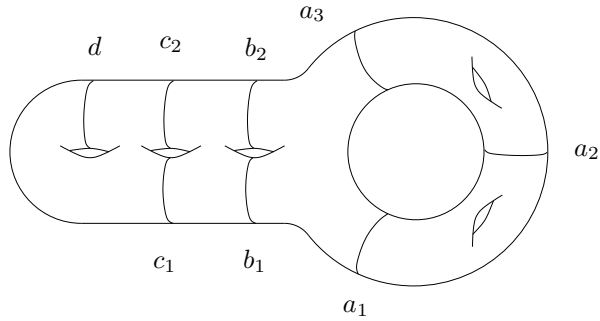}}
\caption{A multicurve in $S_6$ with $BP=4$, $D=4$, and $P=2$.}
\label{bppic}
\end{figure}

We now give another version of Lemma~\ref{lemma:stab bound}, obtained by
algebraic manipulation of the upper bound.

\begin{lemma}
\label{lemma:stab bound 2}
With the same assumptions and notation as Lemma~\ref{lemma:stab
  bound}, we have
\[ \cd(\Stab_{\I(S)}(M)) \leq 3g-3-P-|M| + BP . \]
\end{lemma}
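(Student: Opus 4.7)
The plan is to observe that Lemma~\ref{lemma:stab bound 2} is purely an algebraic reformulation of Lemma~\ref{lemma:stab bound}; the two upper bounds agree by a direct calculation involving the Euler characteristic. So I would start from the inequality
\[
\cd(\Stab_{\I(S)}(M)) \leq \sum_{i=1}^P (3g_i+p_i-4) + \sum_{i=P+1}^{P+Z}(p_i-3) + BP
\]
and show that the right-hand side simplifies to $3g-3-P-|M|+BP$.

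The key inputs are two bookkeeping identities about the decomposition of $S$ by $M$. First, since every curve of $M$ contributes exactly two boundary circles when we cut (and $M$ has no separating curves, but that is not needed here), we have $\sum_{i=1}^{P+Z} p_i = 2|M|$. Second, additivity of Euler characteristic gives
\[
2-2g \;=\; \chi(S) \;=\; \sum_{i=1}^{P+Z}\chi(S_i) \;=\; \sum_{i=1}^{P+Z}(2-2g_i-p_i) \;=\; 2(P+Z) - 2\sum_{i=1}^{P} g_i - 2|M|,
\]
since the components $S_{P+1},\dots,S_{P+Z}$ have genus zero. Solving yields $\sum_{i=1}^{P} g_i = g + P + Z - |M| - 1$.

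Now I substitute into the bound from Lemma~\ref{lemma:stab bound}. Expanding,
\[
\sum_{i=1}^P (3g_i+p_i-4) + \sum_{i=P+1}^{P+Z}(p_i-3) = 3\!\sum_{i=1}^P g_i + \sum_{i=1}^{P+Z} p_i - 4P - 3Z.
\]
Plugging in $\sum p_i = 2|M|$ and the formula for $\sum g_i$, this becomes
\[
3(g+P+Z-|M|-1) + 2|M| - 4P - 3Z = 3g-3-P-|M|,
\]
after cancellation of the $Z$-terms and simplification of the $P$- and $|M|$-terms. Adding the $BP$ that appears on both sides completes the argument.

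There is no real obstacle here; the only thing to keep an eye on is matching conventions (genus-zero components appear only in the second sum, and each curve of $M$ contributes to two punctures of $S-M$, even if both punctures happen to lie in the same component). I would state the manipulation cleanly in a single short proof after the statement of the lemma.
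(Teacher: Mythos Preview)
Your proof is correct and is essentially the same algebraic manipulation as the paper's. The only organizational difference is that the paper routes the identity $\sum g_i = g + P + Z - |M| - 1$ through the quantity $D$ (using $D = g - \sum g_i$ together with Lemma~\ref{lemma:cell dimension}, which gives $D + P + Z = |M| + 1$), whereas you derive it directly from additivity of the Euler characteristic; the two are equivalent and yield the same one-line simplification.
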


Since the quantity $BP$ is equal to $|M|-C$, where $C$ is the number
of distinct homology classes represented by the curves of $M$, the
upper bound of the lemma can be replaced with the equivalent one,
$3g-3-P-C$.

\begin{proof}

By Lemma~\ref{lemma:stab bound}, it suffices to show that
\[ \sum_{i=1}^P (3g_i+p_i-4) + \sum_{i=P+1}^{P+Z}(p_i-3) + BP = 3g-3-P-|M| + BP. \]
We have
\begin{eqnarray*}
\sum_{i=1}^P (3g_i+p_i-4) + \sum_{i=P+1}^{P+Z}(p_i-3) + BP &=& 3\sum g_i + 2|M| -4P - 3Z + BP \\
&=& 3(g-D)+2|M|-4P-3Z+BP \\
&=& 3g-3(|M|+1)+2|M|-P+BP \\
&=& 3g-3-P-|M| + BP. \\
\end{eqnarray*}
For the first equality, we used the fact that $\sum p_i = 2|M|$.
Then, we applied the fact that $D=g-\sum g_i$.  The third equality is
a consequence of Lemma~\ref{lemma:cell dimension}, which says that
$D+P+Z = |M|+1$, and the last equality is obtained by simple algebra.
\end{proof}

The next lemma is a basic fact about multicurves made up of nonseparating curves.

\begin{lemma}
\label{lemma:bpdp}
For the multicurve $M$, we have $BP + 2 \leq D + P$.
\end{lemma}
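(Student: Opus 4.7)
The first step is to translate the inequality into a cleaner combinatorial statement. Let $C$ denote the number of distinct unoriented classes in $\{\pm[c] : c \in M\}$, so that $BP = |M| - C$. Combined with the identity $|M| = D + P + Z - 1$ from Lemma~\ref{lemma:cell dimension}, the desired inequality $BP + 2 \leq D + P$ is equivalent to
\[
C \geq Z + 1,
\]
i.e., the curves of $M$ take on at least $Z + 1$ distinct unoriented homology classes. This is the statement I plan to prove.

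The plan is to argue by induction on $|M|$. The base case $|M| = 1$ is a single nonseparating curve in $S_g$ for $g \geq 2$, whose complement is connected of positive genus $g - 1$, so $(C, P, Z) = (1, 1, 0)$ and the inequality holds with equality. For the inductive step I would work with the dual graph $G$ of $M$, whose vertices are the components of $S - M$ and whose edges are the curves of $M$. Because $S$ is connected and every curve of $M$ is nonseparating, $G$ is connected and bridgeless; its first Betti number equals $D$, and two curves $c, c' \in M$ represent the same $\pm$-class precisely when $\{c, c'\}$ forms a 2-edge-cut of $G$. If I can locate some $c' \in M$ whose $\pm$-class is represented only by itself, then removing $c'$ gives $C' = C - 1$ and $BP' = BP$; a direct case analysis according to whether $c'$ is a loop of $G$ (so $D' = D - 1$) or a non-loop ($D' = D$) and according to the genera of the components incident to $c'$ shows that $D' + P' \leq D + P$ in each of the five resulting cases, closing the induction via
\[
BP + 2 = BP' + 2 \leq D' + P' \leq D + P.
\]

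The main obstacle is the case when every $\pm$-class in $M$ has multiplicity at least two, so no curve with uniquely represented class exists. The plan then is to remove an entire bounding pair $\{c', c''\}$ with $[c'] = \pm[c'']$ at once. The crucial topological input here is that the cobounded subsurface $R$ with $\partial R = c' \cup c''$ has genus at least one, since two disjoint non-isotopic homologous nonseparating curves cannot bound an annulus. This positive-genus cobounded region contributes at least one positive-genus component of $S - M$---directly if $R$ contains no other curves of $M$ in its interior, or through the further subdivision of $R$ by interior curves of $M$ otherwise---providing the margin of at least one positive-genus component needed to absorb the drop in $BP$. Making this case analysis precise, especially when multiple bounding pairs interact and their cobounded subsurfaces are further subdivided by other curves of $M$, is the central technical step of the argument.
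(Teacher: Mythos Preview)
Your translation of $BP + 2 \leq D + P$ into $C \geq Z + 1$ is correct and illuminating, and your Case~1 (a curve with uniquely represented class exists; remove it) goes through exactly as you describe: the five sub-cases all give $D' + P' \leq D + P$ while $BP' = BP$, so the induction closes. Your overall scheme---induct on $|M|$ and delete curves from $M$---is genuinely different from the paper's proof, which instead inducts on the genus $g$. The paper locates an \emph{innermost} bounding pair $\{c,d\}$, takes the side $S_1$ of $S-(c\cup d)$ containing no other bounding pairs of $M$, and replaces $S_1$ by an annulus, producing a surface $S'$ of strictly smaller genus carrying an induced multicurve $M'$. The ``innermost'' hypothesis forces $BP' = BP - 1$, and the replacement forces $D' + P' + 1 \leq D + P$ (either a positive-genus component of $S-M$ is deleted, or a homologically independent curve inside $S_1$ is lost).

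Your Case~2, however, has a real gap beyond missing bookkeeping. First, the assertion that the positive-genus cobounded region $R$ ``contributes at least one positive-genus component of $S - M$'' is false as stated: a genus-$1$ region $R$ cut by a single interior nonseparating curve of $M$ becomes a single four-holed sphere. Second, removing an \emph{arbitrary} bounding pair need not yield enough drop in $D + P$ to close your induction. Take $M = \{a_1,a_2,a_3,b_1,b_2\}$ in $S_8$ with $[a_1]=[a_2]=[a_3]$ and $[b_1]=[b_2]$ distinct nonzero classes, arranged so that $S_8 - M$ has exactly one genus-$0$ component and three of positive genus; then $BP = 3$ and $D + P = 5$, so the lemma holds with equality. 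Removing $\{a_1,a_2\}$ from the multiplicity-$3$ class gives $BP' = BP - 2$ but only $D' + P' = D + P - 1$, so the inductive bound $BP' + 2 \leq D' + P'$ yields merely $BP + 2 \leq D + P + 1$. By contrast, removing $\{b_1,b_2\}$, or removing the single curve $a_2$ (the unique edge of the dual graph with both endpoints of positive genus), does succeed. A choice must therefore be made, and making the correct choice in general is exactly where something like the paper's ``innermost'' criterion has to enter; your stated topological input (the positive genus of $R$) is necessary but not sufficient on its own.
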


\begin{proof}

We start with an observation.  For any multicurve in $S_g$ with $g
\geq 2$, it is straightforward to see that $D+P \geq 2$ (recall that multicurves are nonempty).

We now proceed by induction on genus, with the base case $g=2$.  In that
case, $BP = 0$ (there are no bounding pairs in $S_2$).  Since $D+P
\geq 2$, we are done in this case.

Now assume $g \geq 3$, and that the lemma is true for all genera
between 2 and $g-1$, inclusive.  Let $M$ be a multicurve in
$S=S_g$.  If there are no bounding pairs, then, as above, there is
nothing to do.  So assume $BP>0$.

We can find an ``innermost bounding pair'' of $M$, by which we mean a
bounding pair $\{c,d\} \subseteq M$ so that one component of $S-(c
\cup d)$ contains no other bounding pairs of $M$.  For example, in
Figure~\ref{bppic}, the bounding pairs $\{a_1,a_2\}$, $\{a_2, a_3\}$, and
$\{c_1,c_2\}$ are innermost, whereas the bounding pairs $\{a_1,a_3\}$
and $\{b_1,b_2\}$ are not innermost.

When there is at least one bounding pair, then there is an innermost one: if
there is a bounding pair on one side of a given bounding pair, then
then the new bounding pair bounds a strictly smaller subsurface (as
measured by Euler characteristic), and so the process of finding
increasingly innermore bounding pairs must terminate.

Let $\{c,d\}$ be an innermost bounding pair of $M$, and let $S_1$ be a
component of $S-(c \cup d)$ that contains no bounding pairs of $M$.
We obtain a new surface $S'$ from $S$ by replacing $S_1$ with an
annulus.  There is an induced multicurve $M'$ on $S'$, after
removing the image of either $c$ or $d$ (they become homotopic by
construction).

We claim that the genus of $S'$ lies in $[2,g-1]$.  Indeed, the
closures of $S_1$ and $S-S_1$ have two boundary components each, and
so if either had genus zero, we would see that $c$ and $d$ were
homotopic.

By induction, $M'$ satisfies
\[ BP' + 2 \leq D' + P' \]
where $BP'$, $D'$, and $P'$ are defined are defined for
$M'$ in the same way as
$BP$, $D$, and $P$ for $M$. 
Since $\{c,d\}$ was innermost in $S$, we have
\[ BP' = BP-1. \]
Also we have
\[ D' + P' + 1 \leq D + P \]
because when we replace $S_1$ with the annulus, we are either deleting
a positive genus component of $S-M$, or we see that there is at least
one nonseparating curve in $S_1$, which is automatically homologically
independent of the other curves of $M$.  In the
latter case, replacing $S_1$ by an annulus reduces $D$.
Thus, we have
\[ BP+2 = BP' + 3 \leq D'+P' + 1 \leq D+P \]
and the proof is complete.
\end{proof}

We now prove Proposition~\ref{prop:dim lemma}, which completes the
proof of Theorem~\ref{main:torelli}.

\begin{proof}[Proof of Proposition~\ref{prop:dim lemma}]

We proceed by strong induction.  Assume that Proposition~\ref{prop:dim lemma}
holds for genera between 2 and $g-1$ inclusive (if $g=2$ this is an
empty assumption).  We want to prove the following inequality:
\[ \cd (\Stab_{\I(S_g)}(M)) + B(M) \leq 3g-5. \]
We can replace $B(M)$ with $|M|-D$ (Lemma~\ref{lemma:cell dimension}).
Applying Lemma~\ref{lemma:stab bound 2}, it is enough to show that
\[ (3g-3-P-|M|+BP) + (|M|-D) \leq 3g-5.  \]
But this inequality is equivalent to the inequality $BP+2 \leq D+P$,
and so an application of Lemma~\ref{lemma:bpdp} completes the proof.
\end{proof}


\subsection{Stabilizer dimensions for \boldmath$\K(S_g)$}
\label{section:kg}

Throughout this section we again fix some $g \geq 2$ and some multicurve $M$
in $S_g$ consisting entirely of nonseparating curves.

We will require the analogue of Theorem~\ref{vautaw} for
$\K(S_g)$, recently proven by the authors \cite{multik}.

\begin{theorem}
\label{thm:multitwist kg}
For $M$ as above, we have $G(M) \cap \K(S_g) = 1$.
\end{theorem}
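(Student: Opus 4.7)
By Vautaw's Theorem~\ref{vautaw}, the group $G(M)\cap\I(S_g)$ is free abelian, generated by bounding pair maps $T_cT_d^{-1}$ for pairs of homologous curves $c,d\in M$. Since $\K(S_g)\subseteq\I(S_g)$, it suffices to prove that no nontrivial product of such bounding pair maps lies in $\K(S_g)$. The genus $2$ case is immediate: by an Euler characteristic count, if $\{c,d\}\subset M$ were a bounding pair, one component of $S_2\setminus(c\cup d)$ would be an annulus, forcing $c$ and $d$ to be isotopic. Hence $M$ supports no bounding pair and $G(M)\cap\I(S_2)=1$.

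For $g\geq 3$ I would invoke Johnson's deep theorem $\N_2(S_g)=\K(S_g)$ cited in the Introduction, which identifies $\K(S_g)$ with the kernel of the Johnson homomorphism $\tau\colon\I(S_g)\to(\wedge^3 H)/H$, where $H=H_1(S_g,\Z)$. The statement then reduces to showing that $\tau$ is injective on $G(M)\cap\I(S_g)$. On a generator, Johnson's formula reads
\[ \tau(T_cT_d^{-1}) \;=\; [c]\wedge\omega_R \;\in\;(\wedge^3 H)/H, \]
where $R$ is one component of $S_g\setminus(c\cup d)$ and $\omega_R\in\wedge^2 H$ is its symplectic class (the symplectic form of $H_1(R)$ restricted to a symplectic basis of that subsurface).

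The principal obstacle is thus a linear independence verification for these images. Group the curves of $M$ by their homology class in $H$; within each class realized by $k$ curves of $M$, fix a reference curve and take the $k-1$ differences with it as generators, recovering a free basis of $G(M)\cap\I(S_g)$ of rank $BP$ from Theorem~\ref{vautaw}. For a nontrivial combination $\phi$, the image $\tau(\phi)$ sorts into summands $\alpha_j\wedge\beta_j$ indexed by the distinct homology classes $\alpha_j$ of $M$ (with $\beta_j\in\wedge^2 H$ built as an integer combination of symplectic classes of the various subsurfaces cobounded by the bounding pairs in that class). Nonvanishing of $\tau(\phi)$ then reduces to the claim that no nontrivial such sum lies in $H\wedge\omega_0\subseteq\wedge^3 H$, where $\omega_0$ is the total symplectic form (this subspace is precisely the image of $H\hookrightarrow\wedge^3 H$).

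I would read this off from the dual graph of $M$---vertices the components of $S_g\setminus M$, edges the curves of $M$---by recording, for each subsurface $R$ occurring in Johnson's formula, the precise subcollection of components of $S_g\setminus M$ that assembles $R$. The incidence data of this graph converts the linear independence claim into a combinatorial statement that I would handle either by a direct rank count or by induction on $|M|$, peeling off one curve of $M$ at a time and reducing to a multicurve with fewer curves on a (possibly cut-open) subsurface of smaller genus. The base of the induction is controlled by the genus $2$ argument above.
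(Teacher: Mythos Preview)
The paper does not actually prove this statement: it is quoted as a result ``recently proven by the authors'' with a citation to \cite{multik}, and no argument is given here. So there is no in-paper proof to compare your proposal against.

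Your strategy---reduce to injectivity of the Johnson homomorphism $\tau$ on $G(M)\cap\I(S_g)$ using Johnson's identification $\K(S_g)=\ker\tau$ and his formula $\tau(T_cT_d^{-1})=[c]\wedge\omega_R$---is the natural one, and is essentially how such results are proved. The genus~$2$ paragraph is correct and matches the observation used elsewhere in the paper (proof of Lemma~\ref{lemma:bpdp}).

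That said, what you have written is a plan, not a proof. The entire content of the theorem lies in the linear independence of the images $\tau(T_{c_i}T_{d_i}^{-1})$ in $(\wedge^3 H)/H$, and you acknowledge this is the ``principal obstacle'' but then only gesture at it: ``I would handle either by a direct rank count or by induction on $|M|$.'' Neither is carried out. Two specific concerns:
\begin{itemize}
\item The quotient by $H\hookrightarrow\wedge^3 H$ (via $h\mapsto h\wedge\omega_0$) is exactly where the subtlety lives. For a single bounding pair one checks directly that $[c]\wedge\omega_R\notin H\wedge\omega_0$, but for a general integer combination across several homology classes $\alpha_j$ of curves in $M$, the terms $\alpha_j\wedge\beta_j$ can interact, and the $\omega_R$'s for overlapping subsurfaces $R$ are not independent. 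The dual-graph bookkeeping you allude to is real work and needs to be done.
\item Your proposed induction ``peeling off one curve of $M$ at a time and reducing to a \dots (possibly cut-open) subsurface'' does not straightforwardly close up: the Johnson homomorphism you are using is defined for closed surfaces (or one boundary component), and cutting along a curve of $M$ typically produces a surface with several boundary components, where the formalism changes. The genus~$2$ base case you cite is for the \emph{closed} surface and does not serve as a base for that kind of induction.
\end{itemize}
In short: right idea, but the heart of the argument is missing.
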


For once punctured surfaces, we will only need the following much
weaker statement, proven by Johnson \cite[Lemmas 4A and 4B]{djabelian}.

\begin{lemma}
\label{lemma:no bps}
The group $\K(S_{g,1})$ contains no nontrivial powers of any bounding
pair maps.
\end{lemma}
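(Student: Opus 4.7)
The plan is to invoke Johnson's homomorphism
\[ \tau \colon \I(S_{g,1}) \longrightarrow \bigwedge\nolimits^3 H_1(S_g;\Z) \]
together with Johnson's theorem that $\K(S_{g,1}) = \ker(\tau)$. Since the target is a finitely generated free abelian group, it suffices to show that, for every bounding pair $\{c,d\}$ of disjoint nonseparating simple closed curves in $S_{g,1}$ with $[c]=[d]$, the image $\tau(T_cT_d^{-1})$ is nonzero: for then $\tau\bigl((T_cT_d^{-1})^k\bigr) = k\cdot \tau(T_cT_d^{-1}) \neq 0$ for all $k \neq 0$, and no nontrivial power of $T_cT_d^{-1}$ can lie in $\ker(\tau) = \K(S_{g,1})$.

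For the nonvanishing, I would cut $S_{g,1}$ along $c \cup d$ and let $\Sigma$ denote one of the resulting pieces. We may assume the genus $h$ of $\Sigma$ satisfies $h \geq 1$, since if $h=0$ then $c$ and $d$ cobound an annulus, are isotopic, and $T_cT_d^{-1}$ is trivial. Choose a symplectic basis $a_1,b_1,\dots,a_h,b_h$ for the subspace of $H_1(S_g;\Z)$ carried by the interior of $\Sigma$; then $[c]$ is linearly independent from this collection, and $[c],a_1,b_1,\dots,a_h,b_h$ extends to a symplectic basis of $H_1(S_g;\Z)$. The key input from Johnson's paper is the explicit formula
\[ \tau(T_cT_d^{-1}) \;=\; [c]\wedge \omega_\Sigma, \qquad \omega_\Sigma \;=\; \sum_{i=1}^h a_i \wedge b_i, \]
valued in $\bigwedge^3 H_1(S_g;\Z)$.

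With this formula the proof finishes immediately: in the standard basis for $\bigwedge^3 H_1(S_g;\Z)$ induced by our symplectic basis, the element $[c] \wedge \omega_\Sigma = \sum_{i=1}^h [c] \wedge a_i \wedge b_i$ is a sum of distinct basis vectors, hence a primitive, and in particular nonzero, element. The only genuinely nontrivial step is Johnson's computation of $\tau$ on bounding pair maps, which is carried out by analyzing the action of $T_cT_d^{-1}$ on the second nilpotent quotient $\pi/\pi^3$ and comparing it to the standard bases used to define $\tau$. That is the main obstacle, and I would import it verbatim from Johnson rather than reprove it here, which is exactly why the excerpt attributes the whole statement to him.
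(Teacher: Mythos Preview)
Your argument is correct and is exactly the route the paper intends: the paper gives no proof here but simply cites Johnson \cite[Lemmas~4A and~4B]{djabelian}, which are precisely the computation $\tau(T_cT_d^{-1}) = [c]\wedge\omega_\Sigma \neq 0$ on bounding pair maps and the vanishing of $\tau$ on separating Dehn twists. One small point: you invoke the deep identification $\K(S_{g,1}) = \ker\tau$, but only the elementary inclusion $\K(S_{g,1}) \subseteq \ker\tau$ is actually used, and that follows immediately from Lemma~4B (the generators of $\K$ are separating twists, hence lie in $\ker\tau$); there is no need to appeal to the much harder theorem from \cite{dj2}.
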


As before, let $S_{g,p}$ denote a surface of genus $g$ with $p$
punctures.  For any surface $S=S_{g,p}$, we denote by $\K(S)$ the
subgroup of $\Mod(S)$ generated by Dehn twists about separating
curves.

\begin{lemma}
\label{lemma:kg bes}
Let $G < \K(S_{g,p})$ with $g \geq 2$.  We have
\[ \cd(G) \leq \cd(\K(S_g)) + p. \]  
\end{lemma}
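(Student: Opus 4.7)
The plan is to derive this lemma as a direct consequence of Lemma~\ref{lem:cd bes kg} by checking that any subgroup $G < \K(S_{g,p})$ automatically satisfies the hypotheses of that lemma with $H = \K(S_g)$. Essentially, Lemma~\ref{lem:cd bes kg} was designed to handle exactly this situation: it trades a hypothesis about the absence of powers of bounding pair maps for the improved bound $\cd(G) \leq \cd(H) + p$ (rather than the $\cd(H) + p + 1$ one would get from Corollary~\ref{cor:cd bes}), and Johnson's theorem (Lemma~\ref{lemma:no bps}) is precisely what supplies that hypothesis in the Johnson kernel setting.

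First I would examine the two forgetful maps appearing in Lemma~\ref{lem:cd bes kg}: the map $F \colon \PMod(S_{g,p}) \to \Mod(S_g)$ that forgets all punctures, and the map $F_1 \colon \PMod(S_{g,p}) \to \Mod(S_{g,1})$ that forgets all but one of them. I would verify that each of these sends $\K(S_{g,p})$ into the corresponding Johnson kernel. The key point is topological: filling in a puncture preserves the two-sided nature of a separating curve, so a separating simple closed curve $c$ in $S_{g,p}$ either remains essential and separating in the smaller surface or becomes nullhomotopic (when one of its complementary components was a disk with only the filled-in punctures). In either case the Dehn twist $T_c$ descends to a separating twist or to the identity. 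Hence $F(G) < \K(S_g)$ and $F_1(G) < \K(S_{g,1})$.

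Second, I would invoke Johnson's Lemma~\ref{lemma:no bps}, which states that $\K(S_{g,1})$ contains no nontrivial powers of bounding pair maps. Combined with the inclusion $F_1(G) < \K(S_{g,1})$ just established, this immediately guarantees that $F_1(G)$ itself contains no nontrivial powers of bounding pair maps, which is precisely the additional hypothesis demanded by Lemma~\ref{lem:cd bes kg}.

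With both hypotheses verified, a direct application of Lemma~\ref{lem:cd bes kg} with $H = \K(S_g)$ yields the desired bound $\cd(G) \leq \cd(\K(S_g)) + p$. The one point that requires genuine attention is the first step, checking that separating twists descend to elements of $\K$ under both forgetful maps; everything else is a formal invocation of machinery already in place.
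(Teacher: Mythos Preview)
Your proposal is correct and follows essentially the same route as the paper's own proof: verify that the forgetful maps carry $\K(S_{g,p})$ into $\K(S_{g,1})$ and $\K(S_g)$ (because separating curves stay separating or become trivial), invoke Johnson's Lemma~\ref{lemma:no bps} to rule out powers of bounding pair maps in $F_1(G)$, and then apply Lemma~\ref{lem:cd bes kg} with $H=\K(S_g)$. The only small point the paper makes explicit that you leave implicit is that $\K(S_{g,p}) < \PMod(S_{g,p})$, since separating twists do not permute punctures; this is needed for $F$ and $F_1$ to be defined on $G$, but it is immediate.
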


\begin{proof}

It suffices to show that Lemma~\ref{lem:cd bes kg} applies.  First,
since the generators for $\K(S_{g,p})$ do not permute punctures, we
have that $G < \PMod(S_{g,p})$.  Also, since separating curves remain
separating when punctures are forgotten, we see that the images of $G$
in $\Mod(S_{g,1})$ and $\Mod(S_g)$ are subgroups of $\K(S_{g,1})$ and
$\K(S_g)$, respectively.  By Lemma~\ref{lemma:no bps}, $\K(S_{g,1})$
contains no bounding pair maps.  Thus, Lemma~\ref{lem:cd bes kg}
applies and immediately gives the desired statement.
\end{proof}

We recall the notation from the start of Section~\ref{section:proof of
  dim lemma}.  We label the positive genus components of $S-M$ by
$S_1, \dots, S_P$, and the genus zero components by
$S_{P+1},\dots,S_{P+Z}$, and we say that $S_i$ is a surface of genus
$g_i$ with $p_i$ punctures.  We have the following analogue of
Lemma~\ref{lemma:stab bound}.

\begin{lemma}
\label{lemma:stab bound kg}
Assume Proposition~\ref{prop:dim lemma kg} for all genera between 2 and $g-1$, inclusive.
In the notation of the preceding paragraph, we have
\[ \cd(\Stab_{\K(S_g)}(M)) \leq \sum_{i=1}^P (2g_i+p_i-3) + \sum_{i=P+1}^{P+Z}
(p_i-3). \]
\end{lemma}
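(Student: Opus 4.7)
The plan is to mirror the proof of Lemma~\ref{lemma:stab bound} line by line, making two substitutions: Theorem~\ref{thm:multitwist kg} in place of Theorem~\ref{vautaw} (to eliminate the $\Z^{BP}$ contribution to the kernel), and Johnson kernels in place of Torelli groups on the subsurfaces $S_i$. Specifically, the Birman--Lubotzky--McCarthy sequence together with Ivanov's result gives
\[ 1 \to G(M) \cap \K(S) \to \Stab_{\K(S)}(M) \to \PMod(S-M), \]
and Theorem~\ref{thm:multitwist kg} says the kernel is trivial. Thus $\Stab_{\K(S)}(M)$ embeds in $\PMod(S-M) = \prod_i \PMod(S_i)$; letting $G_i$ denote the projected image and applying Facts~\ref{fact:cd ses} and~\ref{fact:cd subgp} reduces the problem to showing $\cd(G_i) \leq p_i-3$ when $g_i=0$ and $\cd(G_i)\leq 2g_i+p_i-3$ when $g_i\geq 1$.

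The genus zero bound is immediate from Corollary~\ref{cor:cd bes g0} and monotonicity. For $g_i=1$, the Observation used in the proof of Lemma~\ref{lemma:stab bound} still applies verbatim because $\K(S)\subset\I(S)$: after forgetting all but one puncture, the image of $G_i$ lies in $\I(S_{1,1})=1$, and Corollary~\ref{cor:cd bes g1} gives $\cd(G_i)\leq p_i-1 = 2g_i+p_i-3$. The interesting case is $g_i\geq 2$. Here I would first strengthen the Observation to $G_i \subset \K(S_i)$ (rather than merely $\I(S_i)$), after which Lemma~\ref{lemma:kg bes} applies and yields
\[ \cd(G_i) \leq \cd(\K(S_{g_i})) + p_i \leq (2g_i-3)+p_i = 2g_i+p_i-3, \]
where the second inequality uses the inductive hypothesis of Proposition~\ref{prop:dim lemma kg} at genus $g_i<g$ together with Quillen's spectral sequence applied to the action of $\K(S_{g_i})$ on the contractible complex $\B(S_{g_i})$.

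The main obstacle is precisely the upgraded Observation $G_i\subset\K(S_i)$. I expect to prove it via the characterization $\K = \N_2$---the kernel of the action on $\pi_1/\pi_1^2$---combined with naturality of this nilpotent quotient under the subsurface inclusion $S_i \hookrightarrow S$; the argument should parallel the Torelli observation, just one step deeper in the lower central series. Summing the three bounds across the components of $S-M$ then yields the claimed inequality.
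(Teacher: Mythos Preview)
Your plan matches the paper's proof almost line for line: the same Birman--Lubotzky--McCarthy sequence restricted to $\K$, the same use of Theorem~\ref{thm:multitwist kg} to kill the kernel, the same case split on $g_i$, and the same appeal to Lemma~\ref{lemma:kg bes} together with the inductive hypothesis when $g_i\geq 2$.

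The one substantive difference is how you establish $G_i\subset\K(S_i)$ for $g_i\geq 2$. You reach for Johnson's deep theorem $\K=\N_2$ and naturality of the two-step nilpotent quotient under the inclusion $S_i\hookrightarrow S$. The paper instead uses the elementary observation that a separating curve of $S$ lying in $S_i$ remains separating in $S_i$, so separating Dehn twists map to separating Dehn twists. Your route is heavier and faces genuine technical hurdles: Johnson's identification $\K=\N_2$ is stated for closed surfaces while $S_i$ is punctured, and the naturality you need is delicate because $\pi_1(S_i)$ is free whereas $\pi_1(S)$ is a closed surface group, so the induced map on two-step nilpotent quotients has no reason to be injective. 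The paper's observation sidesteps all of this.
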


\begin{proof}

The proof is essentially the same as the proof of
Lemma~\ref{lemma:stab bound}, with lemmas about $\I(S_g)$ replaced by
lemmas about $\K(S_g)$.

As in the proof of Lemma~\ref{lemma:stab bound}, we cut $S$ along $M$
and obtain
\[ 1 \to G(M) \cap \K(S) \to \Stab_{\K(S)}(M) \to \PMod(S-M) \]
where $G(M)$ is the group generated by Dehn twists in the curves of
$M$.  By Theorem~\ref{thm:multitwist kg}, the term $G(M) \cap \K(S)$ is
trivial, and so $\Stab_{\K(S)}(M)$ is isomorphic to its image in
$\PMod(S-M)$.  So we must show that this image has cohomological
dimension bounded above by
\[ \sum_{i=1}^P (2g_i+p_i-3) + \sum_{i=1}^Z (p_i-3). \]
As in the proof of Lemma~\ref{lemma:stab bound}, we denote by $G_i$
the image of $\Stab_{\K(S)}(M)$ in $\PMod(S_i)$.

Like before, it suffices to show that if $g_i=0$, then $\cd(G_i) \leq
p_i-3$, and if $g_i > 0$, then $\cd(G_i) \leq 2g_i+p_i-3$.

As in Lemma~\ref{lemma:stab bound}, the genus zero case follows from
Corollary~\ref{cor:cd bes g0}.  For $g_i=1$, note that $2g_i+p_i-3 =
3g_i+p_i-4 = p_i-1$, and so, by Fact~\ref{fact:cd subgp} (monotonicity), the upper
bound of $p_i-1$ obtained from Corollary~\ref{cor:cd bes g1} in the
proof of Proposition~\ref{prop:dim lemma kg} is sufficient.

It remains to deal with the case $g_i \geq 2$.  Let $\hat
S_i$ be the closed surface obtained from $S_i$ by forgetting all of
the punctures.  Since separating curves in $S$ remain separating in
$S_i$, we have $G_i < \K(S_i)$, and so Lemma~\ref{lemma:kg bes} gives
that $\cd(G_i) \leq \cd(\K(\hat S_i)) + p_i$.  By induction,
$\cd(\K(\hat S_i)) \leq 2g_i-3$, and this completes the proof.
\end{proof}

We can simplify the upper bound of Lemma~\ref{lemma:stab bound kg} as
follows.  The proof is analogous to the argument for Lemma~\ref{lemma:stab bound 2}, and is left to the reader.

\begin{lemma}
\label{lemma:stab bound kg 2}
With the assumptions and notation of Lemma~\ref{lemma:stab bound kg},
we have
\[ \cd (\Stab_{\K(S_g)}(M)) \leq 2g-3+D-|M|. \]
\end{lemma}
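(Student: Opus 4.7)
The plan is to reduce the statement to a purely algebraic manipulation of the upper bound provided by Lemma~\ref{lemma:stab bound kg}, exactly parallel to how Lemma~\ref{lemma:stab bound 2} is derived from Lemma~\ref{lemma:stab bound}. Starting from
\[
\cd(\Stab_{\K(S_g)}(M)) \leq \sum_{i=1}^P (2g_i+p_i-3) + \sum_{i=P+1}^{P+Z}(p_i-3),
\]
it suffices to verify that the right-hand side equals $2g-3+D-|M|$.

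First, I would rewrite the sum by separating the genus and puncture contributions. Since $g_i=0$ for $i>P$, the genus sum is $2\sum_{i=1}^{P+Z} g_i = 2\sum_{i=1}^P g_i$. The puncture sum is $\sum_{i=1}^{P+Z} p_i$, and the constant term is $-3(P+Z)$. This gives
\[
\sum_{i=1}^P (2g_i+p_i-3) + \sum_{i=P+1}^{P+Z}(p_i-3) = 2\sum_{i=1}^P g_i + \sum_{i=1}^{P+Z} p_i - 3(P+Z).
\]

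Next, I would substitute the three standard identities already used in the proof of Lemma~\ref{lemma:stab bound 2}: $\sum p_i = 2|M|$ (each curve of $M$ contributes two punctures in $S-M$), $\sum g_i = g-D$ (since the span of $M$ in $H_1(S,\R)$ has dimension $D$, and cutting along $M$ reduces total genus by $D$), and $D+P+Z = |M|+1$ from Lemma~\ref{lemma:cell dimension}. Plugging these in yields
\[
2(g-D) + 2|M| - 3(|M|+1-D) = 2g + D - |M| - 3,
\]
which is the desired upper bound. There is no real obstacle here since the computation is routine; the only thing to be careful about is correctly using Lemma~\ref{lemma:cell dimension} in the form $P+Z = |M|+1-D$ rather than the equivalent $B = N-1$ expression, so that $D$ appears in the final answer with the right sign.
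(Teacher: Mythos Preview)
Your proposal is correct and is exactly the kind of computation the paper has in mind; indeed, the paper leaves this proof to the reader, pointing to the analogous manipulation in Lemma~\ref{lemma:stab bound 2}. Your use of $\sum p_i=2|M|$, $\sum g_i=g-D$, and $P+Z=|M|+1-D$ from Lemma~\ref{lemma:cell dimension} matches that argument precisely, and the arithmetic checks out.
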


We are now ready to prove the upper bound for Theorem~\ref{main:kg}.

\begin{proof}[Proof of Proposition~\ref{prop:dim lemma kg}]

We proceed as in the proof of Proposition~\ref{prop:dim lemma}.  Assume by
induction that Proposition~\ref{prop:dim lemma kg} is true for all genera between
2 and $g-1$, inclusive.  The goal is to show the following:
\[ \cd (\Stab_{\K(S_g)}(M)) + B(M) \leq 2g-3. \]
By Lemma~\ref{lemma:stab bound kg 2} and Lemma~\ref{lemma:cell dimension} it suffices
to show
\[ (2g-3+D-|M|) + (|M|-D) \leq 2g-3.  \]
But this inequality simplifies to $0 \leq 0$.
\end{proof}

\p{Remark.} We point out that in the case of $\K(S_g)$ we did not need
an analog of Lemma~\ref{lemma:bpdp}.  In this sense, the upper bound
for Theorem~\ref{main:kg} is slightly easier than that for
Theorem~\ref{main:torelli}.  Also recall that the lower bound was also
easier for Theorem~\ref{main:kg}.


\section{The Mess description of $\I(S_2)$}
\label{section:genus 2}

We now turn to the proof of Theorem~\ref{main:mess}.  As
discussed in the introduction, Theorem~\ref{main:torelli}
implies that $\I(S_2)$ is a free group.  However, the statement of Theorem~\ref{main:mess} is much stronger, and the infinite generation is needed for the base case of Theorem~\ref{main:top}.

The proof breaks up into three parts: we first analyze the stabilizer
in $\I(S_2)$ of a pair of nonseparating curves, then we analyze the
stabilizer of a single nonseparating curve, and finally we piece these
together to get the desired description of $\I(S_2)$.  We can think of
these steps as the genus 0, genus 1, and genus 2 versions of the Mess theorem.

\p{Genus 0.} We start by investigating the stabilizer in $\I(S_2)$ of
a vertex of $\B(S_2)$ corresponding to a pair of disjoint
nonseparating curves.  Since the complement in $S_2$ has genus zero,
we think of this as a genus zero version of Theorem~\ref{main:mess}.

In the statement of the lemma, we say that a symplectic splitting of
$H_1(S_2,\Z)$ is \emph{compatible} with a pair of nonseparating curves
$\{a,b\}$ (or the associated homology classes) if the homology class
$[a]$ lies in one subspace determined by the splitting, and
$[b]$ lies in the other.

\begin{lemma}
\label{lem:2 pairs}
Let $M$ be the multicurve consisting of two nonseparating curves $a$
and $b$ in $S_2$.  The group $\Stab_{\I(S_2)}(M)$ is an infinitely
generated free group, with one Dehn twist generator for each
symplectic splitting compatible with $M$.
\end{lemma}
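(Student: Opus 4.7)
The plan is to cut $S_2$ along $a\cup b$ to produce the $4$-holed sphere $N\cong S_{0,4}$, and translate the lemma into a computation inside $\PMod(N)$. Two disjoint nonisotopic nonseparating curves in $S_2$ cannot be homologous (otherwise they would cobound a cylinder and hence be isotopic), so $[a]$ and $[b]$ are linearly independent in $H_1(S_2,\Z)$; consequently every nontrivial multitwist $T_a^iT_b^j$ acts nontrivially on $H_1$, so $G(M)\cap\I(S_2)=1$. The Birman--Lubotzky--McCarthy sequence then yields an injection $\Stab_{\I(S_2)}(M)\hookrightarrow\PMod(N)$, with image in the pure mapping class group because Torelli elements preserve the orientations of $a,b$ and hence fix each boundary circle of $N$ setwise. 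Furthermore, the Birman exact sequence for the map $S_{0,4}\to S_{0,3}$ forgetting one puncture, together with $\PMod(S_{0,3})=1$ and $\pi_1(S_{0,3})\cong F_2$, identifies $\PMod(N)\cong F_2$; so $\Stab_{\I(S_2)}(M)$ is automatically free, and the remaining work is to identify the correct subgroup.

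Every essential simple closed curve in $N$ is separating and partitions $\{a_+,a_-,b_+,b_-\}$ into two pairs, giving three partition types $P_a,P_b,P_c$ where $P_a=\{\{a_+,a_-\},\{b_+,b_-\}\}$. A direct topological check shows that curves of type $P_a$ extend to separating curves in $S_2$ and produce precisely the compatible symplectic splittings of the lemma, while curves of the other two types extend to nonseparating curves in $S_2$ with $H_1$-class $\pm([a]\pm[b])$. Consequently the Dehn twist $T_c$ about any type-$P_a$ curve is a separating twist, belongs to $\I(S_2)=\K(S_2)$ by Powell's theorem, and fixes $M$, so lies in $\Stab_{\I(S_2)}(M)$.

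To show that these twists generate the stabilizer freely with one generator per compatible splitting, I would define a winding homomorphism $\PMod(N)\to\Z$ whose kernel is the image of $\Stab_{\I(S_2)}(M)$. Given $\phi\in\PMod(N)$, choose any lift $\tilde\phi\in\Stab_{\Mod(S_2)}(M)$. Because $\phi$ acts trivially on $H_1(N)$, its lift $\tilde\phi_*$ fixes $[a],[b]$ and is the identity modulo $\langle[a],[b]\rangle$, so on a symplectic basis $\{[a],[b],[\alpha],[\beta]\}$ its only nontrivial entries are the four upper-right integers $x,y,z,w$. The $G(M)$-ambiguity in the lift changes $x,w$ by arbitrary integers while fixing $y,z$, and the symplectic condition $\tilde\phi_*\in\Sp(4,\Z)$ forces $y=z$. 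Hence $\phi\mapsto y$ is a well-defined homomorphism whose kernel equals the image of $\Stab_{\I(S_2)}(M)$. Computing this winding map on Dehn-twist generators of types $P_b$ and $P_c$ identifies it with the map $\pi_1(S_{0,3})\to\pi_1(S_{0,2})\cong\Z$ obtained by filling in the puncture corresponding to type $P_a$; its kernel is the normal closure of the loop around that puncture, which is the fundamental group of the associated infinite cyclic cover of $S_{0,3}$. By covering-space theory this kernel is a free group on a Schreier basis of conjugates of the chosen loop; translating back, these conjugates are Dehn twists about the distinct type-$P_a$ isotopy classes in $N$, which one checks are in natural bijection with compatible symplectic splittings of $H_1(S_2,\Z)$.

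The main obstacle is the ``single-invariant'' claim---that the two a priori independent off-diagonal entries $y,z$ always coincide, reducing the obstruction from two dimensions to one---which follows from a short symplectic linear-algebra calculation. A secondary point is the bijection between type-$P_a$ isotopy classes in $N$ and compatible splittings, which reduces to the observation that two simple closed curves in $N$ which are isotopic in $S_2$ are already isotopic in $N$.
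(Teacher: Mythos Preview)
Your approach is genuinely different from the paper's: the paper makes $\Stab_{\I(S_2)}(M)$ act on the tree whose vertices are type-$P_a$ curves (equivalently, arcs in $N$ joining $a_+$ to $a_-$), proves the quotient is the standard triangulation of $\R$ indexed by splittings, and reads off the free generating set from Bass--Serre theory with cyclic vertex stabilizers and trivial edge stabilizers. Your route---realizing the stabilizer as the kernel of an explicit surjection $\PMod(S_{0,4})\cong F_2\twoheadrightarrow\Z$ and writing down a Schreier basis of conjugates---is more algebraic and works in outline; the winding homomorphism and the verification $y=z$ are correct.

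The gap is in the final bijection. You assert a bijection between \emph{all} type-$P_a$ isotopy classes in $N$ and compatible symplectic splittings, and your proposed justification (``curves isotopic in $S_2$ are already isotopic in $N$'') would at best give injectivity of curves-in-$N$ into curves-in-$S_2$. The actual map from type-$P_a$ curves to splittings is far from injective: two separating curves in $S_2$ inducing the same splitting need only differ by an element of $\I(S_2)$, and that group is precisely the nontrivial one you are computing. Concretely, if $c$ is any type-$P_a$ curve and $f\in\Stab_{\I(S_2)}(M)$ does not fix $c$ (take $f=T_{c'}$ for another type-$P_a$ curve $c'$), then $c$ and $f(c)$ are nonisotopic type-$P_a$ curves giving the same splitting. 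What is true, and what your argument actually produces, is that the Schreier generators $v^nuv^{-n}$ correspond to Dehn twists $T_{T_q^n(c_0)}$ for one fixed type-$P_a$ curve $c_0$ and one fixed type-$P_b$ curve $q$. To finish you must show that \emph{this particular} $\Z$-family of curves hits each compatible splitting exactly once; this follows from the homology computation that $T_q$, being a twist about a curve of class $[a]\pm[b]$, shifts the splitting index $k$ (in the parametrization by cosets $([a']+k[b])+\langle[a]\rangle$) by exactly $\pm 1$. That computation, not the claimed bijection, is the missing step.
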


\begin{proof}

We cut $S_2$ along $a$ and $b$ and obtain a sphere $S'$ with 4
punctures.  Let $X$ be the flag complex with a vertex for each isotopy
class of curves in $S'$ that comes from a separating curve in $S_2$,
and an edge for each pair of isotopy classes with geometric
intersection number 4.

The complex $X$ is contractible: if we think of the vertices as
isotopy classes of arcs in $S'$ connecting the two punctures coming from $a$,
then edges correspond to disjointness of arcs, and the required
statement is given by Harer \cite[Theorem 1.6]{jh} (the argument of
\cite{ah3} also applies, with minor modifications).  We also see from
this point of view that $X$ is a graph (so it is a tree)---any three arcs in $S'$ that represent distinct separating curves must intersect.

The group $\Stab_{\I(S_2)}(M)$ acts on $X$.  The quotient has one
vertex for each homology splitting of $H_1(S_2,\Z)$ compatible with $M$;
in other words, if two separating curves $c$ and $c'$ are disjoint
from $M$ and induce the same homology splitting, then there is an
element of $\I(S_2)$ that fixes $a$ and $b$ and takes $c$ to $c'$.
To construct such a mapping class, cut one copy of $S$ along $c$ and
another copy of $S$ along $c'$, choose the unique maps on the
punctured tori that act trivially on homology, and then choose any
element of $\Mod(S_2)$ that restricts to the chosen maps on the
punctured tori (cf. \cite[Theorem 1A]{dj} and \cite[Lemma A.3]{ap}).

Our goal is to show that the quotient complex is contractible (a
tree).  Since vertex stabilizers are the Dehn twists about the
corresponding separating curves in $S_2$, and edge stabilizers are
trivial (Theorem~\ref{vautaw}, plus the fact that any two nonisotopic
separating curves in $S_2$ must intersect), the lemma will then follow
from the classical Bass--Serre theory of group actions on trees
\cite[Theorem 13]{trees}.

Let $([a],[a'],[b],[b'])$ be a symplectic basis for $H_1(S_2,\Z)$,
where $a$ and $b$ are the curves in the statement of the lemma (take
the algebraic intersection of $[a]$ and $[a']$, say, to be 1).  It
follows from elementary linear algebra that the splittings of
$H_1(S_2,\Z)$ compatible with $M$ are in bijection with the cosets
$([a']+k[b])+\langle[a]\rangle$, and hence are indexed by $\Z$.

We now claim that two vertices form an edge in the quotient complex if
and only if they are adjacent in $\Z$.  It will follow that the
quotient complex is isomorphic to the standard triangulation of $\R$.

If two vertices are adjacent in $\Z$, then one can draw disjoint arcs
in $S'$ corresponding to those vertices.  This is proven by drawing
the picture: if $\alpha_0$ is an arc in $S'$ corresponding to the
integer $0$, then $T_c^k(\alpha_0)$ is an arc corresponding to $k$,
where $T_c$ is the Dehn twist about any fixed curve in $S'$ that
comes from a curve in $S_2$, represents the class $[a]+[b]$, and
intersects $\alpha_0$ in one point; see Figure~\ref{akpic} for an
illustration.  One can check that $T_c^k(\alpha_0)$ and
$T_c^{k+1}(\alpha_0)$ are disjoint, and so the vertices corresponding
to $k$ and $k+1$ are connected by an edge in the quotient.

If two vertices labelled by $k,k' \in \Z$ are connected in the
quotient, this means that there are corresponding arcs in $S'$ that
are disjoint, and hence corresponding curves in $S_2$ that are
disjoint.  As above, these curves represent primitive homology classes
$[a']+k[b]+j[a]$ and $[a']+k'[b]+j'[a]$.  Since the curves are
disjoint, we have $j=j'$ (otherwise, the algebraic intersection is
nonzero).  The curves in $S_2$ span a maximal isotropic subspace
(i.e. a maximal subspace where the intersection form is trivial); in
particular, they span the subspace spanned by $[a']$ and $[b]$.
Hence, the determinant of the pair of vectors $[a']+k[b]$ and
$[a']+k'[b]$ must be $\pm 1$, and we see $k=k'\pm 1$, which is what we
wanted to show.
\end{proof}

\begin{figure}[htb]
\psfrag{a0}{$\alpha_0$}
\psfrag{a2}{$\alpha_2$}
\psfrag{a3}{$\alpha_3$}
\psfrag{c}{$c$}
\centerline{\includegraphics[scale=.5]{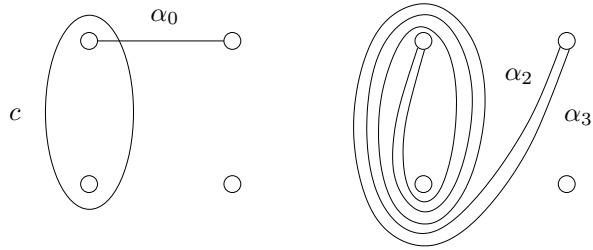}}
\caption{The arcs $\alpha_0$, $\alpha_2$, and $\alpha_3$ from the
  proof of Lemma~\ref{lem:2 pairs}.}
\label{akpic}
\end{figure}

\p{Genus 1.} We now need to understand the stabilizer in $\I(S_2)$ of
a vertex of $\B(S_2)$ corresponding to a single nonseparating curve
$c$ in the class $x$.  This is the genus 1 version of
Theorem~\ref{main:mess}.  We say that a homology splitting is \emph{compatible with $x$} if $x$ lies in one component of the splitting.

\begin{lemma}
\label{lem:g1 pair}
The group $\Stab_{\I(S_2)}(c)$ is an infinitely generated free group,
with one Dehn twist generator for each symplectic splitting compatible
with $x$.
\end{lemma}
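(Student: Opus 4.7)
The plan is to mirror the Bass--Serre approach of Lemma~\ref{lem:2 pairs}. First, cut $S_2$ along $c$ to obtain $S'$, a genus-$1$ surface with two punctures $p_1, p_2$ coming from the two sides of $c$, and build a complex $Y$ whose vertices are isotopy classes of separating simple closed curves in $S_2$ disjoint from $c$. Via the ``boundary of a regular neighborhood'' dual construction, these vertices are in bijection with isotopy classes of arcs in $S'$ connecting $p_1$ to $p_2$. Take an edge in $Y$ between two vertices precisely when their dual arcs can be realized disjointly in $S'$. The group $\Stab_{\I(S_2)}(c)$ then acts on $Y$, and the goal is to extract the desired description via Bass--Serre theory.

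I would next verify that $Y$ is contractible, using a Harer-type theorem for arc complexes, and compute the stabilizers. The vertex stabilizer of a curve $d$ is $\langle T_d\rangle \cong \Z$: indeed $\Stab_{\I(S_2)}(d)=\langle T_d\rangle$, since cutting $S_2$ along the separating curve $d$ yields two one-holed tori, and the Torelli subgroup of the mapping class group of a one-holed torus is trivial (the map to $\SL(2,\Z)$ is an isomorphism); and because $c$ and $d$ are disjoint, $T_d$ fixes $c$, so $\Stab_{\I(S_2)}(\{c,d\})=\langle T_d\rangle$ as well. Edge stabilizers are trivial by the same argument as in Lemma~\ref{lem:2 pairs}: a direct cutting argument on $S'$ shows that no two distinct separating curves in $S_2$ disjoint from $c$ are themselves disjoint in $S_2$ (two such curves would bound one-holed tori in $S'$, and neither nesting nor disjointness of these subsurfaces is compatible with $S'$ having genus $1$), and combining this with Theorem~\ref{vautaw} forces triviality.

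Finally, I would identify the quotient $Y/\Stab_{\I(S_2)}(c)$. Two vertices lie in the same orbit if and only if the corresponding separating curves induce the same symplectic splitting of $H_1(S_2,\Z)$; the nontrivial ``if'' direction is proved by constructing explicit connecting mapping classes in $\Stab_{\I(S_2)}(c)$ following the template of Lemma~\ref{lem:2 pairs} (using \cite[Theorem 1A]{dj} and \cite[Lemma A.3]{ap}). Every symplectic splitting compatible with $x$ is realized by some separating curve disjoint from $c$, giving a bijection between orbits and such splittings. Bass--Serre theory applied to the resulting graph of groups (cyclic vertex groups, trivial edge groups, contractible underlying complex) then yields $\Stab_{\I(S_2)}(c)$ as the free product of the cyclic Dehn twist groups, producing the claimed infinitely generated free group with one Dehn twist generator per symplectic splitting compatible with $x$.

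The main obstacle will be the combinatorial structure of $Y$: unlike the $4$-punctured sphere of Lemma~\ref{lem:2 pairs}, the twice-punctured torus admits triples of pairwise disjoint non-isotopic arcs from $p_1$ to $p_2$, so the full arc complex is genuinely $2$-dimensional rather than a tree. I expect to handle this either by restricting $Y$ to a $1$-dimensional contractible subcomplex compatible with the Farey-type quotient structure suggested by Figure~\ref{g2fareypic}, or by working with the $2$-complex directly and verifying that the $2$-cell stabilizers contribute only trivial relations to the Bass--Serre decomposition.
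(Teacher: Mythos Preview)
Your obstacle is genuine, and the paper sidesteps it rather than solving it. The arc complex of arcs from $p_1$ to $p_2$ in the twice-punctured torus really is $2$-dimensional, so classical Bass--Serre theory does not apply; to push your approach through you would need to pass to complexes of groups, verify that the quotient $Y/\Stab_{\I(S_2)}(c)$ is simply connected, and check that the $2$-cells contribute no relations. None of this is automatic, and your proposal does not address it.

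The paper avoids all of this by acting on a different complex. It fixes an auxiliary class $y\in H_1(S_2,\Z)$ represented by a nonseparating curve disjoint from $c$, and uses the complex of cycles $\B(S')$ for $y$ in the twice-punctured torus $S'$. The crucial observation is that any two disjoint, non-isotopic nonseparating curves in $S'$ have homology classes differing by $\pm x$, so every basic cycle for $y$ is a single curve in a class $y+kx$; hence $\B(S')$ is a genuine tree. The quotient by $\Stab_{\I(S_2)}(c)$ is the standard triangulation of $\R$, indexed by $k\in\Z$. Note that in this picture the vertices are \emph{nonseparating} curves, so the vertex stabilizers are not cyclic: they are precisely the groups $\Stab_{\I(S_2)}(\{c,d\})$ already computed in Lemma~\ref{lem:2 pairs} to be infinitely generated free groups, one Dehn twist generator per splitting compatible with $\{x,y+kx\}$. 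The proof finishes with the bookkeeping that, as $k$ ranges over $\Z$, these generators are in bijection with the splittings compatible with $x$ alone.

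So where you try to reach the separating-curve generators directly via a single (problematic) complex, the paper gets there in two stages---first the tree $\B(S')$, then Lemma~\ref{lem:2 pairs} at each vertex---and the tree structure at each stage is what makes the argument go through cleanly.
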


\begin{proof}

We cut $S_2$ along $c$ and obtain a twice-punctured torus $S'$.  To
find a generating set for $\Stab_{\I(S_2)}(c)$, we consider the action
on the complex of basic cycles for $S'$.  To this end, we fix a
homology class $y \in H_1(S_2,\Z)$ coming from a nonseparating curve
$d$ in $S_2$ that is disjoint (and isotopically distinct) from $c$.  The classes $x$ and $y$
correspond to classes in $H_1(S',\Z)$, which we also call $x$ and
$y$.

The choice of the class $y$ gives rise to a complex of
cycles $\B(S')$, which is contractible (the argument is the same
as the closed case, with only cosmetic changes).

Given any two disjoint (isotopically distinct) nonseparating curves in
$S'$, their homology classes necessarily differ by $\pm x$.  It
follows that an integral basic cycle for $y$ consists of a single
curve in the homology class $y+kx$ (along with $-kc$) and that
$\B(S')$ is a tree.  We also deduce that the vertices of the quotient
are indexed by $\Z$, and that if two vertices in the quotient are
connected by an edge, then they are adjacent in $\Z$.  Also, if two
vertices are adjacent in $\Z$, then we can realize them by disjoint
curves.  Thus the quotient of $\B(S')$ by $\Stab_{\I(S_2)}(c)$ is
isomorphic to the standard triangulation of $\R$.

As in Lemma~\ref{lem:2 pairs}, $\Stab_{\I(S_2)}(c)$ is freely
generated by the vertex stabilizers (we take one copy for each vertex
of the quotient).  By Lemma~\ref{lem:2 pairs}, each vertex stabilizer
is an infinitely generated free group, with one Dehn twist generator
for each homology splitting compatible with $x$ and $y+kx$.

It remains to establish the bijection between generators of
$\Stab_{\I(S_2)}(c)$ and homology splittings compatible with $x$.  By
Lemma~\ref{lem:2 pairs}, a generator corresponds to a unique homology
splitting compatible with $x$.  On the other hand, given a homology
splitting compatible with $x$, we consider the induced splitting of
$y$, and we get a unique generator.  It is clear that these
identifications are inverses of each other, so we are done.
\end{proof}

\p{Genus 2.} We finally use Lemmas~\ref{lem:2 pairs} and~\ref{lem:g1
  pair} to obtain the full version of the Mess theorem.

\begin{proof}[Proof of Theorem~\ref{main:mess}]

By Theorem~\ref{main:b}, the complex of cycles $\B(S_2)$ is contractible.  We also have that it is one dimensional
by Lemma~\ref{lemma:cell dimension}, and so it is in fact a tree.  As in the previous
lemmas, we need to argue that the quotient $\B(S_2)/\I(S_2)$ is
contractible, and then do some bookkeeping to check that there is
exactly one generator for each homology splitting.

The vertices of $\B(S_2)/\I(S_2)$ are in one to one correspondence
with the homology classes of basic cycles for $x$; that is, if two
nonseparating curves, or two pairs of nonseparating curves, in $S_2$
represent the same homology classes, then there is an element of
$\I(S_2)$ taking one to the other \cite[Lemma A.3]{ap}.  There is a
single distinguished vertex corresponding to the homology class $x$,
and infinitely many vertices corresponding to pairs of homology
classes (a basic cycle in $S_2$ has one or two curves).

In order to show that $\B(S_2)/\I(S_2)$ is contractible, we will
assign a ``weight'' to each vertex, and show that each vertex is
connected by an edge to exactly one vertex of smaller weight.  We
declare the weight of the distinguished vertex corresponding to a single curve in
the class $x$ to be 1.  For a vertex given by a basic cycle
$pa+qb$ with $p,q > 0$, we define the weight to be $p+q$.

Let $v$ be the vertex corresponding to the positive cycle $pa+qb$.
There are exactly two classes that can be represented disjointly from
the multicurve $\{a,b\}$, namely, $[a]+[b]$ and $[a]-[b]$.  Hence,
there are four potentially adjacent vertices, given by the pairings of
$[a]$ and $[b]$ with these two new classes.

The class $[a]-[b]$ cannot pair with either
$[a]$ or $[b]$ to give a vertex of smaller weight.  Indeed, we have
$(p+q)[a]+q([b]-[a]) = x$ and $(p+q)[b]+p([a]-[b])=x$, and so in
either case the weight is larger than $p+q$.

The weights of the other two nearby vertices are both smaller than
$p+q$, since we have $(q-p)[b]+p([a]+[b])=x$ and
$(p-q)[a]+q([a]+[b])=x$.  As $q-p$ is negative, it follows from
Lemma~\ref{lemma:orient} that $v$ is not connected by an edge to the
vertex corresponding to the first equation.  Since $p-q$ is positive,
it follows from Lemma~\ref{lemma:covariant} that $v$ is connected to
the other vertex.  This completes the proof of contractibility of
$\B(S_2)/\I(S_2)$.

Now for the bookkeeping.  The stabilizer of any lift of the distinguished
vertex is an infinitely generated free group generated by one Dehn
twist generator for each homology splitting compatible with $x$ by
Lemma~\ref{lem:g1 pair}.  The stabilizer of a lift of a vertex
corresponding to two homology classes is an infinitely generated free
group generated by one Dehn twist for each splitting compatible with
that pair by Lemma~\ref{lem:2 pairs}.  The theorem follows.
\end{proof}

\p{Remark.} It is more illuminating to draw a diagram of
$\B(S_2)/\I(S_2)$.  It is naturally subdivided into pieces,
corresponding to different 2-dimensional isotropic subspaces of
$H_1(S,\Z)$ that contain $x$.  Each such subspace is group isomorphic
to $\Z^2$.  The set of bases for $\Z^2$ are depicted via the Farey
graph: vertices correspond to (unsigned) primitive vectors, and edges
correspond to bases.  There is a distinguished vertex $x$, and the
rest of the component of $\B(S_2)/\I(S_2)$ corresponding to this
subspace is the tree shown in Figure~\ref{g2fareypic}.  The entire
complex $\B(S_2)/\I(S_2)$ is obtained by gluing infinitely many of
these trees along their distinguished vertices.

\begin{figure}[htb]
\centerline{\includegraphics[scale=.5]{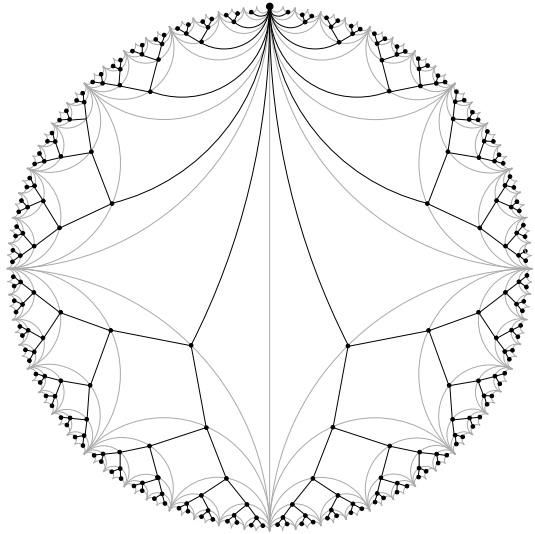}}
\caption{A piece of $\B(S_2)/\I(S_2)$ corresponding to a particular
  Lagrangian subspace of $H_1(S_2,\Z)$.  The Farey graph is drawn
  lightly for reference.}
\label{g2fareypic}
\end{figure}


\section{Infinite generation of homology}
\label{section:top}

The goal of this section is to prove Theorem~\ref{main:top}, which
states that, for $g \geq 2$, the group $H_{3g-5}(\I(S_g),\Z)$ is
infinitely generated.  The basic idea is to use induction (the base
case is Theorem~\ref{main:mess}) and the spectral sequences
associated to the appropriate Birman exact sequences.

Throughout this section, we will be forced to consider homology with various local and global coefficients.  When the coefficient ring is $\Z$, it will be convenient to omit this from the notation.

\subsection{Spectral sequences}
\label{section:spectral}

We recall some facts about spectral sequences;
for an introduction to the theory, see \cite{ah2}, \cite{ffg}, or \cite{ksb}.

\p{Hochschild--Serre spectral sequence.} Given a short exact sequence of groups
\[ 1 \to K \to G \to Q \to 1 \]
there is an associated Hochschild--Serre spectral sequence.  The second
page of this spectral sequence is the two-dimensional array of
homology groups
\begin{equation*}
E^2_{p,q} = \begin{cases}
H_p(Q,H_q(K)) & p,q \geq 0 \\
0 & \mbox{otherwise}
\end{cases}
\end{equation*}
where the coefficients $H_q(K)$ are considered to be local: $Q$ acts on
$K$ by conjugation.  The main difficulties in calculation come from this action.

We remark that Fact~\ref{fact:cd ses} is an easy consequence of the Hochschild--Serre spectral sequence.

\begin{fact}
\label{fact:spectral}
Consider the Hochschild--Serre spectral sequence associated to a short exact sequence
\[ 1 \to K \to G \to Q \to 1 \]
and assume $\cd(Q)$ and $\cd(K)$ are finite.  For
$p=\cd(Q)$ and $q=\cd(K)$, we have
\[ H_{p+q}(G) \cong E^2_{p,q} = H_p(Q,H_q(K)). \]
\end{fact}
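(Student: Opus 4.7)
The plan is to exploit the fact that, on the $E^2$ page at total degree $p+q$, only the corner term $E^2_{p,q}$ is nonzero, and that all differentials into and out of this term vanish for trivial dimension reasons, so it survives unchanged to the $E^\infty$ page and fills up all of $H_{p+q}(G)$.

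First I would note that cohomological dimension controls homology as well as cohomology. Indeed, $\cd(G)\le n$ is equivalent to $\Z$ having a projective resolution of length $\le n$ over $\Z G$, and hence $\text{Tor}_k^{\Z G}(\Z,M)=H_k(G,M)=0$ for all $k>\cd(G)$ and every $\Z G$-module $M$. Applying this to $Q$ (with arbitrary coefficients) and to $K$, I obtain that $E^2_{p',q'}=H_{p'}(Q,H_{q'}(K))$ vanishes whenever $p'>\cd(Q)$ or $q'>\cd(K)$.

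Next, set $n=p+q=\cd(Q)+\cd(K)$. On the antidiagonal $p'+q'=n$, any pair $(p',q')\ne(p,q)$ forces either $p'>p=\cd(Q)$ or $q'>q=\cd(K)$, and so $E^2_{p',q'}=0$ on this antidiagonal except at $(p,q)$. Now I would examine the Hochschild--Serre differentials at $E^r_{p,q}$ for $r\ge 2$. The outgoing differential $d_r\colon E^r_{p,q}\to E^r_{p-r,\,q+r-1}$ lands in a term whose $E^2$-predecessor $H_{p-r}(Q,H_{q+r-1}(K))$ vanishes because $q+r-1>\cd(K)$; the incoming differential $d_r\colon E^r_{p+r,\,q-r+1}\to E^r_{p,q}$ starts from a term whose $E^2$-predecessor $H_{p+r}(Q,H_{q-r+1}(K))$ vanishes because $p+r>\cd(Q)$. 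Hence $E^\infty_{p,q}=E^2_{p,q}$.

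Finally, the spectral sequence converges to a filtration of $H_n(G)$ whose associated graded consists of the $E^\infty_{p',q'}$ with $p'+q'=n$. By the previous paragraph, every such term vanishes except $(p,q)$, so the filtration has a single nontrivial stratum and $H_{p+q}(G)\cong E^\infty_{p,q}=E^2_{p,q}=H_p(Q,H_q(K))$, as claimed. There is essentially no obstacle here: the statement is really a formal consequence of convergence together with the vanishing of $E^2$ outside the rectangle $[0,\cd(Q)]\times[0,\cd(K)]$; the only point requiring a (standard) remark is the passage from cohomological dimension to the vanishing of higher \emph{homology}.
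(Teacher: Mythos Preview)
Your proof is correct and follows exactly the approach the paper has in mind: the paper states this as a fact without a displayed proof, but its commented-out justification is that ``starting with the second page, any differential with image in the $(p,q)$ entry has domain the trivial group, and any differential with domain the $(p,q)$ entry has the trivial group as its target.'' Your argument fleshes this out completely, and your explicit remark that $\cd(G)\le n$ forces vanishing of $H_k(G,M)$ for $k>n$ (via a length-$n$ projective resolution) is a useful clarification that the paper leaves implicit.
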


\p{Cartan--Leray spectral sequence.} For a group $G$ acting cellularly
and without rotations on a contractible complex $X$, we can associate
the Cartan--Leray spectral sequence (to act \emph{without rotations} means
that, if a cell is fixed by some element, then the cell is fixed
pointwise).  The first page of this spectral sequence is given by
\begin{equation*}
E^1_{p,q} = \begin{cases}
\displaystyle\bigoplus_{\sigma \in X_p} H_q(G_\sigma) & p,q \geq 0 \\
0 & \mbox{otherwise}
\end{cases}
\end{equation*}
where $X_p$ is the set of $p$-cells of $X/G$, and $G_{\sigma}$ is the
stabilizer in $G$ of a lift of the cell $\sigma$.

We only need the following basic fact; see \cite[Section
  VII.7]{ksb} and \cite[Section 18]{ffg}.

\begin{fact}
\label{fact:cartan}
Consider the Cartan--Leray spectral sequence associated to the action
of the group $G$ on the contractible space $X$.  Suppose that
$E^1_{p,q}=0$ for $p+q > k$.  Then $E^1_{0,k} = \oplus H_k(G_v)$
injects as a subgroup of $H_k(G)$ (the sum is over all vertices $v$ of
$X/G$).
\end{fact}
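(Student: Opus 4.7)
The plan is a routine vanishing-line argument in the Cartan--Leray spectral sequence, which converges, under the standard homological convention, to $H_\star(G)$. First I would recall that this convergence provides, for each $n$, an increasing filtration
\[ 0 = F_{-1}H_n(G) \subseteq F_0 H_n(G) \subseteq \cdots \subseteq F_n H_n(G) = H_n(G) \]
whose associated graded is $F_p/F_{p-1} \cong E^\infty_{p,n-p}$. In particular, the bottom piece satisfies $F_0 H_k(G) = E^\infty_{0,k}$, so this term embeds as a subgroup of $H_k(G)$, and it suffices to prove $E^1_{0,k} = E^\infty_{0,k}$.

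Next I would track the differentials at the entry $(0,k)$. The $r$-th differential has the form $d^r \colon E^r_{p,q} \to E^r_{p-r,\,q+r-1}$. The outgoing differential from $(0,k)$ lands in $E^r_{-r,\,k+r-1}$, which vanishes because $p<0$ on the first page and hence on every later page. The incoming differential into $(0,k)$ arrives from $E^r_{r,\,k-r+1}$, which sits in total degree $p+q = k+1 > k$; by the hypothesis $E^1_{p,q}=0$ whenever $p+q>k$, together with the general fact that $E^{r+1}$ is a subquotient of $E^r$ (so this vanishing propagates to every page), that term is zero for all $r\geq 1$. Therefore the entry $(0,k)$ is altered by no differential at any stage, and $E^1_{0,k} = E^\infty_{0,k}$.

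Combining these two observations gives $E^1_{0,k} = E^\infty_{0,k} = F_0 H_k(G) \hookrightarrow H_k(G)$, which, unpacked via the isomorphism $E^1_{0,k} \cong \bigoplus_{v} H_k(G_v)$ of the excerpt, yields the claim. There is no genuine obstacle here; the only points deserving mild care are matching conventions, namely the homological indexing of differentials and the fact that the filtration on the abutment is increasing with $p$, so the $p=0$ stratum appears as a subgroup of $H_k(G)$ rather than a quotient.
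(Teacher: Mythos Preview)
Your argument is correct and is exactly the approach the paper indicates: the vanishing hypothesis $E^1_{p,q}=0$ for $p+q>k$ forces all incoming differentials at $(0,k)$ to have trivial source, while outgoing differentials land in the region $p<0$; hence $E^1_{0,k}=E^\infty_{0,k}$, which sits as the bottom filtration piece of $H_k(G)$. The paper does not spell this out in the text, merely citing \cite[Section VII.7]{ksb} and \cite[Section 18]{ffg}, but a suppressed remark in the source gives precisely this ``staircase'' reasoning.
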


We remark that Quillen's upper bound on the cohomological dimension of
a group given in the introduction is an immediate consequence of the
Cartan--Leray spectral sequence; this is Exercise 4 in Section VIII.2
of \cite{ksb}.


\subsection{Torelli Birman exact sequences}

In this section, we give two versions of the Birman exact sequence
that are particular to the Torelli group: the first, due to Putman,
allows us to pass from two punctures to one puncture; the second, due
to Johnson, enables us to go from there to the closed case.  For
each sequence, we analyze the action of the quotient on the kernel.

Let $v$ be a vertex of $\B(S_g)$ corresponding to a single curve $c$
(the image of $v$ in $\B(S_g)/\I(S_g)$ is the distinguished vertex).
We can think of the stabilizer in $\I(S_g)$ of $v$ (equivalently, of
$c$) as a subgroup of $\Mod(S_{g-1,2})$, and so, as in the Birman
exact sequence (Theorem~\ref{bes}), there is a forgetful map
\[ \Stab_{\I(S_g)}(v) \to \Mod(S_{g-1,1}) \]
whose kernel is a subgroup of $\pi_1(S_{g-1,1})$.

\begin{lemma}
\label{seq1}
The image of $\Stab_{\I(S_g)}(v)$ in $\Mod(S_{g-1,1})$ is a
subgroup of $\I(S_{g-1,1})$.  Moreover, the sequence
\[
1 \to K \to \Stab_{\I(S_g)}(v) \to \I(S_{g-1,1}) \to 1
\]
where $K$ is the commutator subgroup of $\pi_1(S_{g-1,1})$, is exact.
\end{lemma}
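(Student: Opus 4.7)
The plan is to analyze the composition $\Stab_{\I(S_g)}(v) \hookrightarrow \PMod(S_{g-1,2}) \to \PMod(S_{g-1,1})$, where the first map comes from cutting $S_g$ along $c$ and the second is the forgetful map of the Birman exact sequence (Theorem~\ref{bes}), and to establish the image and exactness claims in parallel. For the image statement, the key observation is that $H_1(S_{g-1,1})$ is canonically the symplectic subquotient $\{\alpha \in H_1(S_g) : \alpha \cdot [c] = 0\}/\langle[c]\rangle$ of $H_1(S_g)$: cutting along $c$ identifies $H_1(S_{g-1,2})$ with the $[c]$-orthogonal subspace, and filling in one puncture then quotients by $\langle[c]\rangle$. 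Since every element of $\Stab_{\I(S_g)}(v)$ acts trivially on $H_1(S_g)$, it acts trivially on this subquotient, so its image in $\Mod(S_{g-1,1})$ lies in $\I(S_{g-1,1})$.

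For the kernel computation, the Birman sequence identifies the kernel of $\PMod(S_{g-1,2}) \to \PMod(S_{g-1,1})$ with $\pi_1(S_{g-1,1})$ acting by point-pushing, so the task is to determine which point-push elements $P_\gamma$ admit a lift to $\I(S_g) \cap \Stab_{\Mod(S_g)}(c)$ under the cutting surjection $\Stab_{\Mod(S_g)}(c) \twoheadrightarrow \PMod(S_{g-1,2})$ (whose kernel is $\langle T_c\rangle$). For a simple based loop $\gamma$ with push-offs $a,b$, the point-push lifts to $T_aT_b^{-1}$; the subtle feature is that because $\gamma$ is based at the added puncture, a regular neighborhood of $\gamma$ in $S_{g-1,2}$ is a pair of pants whose third boundary loops around that puncture, yielding the relation $[a]=[b]+[c]$ in $H_1(S_g)$ rather than the naive $[a]=[b]$. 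A direct calculation then shows that $T_aT_b^{-1}$ acts nontrivially on $H_1(S_g)$, and that the induced homomorphism $\pi_1(S_{g-1,1}) \to \Sp(H_1(S_g))/\langle T_c\rangle$ (the quotient absorbing the $T_c$-ambiguity in the lift) is abelian and factors injectively through $H_1(S_{g-1,1})$. Hence the kernel is exactly $[\pi_1(S_{g-1,1}),\pi_1(S_{g-1,1})]$.

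For surjectivity, given $\bar f \in \I(S_{g-1,1})$, I would produce a preliminary lift $\hat f \in \Stab_{\Mod(S_g)}(c)$ by successively inverting the Birman map and the cutting surjection. Since $\bar f$ acts trivially on $H_1(S_{g-1,1})$, symplecticity forces $\hat f$ to act on $H_1(S_g)$ by a \emph{unipotent} transformation shifting basis elements only by multiples of $[c]$. Composing $\hat f$ with an appropriate word in point-pushes of simple loops (to cancel the shifts on the basis vectors of the $[c]$-orthogonal subspace) and with a power of $T_c$ (to correct the residual shift on $[c^*]$) produces an element of $\I(S_g)$ with the same image $\bar f$. The main obstacle throughout is the explicit symplectic calculation in the kernel step: one must verify both the relation $[a]-[b]=[c]$ in $H_1(S_g)$ and the injectivity of the induced map out of $H_1(S_{g-1,1})$, after which both the kernel description and the constructive modifications needed for surjectivity fall out of the same transvection bookkeeping.
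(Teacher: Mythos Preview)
Your sketch is essentially correct, and the symplectic bookkeeping you outline can indeed be carried out: the transvections $\psi_v(z)=z+(z\cdot v)[c]+(z\cdot[c])v$ commute modulo $\langle\tau_c\rangle$, and the resulting map $H_1(S_{g-1,1})\to\Sp_{[c]}/\langle\tau_c\rangle$ is injective, which pins down the kernel as the commutator subgroup and simultaneously supplies the correction terms for surjectivity.

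However, your route is quite different from the paper's, and it is worth seeing how. For the image statement, the paper invokes the geometric observation already used in Lemma~\ref{lemma:stab bound}: if $\Sigma$ witnesses $[d]=[f(d)]$ in $S_g$, then its image witnesses the same in $S_{g-1,1}$. Your subquotient argument is the linear-algebraic shadow of this and is equally valid. For surjectivity, the paper avoids your lift-and-correct procedure entirely: it fixes an embedding $S_{g-1,1}\hookrightarrow S_{g-1,2}$ whose complement is a twice-punctured disk, and extends any $\bar f\in\I(S_{g-1,1})$ by the identity; the resulting map on $S_g$ is supported on a genus-$(g-1)$ subsurface where it already acts trivially on homology, so no correction is needed. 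Most significantly, for the kernel the paper simply cites Putman \cite[Theorem~4.1]{ap} rather than computing. What your approach buys is self-containment---you are effectively reproving Putman's result via an explicit transvection calculation---at the cost of a fair amount of linear algebra that the paper outsources. One point to be careful about if you flesh this out: you should check that the images of a generating set of simple loops commute in $\Sp_{[c]}/\langle\tau_c\rangle$ (this is the computation $\psi_v\psi_w=\tau_c^{2(w\cdot v)}\psi_w\psi_v$), since ``the map is abelian'' is asserted but not argued in your sketch.
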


\begin{proof}

The first statement of the lemma is proven in the same way as the
observation in the proof of Lemma~\ref{lemma:stab bound} (page \pageref{obs}).  The
surjectivity of the map $\Stab_{\I(S_g)}(v) \to \I(S_{g-1,1})$ can be
seen as follows: embed $S_{g-1,1}$ in $S_{g-1,2}$ so that
$S_{g-1,2}-S_{g-1,1}$ is a twice-punctured disk; given an element of
$\I(S_{g-1,1})$ we extend it by the identity to get an element of
$\Stab_{\I(S_g)}(v)$.  The description of $K$ is due to Putman
\cite[Theorem 4.1]{ap} (Putman uses surfaces with boundary instead of
with punctures, but only inconsequential changes to his proof are
required).
\end{proof}

We also have the following short exact sequence, due to Johnson
\cite{dj1}.

\begin{lemma}
\label{seq2}
For $g \geq 2$, we have
\[ 1 \to \pi_1(S_{g}) \to \I(S_{g,1}) \to \I(S_{g}) \to 1. \]
\end{lemma}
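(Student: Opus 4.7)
The strategy is to derive the claimed sequence by restricting the standard Birman exact sequence
\[ 1 \to \pi_1(S_g, p) \to \Mod(S_{g,1}) \to \Mod(S_g) \to 1 \]
from Theorem~\ref{bes} to the preimage of $\I(S_g)$ under the forgetful map. The central observation I would establish first is that the inclusion $S_{g,1} \hookrightarrow S_g$ induces an isomorphism $H_1(S_{g,1},\Z) \cong H_1(S_g,\Z)$ (removing a point does not affect $H_1$), and that under this canonical identification the forgetful map intertwines the two symplectic representations; that is, the maps $\Mod(S_{g,1}) \to \Sp(2g,\Z)$ and $\Mod(S_g) \to \Sp(2g,\Z)$ fit into a commutative triangle with the Birman forgetful map.

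With this naturality in hand, the argument has three short steps. First, every point-push lies in $\I(S_{g,1})$: it maps to the identity in $\Mod(S_g)$, and hence by the commutative triangle it acts trivially on $H_1(S_{g,1},\Z)$. This both verifies that $\pi_1(S_g,p) \subseteq \I(S_{g,1})$ and identifies the kernel of the restricted map $\I(S_{g,1}) \to \Mod(S_g)$ with all of $\pi_1(S_g,p)$.

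Second, the commutative triangle also shows that the image of $\I(S_{g,1})$ under the forgetful map lies in $\I(S_g)$. Third, for surjectivity onto $\I(S_g)$, given $f \in \I(S_g)$ I would pick an arbitrary lift $\tilde f \in \Mod(S_{g,1})$ from the Birman sequence; the commutative triangle forces the symplectic image of $\tilde f$ to equal that of $f$, namely the identity, so $\tilde f \in \I(S_{g,1})$.

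There is no serious obstacle here: once the naturality of the $H_1$ identification with respect to the Birman forgetful map is in place, the exactness of the sequence follows by a routine diagram chase from the Birman exact sequence.
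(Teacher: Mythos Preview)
Your argument is correct. The paper does not actually supply a proof of this lemma; it simply states the short exact sequence and attributes it to Johnson \cite{dj1}. What you have written is the standard derivation from the Birman exact sequence (Theorem~\ref{bes}): the commutativity of the triangle of symplectic representations immediately yields that the point-pushing subgroup lies in $\I(S_{g,1})$, that the forgetful map sends $\I(S_{g,1})$ into $\I(S_g)$, and that any lift of an element of $\I(S_g)$ already lies in $\I(S_{g,1})$. There is nothing to add.
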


As indicated in Section~\ref{section:spectral}, we can apply
the Hochschild--Serre spectral sequence to understand the groups in
Lemmas~\ref{seq1} and~\ref{seq2}.  Since coefficients are local, we will 
 need to understand the actions of the quotient groups on
the kernels of the two sequences.

In the next lemma, we consider the action (by conjugation) of
$\I(S_{g})$ on $H_i(\pi_1(S_{g})) \cong H_i(S_{g})$ coming from the
sequence in Lemma~\ref{seq2}.  This is the restriction of the usual
action of $\Mod(S_g)$ on $H_1(S_g)$.

\begin{lemma}
\label{lem:trivial action}
The action of $\I(S_{g})$ on $H_i(S_g)$ is trivial for $i=0,1,2$.
\end{lemma}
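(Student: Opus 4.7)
The plan is to unwind the action in question and reduce each case to a standard fact. The sequence from Lemma~\ref{seq2} gives, via conjugation, a homomorphism $\I(S_g)\to\Out(\pi_1(S_g))$ (the indeterminacy by inner automorphisms is irrelevant since inner automorphisms act trivially on group homology). Composing with the functorial action on $H_i(\pi_1(S_g))\cong H_i(S_g)$ gives the action we need to analyze. By Dehn--Nielsen--Baer, this homomorphism is just the inclusion $\I(S_g)\hookrightarrow\Mod(S_g)$ composed with the standard $\Mod(S_g)$-action on $H_i(S_g)$, so it suffices to argue that $\I(S_g)$ acts trivially on $H_i(S_g)$ for $i=0,1,2$.

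For $i=0$ there is nothing to prove: $H_0(S_g)\cong\Z$ and any self-homeomorphism acts as the identity on it.

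For $i=1$ the action of $\I(S_g)$ on $H_1(S_g,\Z)$ is trivial by the very definition of the Torelli group as the kernel of $\Mod(S_g)\to\Sp(2g,\Z)$ given in the Introduction.

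For $i=2$, the group $H_2(S_g,\Z)\cong\Z$ is generated by the fundamental class $[S_g]$, and any orientation-preserving homeomorphism fixes this generator. Since $\Mod(S_g)=\pi_0(\Homeo^+(S_g))$ consists entirely of orientation-preserving classes, the whole group, and in particular $\I(S_g)$, acts trivially on $H_2(S_g)$. No step here is an obstacle; the only mild subtlety is keeping track of the fact that the action of $\I(S_g)$ arising from the Hochschild--Serre setup is the one induced by the outer action on $\pi_1(S_g)$, and then identifying this with the standard action on $H_i(S_g)$.
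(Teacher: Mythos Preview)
Your proof is correct and follows essentially the same approach as the paper: $i=0$ is trivial, $i=1$ is the definition of the Torelli group, and $i=2$ follows from orientation-preservation. You are simply more explicit than the paper in identifying the conjugation action from the Hochschild--Serre setup with the standard $\Mod(S_g)$-action on $H_i(S_g)$ via Dehn--Nielsen--Baer, which the paper leaves implicit.
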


\begin{proof}

The case $i=0$ is trivial.  When $i=1$, the statement of the lemma is
the definition of $\I(S_{g})$.  For $i=2$, the lemma is equivalent
to the fact that elements of $\I(S_{g})$ preserve the orientation of $S_{g}$.
\end{proof}

We now consider the action of $\I(S_{g-1,1})$ on $H_1(K)$ coming from
Lemma~\ref{seq1}.  Unlike the previous case, the action here is
nontrivial (for instance, take an element of $K$ that is the
commutator of two simple loops that intersect only at the basepoint,
and consider the action by a Dehn twist about a separating curve that
is disjoint from the first loop and intersects the other one twice).
However, we will be able to find a subgroup of $H_1(K)$ on which
$\I(S_{g-1,1})$ acts trivially (Lemma~\ref{lem:global module} below).
In the proof of Theorem~\ref{main:top}, this will allow us to use
global coefficents.

First, we have a technical lemma about commutators.

\begin{lemma}
\label{lem:commutator}
Let $F=\langle x_1,\cdots,x_{2n} \rangle$ be the free group generated by
the $x_i$, let $K$ be the commutator subgroup of $F$, and let $h \in
K$ be the product of commutators $[x_1,x_2]\cdots[x_{2n-1},x_{2n}]$.  
The class of $h$ in $H_1(K)$ is nontrivial.
\end{lemma}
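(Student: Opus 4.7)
The plan is to interpret the statement via covering space theory. Let $X$ be the wedge of $2n$ oriented circles, so that $\pi_1(X) = F$, and let $\tilde X \to X$ be the universal abelian cover (that is, the cover corresponding to the commutator subgroup $K$). Then $\tilde X$ is naturally identified with the $1$-skeleton of the standard cubulation of $\R^{2n}$, equivalently the Cayley graph of $\Z^{2n}$ with its standard generating set, and $\pi_1(\tilde X) = K$. Since $\tilde X$ is a $1$-dimensional CW complex, $K$ is free and there are natural identifications
\[ H_1(K) \;\cong\; K^{ab} \;\cong\; H_1(\tilde X) \;=\; Z_1(\tilde X), \]
where the last equality holds because $\tilde X$ has no $2$-cells. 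Under these identifications, the class of an element $k \in K$ corresponds to the $1$-cycle given by the unique lift to $\tilde X$, starting at the origin, of the loop in $X$ representing $k$.

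I would then compute the lift $\tilde h$ of $h$ to $\tilde X$ directly, by reading the word $h = [x_1,x_2]\cdots[x_{2n-1},x_{2n}]$ one letter at a time starting at the origin. A direct check shows that the lift of each factor $[x_{2i-1},x_{2i}]$ traces out the oriented boundary of the unit square at the origin in the coordinate plane spanned by the standard basis vectors $e_{2i-1}$ and $e_{2i}$, returning to the origin before the next commutator begins. Hence $\tilde h$ is the concatenation of $n$ such square loops, one in each of the coordinate $2$-planes $\{e_1,e_2\},\dots,\{e_{2n-1},e_{2n}\}$ at the origin.

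The crucial point is that, because these $n$ squares lie in pairwise disjoint coordinate $2$-planes meeting only at the origin, they traverse pairwise disjoint sets of edges of $\tilde X$. Therefore the $1$-chain $\tilde h$ is a $\pm 1$-combination of $4n$ distinct edges in the free abelian group $C_1(\tilde X)$, and so is nonzero. Since $H_1(\tilde X) = Z_1(\tilde X)$ is a subgroup of $C_1(\tilde X)$, it follows that $[\tilde h] \neq 0$ in $H_1(\tilde X)$, and hence the class of $h$ is nontrivial in $H_1(K)$. There is no serious obstacle to this proof; the only step requiring care is the identification of the class of $h$ in $K^{ab}$ with the lifted $1$-cycle $\tilde h$, which is the standard description of the Hurewicz isomorphism for a graph.
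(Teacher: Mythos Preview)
Your proof is correct and follows exactly the approach the paper takes: the paper's proof simply states that $H_1(K)$ is the first homology of the $1$-skeleton of the standard cubing of $\R^{2n}$ and declares the nontriviality of $h$ to be ``clear,'' whereas you have carefully spelled out the covering-space identification and the explicit computation of the lifted cycle as a union of $n$ coordinate squares with disjoint edge sets. There is no substantive difference in strategy---you have filled in the details the paper omits.
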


\begin{proof}

The group $H_1(K)$ can be thought of as the first homology of the
1-skeleton of the standard cubing of $\R^{2n}$.  It is clear now that
$h$ represents a nontrivial class.
\end{proof}

As above, we consider the action by conjugation of $\I(S_{g-1,1})$ on
$H_1(K)$ coming from Lemma~\ref{seq1}.  We now give an invariant
submodule of $H_1(K)$.

\begin{lemma}
\label{lem:global module}
There is a nontrivial subgroup of $H_1(K)$ on which $\I(S_{g-1,1})$
acts trivially.
\end{lemma}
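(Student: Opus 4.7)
The plan is to take the nontrivial subgroup to be the infinite cyclic subgroup of $H_1(K)$ generated by the class $[h]$, where $h = [a_1, b_1] \cdots [a_{g-1}, b_{g-1}]$ is expressed in a standard free basis $\{a_1, b_1, \ldots, a_{g-1}, b_{g-1}\}$ of $F = \pi_1(S_{g-1,1})$.  Geometrically, $h$ represents the loop around the puncture of $S_{g-1,1}$, which corresponds to the curve $c \subset S_g$ whose isotopy class is $v \in \B(S_g)$.

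First I would verify nontriviality: this is immediate from Lemma~\ref{lem:commutator} applied with $n = g-1$, which gives $[h] \neq 0$ in $H_1(K)$.  Hence $\langle [h] \rangle \cong \Z$ is a nontrivial subgroup.

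Next I would verify that $\I(S_{g-1,1})$ acts trivially on this subgroup.  The action comes from conjugation inside the extension
\[ 1 \to K \to \Stab_{\I(S_g)}(v) \to \I(S_{g-1,1}) \to 1 \]
of Lemma~\ref{seq1}.  For $\phi \in \I(S_{g-1,1})$ with any lift $\tilde\phi \in \Stab_{\I(S_g)}(v)$, the conjugate $\tilde\phi h \tilde\phi^{-1} \in K$ represents $\phi_*([h])$, and this value is independent of the choice of lift because distinct lifts differ by an element of $K$, which conjugates $K$ by an inner automorphism (acting trivially on $H_1(K) = K^{\mathrm{ab}}$).  Since $\tilde\phi$ preserves the isotopy class of $c$, it preserves the free homotopy class of $h$, so $\tilde\phi h \tilde\phi^{-1} = g h g^{-1}$ for some $g \in F$.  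The crucial step is to argue that $g$ can be taken in $K$.  Once this is established, the image $[g] = 0$ in $F/K$ acts trivially on $[h]$ via the $\Z[F/K]$-module structure on $H_1(K)$, so
\[ \phi_*([h]) = [g h g^{-1}] = [g] \cdot [h] = [h]. \]

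The main obstacle is the assertion that the element $g$ conjugating $h$ can be taken in $K$ rather than in $F$.  This is precisely where Putman's theorem is used: the reason the Torelli version of the Birman exact sequence in Lemma~\ref{seq1} has kernel $K = [F,F]$ and not all of $F$ is that a point-push by $\alpha \in F$ yields an element of $\I(S_g)$ if and only if $[\alpha] = 0$ in $F/K$.  Consequently, the ambiguity in lifting $\phi$ to $\Stab_{\I(S_g)}(v)$ is exactly $K$, and this is exactly what is needed to trivialize the conjugation action on $[h]$ after passing to $H_1(K)$.
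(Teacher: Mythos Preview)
Your choice of the invariant class---the boundary word $h=[a_1,b_1]\cdots[a_{g-1},b_{g-1}]$---and your appeal to Lemma~\ref{lem:commutator} for nontriviality are exactly what the paper does. The gap is in the final paragraph, where you claim that $g$ can be taken in $K$. Your justification is that ``the ambiguity in lifting $\phi$ to $\Stab_{\I(S_g)}(v)$ is exactly $K$,'' but this only shows that the class of $g$ in $F/K$ is \emph{independent} of the lift, not that it vanishes. Concretely: if $\tilde\phi$ and $\tilde\phi'=k\tilde\phi$ are two lifts with $k\in K$, then $\tilde\phi' h(\tilde\phi')^{-1}=(kg)h(kg)^{-1}$, so $g$ changes to $kg$ and $[g]\in F/K$ is unchanged. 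Nothing in Putman's description of the kernel forces this common value $[g]$ to be zero.

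The paper closes this gap by exhibiting, for each $\phi\in\I(S_{g-1,1})$, a \emph{specific} lift $F$ for which the conjugation fixes $h$ on the nose (so $g=1$). The construction is geometric: embed $S_{g-1,1}\hookrightarrow S_{g-1,2}$ so that the complement is the twice-punctured disk bounded by the curve $d$ (where the image of $h$ in $\Mod(S_{g-1,2})$ is $T_d$), and extend a representative of $\phi$ by the identity across that disk. This $F$ visibly satisfies $F(d)=d$, hence $FT_dF^{-1}=T_d$; one then checks that $F$ really lands in $\Stab_{\I(S_g)}(v)$. Once you have one lift with $g=1$, your observation that other lifts differ by elements of $K$ finishes the argument. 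So your outline is right, but the crucial step needs the explicit lift---Putman's theorem alone does not supply it.
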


\begin{proof}

Let $\alpha_1, \beta_1, \dots, \alpha_{g-1}, \beta_{g-1}$ be the usual
generators for $\pi_1(S_{g-1,1})$, and let $\delta$ be the following
element of $\pi_1(S_{g-1,1})$:
\[ \delta = [\alpha_1,\beta_1]\cdots[\alpha_{g-1},\beta_{g-1}]. \]
Clearly, $\delta \in K$.  We will show that the class of $\delta$ in
$H_1(K)$ is fixed by $\I(S_{g-1,1})$.  By Lemma~\ref{lem:commutator},
this is a nontrivial element of $H_1(K)$, and so the module generated by
this element is the desired submodule of $H_1(K)$.

Thinking of $\Stab_{\I(S_g)}(v)$ as a subgroup of $\Mod(S_{g-1,2})$,
the image of $\delta$ under the map $K \to \Stab_{\I(S_g)}(v)$ in
Lemma~\ref{seq1} is the Dehn twist $T_d$ about a curve $d$ that bounds a
twice-punctured disk in $S_{g-1,2}$ (we think of $\delta$ as the
simple loop in this disk that is based at one puncture and surrounds
the other).

To prove the lemma, it suffices to show that, given an arbitrary
element $f$ of $\I(S_{g-1,1})$, there is a lift $F$ of $f$ to
$\Stab_{\I(S_g)}(v)$ (thought of as a subgroup of $\Mod(S_{g-1,2})$)
so that $FT_dF^{-1}=T_{F(d)}$ is conjugate in $K$ to $T_d$.  In fact,
we will be able to construct the lift $F$ so that $F(d)=d$, and so
$T_{F(d)}$ is actually equal to $T_d$.

Let $S_{g-1,1} \to S_{g-1,2}$ be a fixed embedding with the property
that $S_{g-1,2}-S_{g-1,1}$ is the twice-punctured disk bounded by $d$.
Via this embedding, any homeomorphism of $S_{g-1,1}$ can be regarded
as a homeomorphism of $S_{g-1,2}$ (extend by the identity).  Given $f
\in \I(S_{g-1,1})$, we take any representative homeomorphism, think of
it as a homeomorphism of $S_{g-1,2}$, and call the resulting mapping
class $F$.  By construction, $F(d)=d$.  It is clear that $F$ maps to
$f$ under the map $\Mod(S_{g-1,2}) \to \Mod(S_{g-1,1})$ that forgets
one puncture of $S_{g-1,2}$.  It remains to check that $F$ is an
element of $\Stab_{\I(S_g)}(v)$.

We embed $S_{g-1,2}$ into $S_g$ so that the punctures correspond to
homotopic nonseparating curves (that is, ``uncut'').  If the isotopy
class corresponding to the punctures is $c$, we can extend $F$ to an
element of $\Mod(S_g)$ that is well-defined up to Dehn twists about
$c$.  Since the support of $F$ is on a surface of genus $g-1$ and one
puncture, and $F$ acts trivially on the homology of this subsurface,
we can choose a lift of $F$ to $\Mod(S_g)$ that lies in $\I(S_g)$.
This completes the proof.
\end{proof}


\subsection{Proof of infinite generation of top homology}

This section contains the proof of Theorem~\ref{main:top}, which states that
$H_{3g-5}(\I(S_g),\Z)$ is infinitely generated.

\begin{lemma}
\label{lem:top hom 1}
If $H_{3g-5}(\I(S_g))$ is infinitely generated, then 
$H_{3g-3}(\I(S_{g,1}))$ is infinitely generated.
\end{lemma}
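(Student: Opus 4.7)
The plan is to apply the Hochschild--Serre spectral sequence directly to the short exact sequence of Lemma~\ref{seq2}, namely
\[ 1 \to \pi_1(S_g) \to \I(S_{g,1}) \to \I(S_g) \to 1, \]
and use Fact~\ref{fact:spectral} to read off the top homology. Since $\cd(\I(S_g))=3g-5$ by Theorem~\ref{main:torelli} and $\cd(\pi_1(S_g))=2$ for $g\geq 2$, the conditions of Fact~\ref{fact:spectral} are satisfied with $p=3g-5$ and $q=2$, so there is an isomorphism
\[ H_{3g-3}(\I(S_{g,1})) \;\cong\; E^2_{3g-5,\,2} \;=\; H_{3g-5}\bigl(\I(S_g),\, H_2(\pi_1(S_g))\bigr). \]

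Next I would identify the local coefficient module. Because $\pi_1(S_g)$ is the fundamental group of a closed orientable surface, $H_2(\pi_1(S_g)) \cong H_2(S_g,\Z) \cong \Z$, and Lemma~\ref{lem:trivial action} tells us that the conjugation action of $\I(S_g)$ on this $\Z$ is trivial. Hence the local system is in fact the constant system $\Z$, and the isomorphism above becomes
\[ H_{3g-3}(\I(S_{g,1})) \;\cong\; H_{3g-5}(\I(S_g),\Z). \]
The hypothesis that $H_{3g-5}(\I(S_g))$ is infinitely generated then transfers directly across this isomorphism to give infinite generation of $H_{3g-3}(\I(S_{g,1}))$.

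There is really no main obstacle here: everything reduces to invoking results already in hand (Theorem~\ref{main:torelli} for the cohomological dimension, Lemma~\ref{seq2} for the exact sequence, Lemma~\ref{lem:trivial action} for triviality of the action, and Fact~\ref{fact:spectral} for the top-corner identification in the Hochschild--Serre spectral sequence). The only point worth being careful about is verifying that Fact~\ref{fact:spectral} really applies at the corner $(3g-5,2)$, which is immediate from the vanishing of $E^2_{p,q}$ outside $0\leq p\leq 3g-5$, $0\leq q\leq 2$: all incoming and outgoing differentials at this corner have trivial source or target, so $E^2_{3g-5,2}$ survives to $E^\infty$ and contributes to the top-degree homology with no extension issues.
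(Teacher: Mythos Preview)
Your proof is correct and follows essentially the same approach as the paper: apply Fact~\ref{fact:spectral} to the Hochschild--Serre spectral sequence of the short exact sequence in Lemma~\ref{seq2}, identify $H_2(\pi_1(S_g))\cong\Z$ with trivial $\I(S_g)$-action via Lemma~\ref{lem:trivial action}, and read off the isomorphism $H_{3g-3}(\I(S_{g,1}))\cong H_{3g-5}(\I(S_g),\Z)$. Your added remark about why the corner term survives to $E^\infty$ is a nice clarification, but otherwise the argument is identical.
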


\begin{proof}

We know that $\cd(\pi_1(S_g))=2$.  By
Theorem~\ref{main:torelli}, $\cd(\I(S_g)) = 3g-5$. 
Applying Fact~\ref{fact:spectral} to the
short exact sequence of Lemma~\ref{seq2}, and using the identification
$H_2(\pi_1(S_g)) = H_2(S_g) \cong \Z$, we find
\[ H_{3g-3}(\I(S_{g,1})) \cong H_{3g-5}(\I(S_{g}),\Z). \]
By Lemma~\ref{lem:trivial action}, the coefficients in the last homology group are
global.  Therefore, by Theorem~\ref{main:top},
$H_{3g-3}(\I(S_{g,1}))$ is infinitely generated.
\end{proof}

We record the following corollary of Lemma~\ref{lem:top hom 1} (use Lemma~\ref{seq2} and Fact~\ref{fact:cd ses}).

\begin{cor}
\label{cor:dim punc}
For $g \geq 2$, we have $\cd(\I(S_{g,1}))=3g-3$.
\end{cor}

\begin{lemma}
\label{lem:vertex inject}
If $v$ is any vertex of $\B(S_g)$, then $H_{3g-5}(\Stab_{\I(S_g)}(v))$
injects into $H_{3g-5}(\I(S_g))$.
\end{lemma}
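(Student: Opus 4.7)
The plan is to apply Fact~\ref{fact:cartan} (the Cartan--Leray injection statement) with $k = 3g-5$ to the action of $\I(S_g)$ on the barycentric subdivision $\B'(S_g)$. Three hypotheses must be verified: contractibility of the complex, absence of rotations, and the vanishing $E^1_{p,q} = 0$ for $p+q > 3g-5$. Passing to the barycentric subdivision rather than using $\B(S_g)$ itself is the main wrinkle: in general $\I(S_g)$ can permute the vertices of a cell of $\B(S_g)$ (imagine a cell whose basic cycles are single curves all representing $x$, as in the triangle of Figure~\ref{cellspic}, acted on by a mapping class that cyclically permutes those curves), so the without-rotations hypothesis can fail for $\B(S_g)$ directly and one must work with $\B'(S_g)$.

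Contractibility of $\B'(S_g)$ is immediate from Theorem~\ref{main:b}. The without-rotations property is automatic for a barycentric subdivision: any element stabilizing a simplex of $\B'(S_g)$ permutes its vertices, which are the barycenters of a chain $\sigma_0 \subsetneq \cdots \subsetneq \sigma_p$ of cells of $\B(S_g)$ of strictly increasing dimensions, and since the action of $\I(S_g)$ is cellular and preserves dimension, each barycenter must be fixed. For the vanishing of $E^1_{p,q}$, let $\tau$ be a $p$-simplex of $\B'(S_g)$ arising from such a chain; then $\Stab_{\I(S_g)}(\tau) \subseteq \Stab_{\I(S_g)}(\sigma_p)$ (any element stabilizing $\tau$ fixes in particular the barycenter of $\sigma_p$), and $\dim(\sigma_p) \geq p$. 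Proposition~\ref{prop:dim lemma} then yields
\[ \cd(\Stab_{\I(S_g)}(\tau)) \leq \cd(\Stab_{\I(S_g)}(\sigma_p)) \leq 3g-5 - \dim(\sigma_p) \leq 3g-5 - p, \]
so each summand $H_q(\Stab_{\I(S_g)}(\tau))$ of $E^1_{p,q}$ vanishes for $p+q > 3g-5$.

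Applying Fact~\ref{fact:cartan}, the direct sum $\bigoplus_w H_{3g-5}(\Stab_{\I(S_g)}(w))$, taken over a set of $\I(S_g)$-orbit representatives of vertices of $\B'(S_g)$, injects into $H_{3g-5}(\I(S_g))$. Any vertex $v$ of $\B(S_g)$ is also a vertex of $\B'(S_g)$, and $\Stab_{\I(S_g)}(v)$ is conjugate in $\I(S_g)$ to the stabilizer of its orbit representative, so $H_{3g-5}(\Stab_{\I(S_g)}(v))$ appears up to isomorphism as one of the above summands and therefore injects into $H_{3g-5}(\I(S_g))$. The only minor bookkeeping point is that Proposition~\ref{prop:dim lemma} is phrased for cells of $\B(S_g)$ rather than simplices of $\B'(S_g)$, but this is handled by the inequality $\dim(\sigma_p) \geq p$ used above.
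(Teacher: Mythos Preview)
Your argument is correct and follows the same strategy as the paper (Cartan--Leray via Fact~\ref{fact:cartan} together with Proposition~\ref{prop:dim lemma}). The only difference is that the paper works directly with $\B(S_g)$ rather than its barycentric subdivision. Your concern that $\I(S_g)$ might rotate cells is in fact unfounded: by the result of Ivanov cited in the proof of Lemma~\ref{lemma:stab bound}, the image of $\Stab_{\I(S_g)}(M)$ in $\Mod(S_g-M)$ lies in $\PMod(S_g-M)$, so a Torelli element stabilizing $M$ fixes each curve of $M$ individually (with orientation), hence fixes every basic cycle supported in $M$, hence fixes the cell $P_M$ pointwise. In particular, the cyclic permutation you describe cannot be realized inside $\I(S_g)$. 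The detour through $\B'(S_g)$ is therefore unnecessary, though it does no harm.
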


\begin{proof}

We consider the Cartan--Leray spectral sequence associated to the
action of $\I(S_g)$ on $\B(S_g)$.  By Proposition~\ref{prop:dim
  lemma}, we know that $E^1_{p,q}=0$ whenever $p+q > 3g-5$.  By
Fact~\ref{fact:cartan}, the group $E^1_{0,3g-5} = \oplus H_{3g-5}(\Stab_{\I(S_g)}(v))$, where the
sum is over a set of representatives of the vertices of $\B(S_g)/\I(S_g)$, injects
into $H_{3g-5}(\I(S_g))$. The lemma follows.
\end{proof}

We are now ready to prove the theorem.

\begin{proof}[Proof of Theorem~\ref{main:top}]

We proceed by induction on the genus $g$.  Theorem~\ref{main:mess} tells us that $\I(S_2)$ is an infinitely generated free group, and
so $H_1(\I(S_2))$ is infinitely generated.  Let $g \geq 3$, and assume
the theorem is true for $g-1$, that is, $H_{3g-8}(\I(S_{g-1}))$ is
infinitely generated.

Let $v$ be a vertex of $\B(S_g)$ represented by a single curve in the class $x$.  By Lemma~\ref{lem:vertex inject}, it suffices to show that $H_{3g-5}(\Stab_{\I(S_g)}(v))$ is infinitely generated.

Consider the short exact sequence in Lemma~\ref{seq1}.  Since $K$ is
free, we have $\cd(K)=1$, and Corollary~\ref{cor:dim punc} gives
$\cd(\I(S_{g-1,1})) = 3g-6$.  Applying Fact~\ref{fact:spectral} to the
sequence in Lemma~\ref{seq1}, we obtain
\[ H_{3g-5}(\Stab_{\I(S_g)}(v)) \cong H_{3g-6}(\I(S_{g-1,1}),H_1(K)). \]
Thus, the theorem is reduced to showing that
$H_{3g-6}(\I(S_{g-1,1}),H_1(K))$ is infinitely generated.  By
Lemma~\ref{lem:global module}, there is a nontrivial submodule $M$ of $H_1(K)$ on which
$\I(S_{g-1,1})$ acts trivially.  Since $M$ is torsion free (it is a
subgroup of the first homology of a free group), the universal
coefficient theorem gives
\[ H_{3g-6}(\I(S_{g-1,1}),M) \cong H_{3g-6}(\I(S_{g-1,1})) \otimes M. \]
By the inductive hypothesis and Lemma~\ref{lem:top hom 1}, the latter is infinitely generated.

Now, the short exact sequence of modules
\[ 1 \to M \to H_1(K) \to H_1(K)/M \to 1 \]
induces a long exact sequence of homology groups, in which we find the following:
\[
\begin{array}{l}
 \cdots \to H_{3g-5}(\I(S_{g-1,1}),H_1(K)/M) \to
H_{3g-6}(\I(S_{g-1,1}),M) \\ \qquad \qquad \qquad \qquad \qquad \qquad    \to H_{3g-6}(\I(S_{g-1,1}),H_1(K)) \to
\cdots. \end{array} \]
By Corollary~\ref{cor:dim punc}, the first term shown is trivial.
Thus, the infinitely generated group $H_{3g-6}(\I(S_{g-1,1}),M)$ injects into
$H_{3g-6}(\I(S_{g-1,1}),H_1(K))$, and the theorem follows.
\end{proof}

In general, the vertices of $\B(S_g)$ correspond to basic cycles for $x$ supported on $k$ curves, with $1 \leq k \leq g$.  When $k < g$, the cohomological dimension of the vertex stabilizer is $3g-4-k$.  By an argument similar to that of Theorem~\ref{main:top}, the top dimensional integral homology of a vertex stabilizer ($k < g$) is infinitely generated.  For $k > 1$, there are infinitely many vertices of that type.  Therefore, to prove that $H_{3g-4-k}(\I(S_g),\Z)$ is infinitely generated, one would only have to show that the quotient of each vertex group by its incoming edge groups is nontrivial.

\begin{q}
Is is true that $H_{3g-4-k}(\I(S_g),\Z)$ is infinitely generated for $2 \leq k \leq g-1$?
\end{q}


\section{Torelli Mess subgroups}
\label{section:mess}

We now briefly describe Mess's proof of Theorem~\ref{thm:lower}.  We
do this for completeness, and also because Mess's argument is not
published in the exact form that gives Theorem~\ref{thm:lower}.  His
original paper is a preprint, available from Institut des Hautes
\'Etudes Scientifiques \cite{mess}.

The basic idea is to inductively define subgroups $\Gamma_g$ of
$\I(S_g)$ with the property that $\cd(\Gamma_g) = 3g-5$.  The theorem
then follows from Fact~\ref{fact:cd subgp} (monotonicity).

We employ the theory of Poincar\'e duality groups.  A group $\Gamma$
is a Poincar\'e duality group if the trivial $\Z\Gamma$-module $\Z$
admits a finite length projective resolution by finitely generated
$\Z\Gamma$-modules, and $H^i(\Gamma,\Z\Gamma)$ is trivial in all but one dimension, where it is group isomorphic to $\Z$; see \cite{jw} and \cite{rb} for background.  Fundamental
groups of closed aspherical manifolds are Poincar\'e duality groups,
for example.

We will need the following fact \cite[Theorem 3]{jw}.

\begin{theorem}
\label{thm:pd ses}
Given a short exact sequence of groups
\[ 1 \to K \to G \to Q \to 1 \]
where $K$ and $Q$ are Poincar\'e duality groups, it follows that $G$ is a Poincar\'e duality group and $\cd(G) = \cd(K) + \cd(Q)$.
\end{theorem}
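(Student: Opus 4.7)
The plan is to compute $H^\ast(G, \Z G)$ via the Lyndon--Hochschild--Serre spectral sequence of the given extension, and then read off both the Poincar\'e duality of $G$ and the dimension additivity from the answer. Set $n = \cd(K)$ and $m = \cd(Q)$; the spectral sequence has $E_2$-page
\[ E_2^{p,q} = H^p\bigl(Q,\, H^q(K, \Z G)\bigr) \Rightarrow H^{p+q}(G, \Z G), \]
so the crucial task is to identify the $\Z Q$-modules $H^q(K, \Z G)$.

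First I would observe that, as a left $\Z K$-module, $\Z G$ splits as $\bigoplus_{s \in S} \Z K \cdot s$, where $S \subset G$ is a set of representatives for $K \backslash G$. Because $K$ is a Poincar\'e duality group, the trivial module $\Z$ admits a finite length resolution by finitely generated projective $\Z K$-modules, and finite generation in each degree ensures that $H^q(K, -)$ commutes with arbitrary direct sums. Combined with the defining properties $H^q(K, \Z K) = 0$ for $q \neq n$ and $H^n(K, \Z K) \cong \Z$, this yields $H^q(K, \Z G) = 0$ for $q \neq n$ and $H^n(K, \Z G) \cong \bigoplus_{s \in S} \Z$ as an abelian group. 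Chasing through the conjugation action of $G$ on $\Z G$ (restricted to $K$ and passed to the quotient) then identifies $H^n(K, \Z G)$ with $\Z Q$ as a $\Z Q$-module, up to a twist by the orientation character of $K$ which can be absorbed into the coefficient system on $Q$ if $K$ is not orientable.

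The spectral sequence accordingly collapses onto the single row $q = n$, producing
\[ H^{p+n}(G, \Z G) \cong H^p(Q, \Z Q), \]
which, by the PD hypothesis on $Q$, equals $\Z$ when $p = m$ and vanishes otherwise. This is the cohomological half of the PD condition for $G$ and simultaneously gives $\cd(G) = m + n$. For the remaining condition, that $\Z$ admits a finite length resolution by finitely generated projective $\Z G$-modules, I would tensor a finite resolution of $\Z$ by finitely generated projective $\Z Q$-modules (pulled back to $\Z G$ along $G \to Q$) with a corresponding resolution over $\Z K$; both factors are finite in length and degreewise finitely generated because $K$ and $Q$ are PD groups, and the tensor product inherits these properties over $\Z G$.

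The main obstacle I expect is the clean identification of the $\Z Q$-module structure on $H^n(K, \Z G)$: the action is defined only after choosing coset representatives, so one must verify that the resulting $2$-cocycle is cohomologous to the trivial one (up to the orientation twist). This is where the non-orientable case becomes genuinely subtle, whereas in the orientable case it reduces to the standard Shapiro/shifting calculation. Once this identification is made, the rest of the argument is formal spectral sequence manipulation, and no further interaction between $K$ and $Q$ is required.
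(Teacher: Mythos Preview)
The paper does not prove this statement; it is quoted without proof as \cite[Theorem 3]{jw}, a reference to the literature on Poincar\'e duality groups. There is therefore no argument in the paper to compare yours against.

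That said, your outline via the Lyndon--Hochschild--Serre spectral sequence is the standard proof and is correct in its essentials. Two small points of precision. First, the twist you mention is really by a character of $Q$ rather than of $K$: the $G$-action on $D_K=H^n(K,\Z K)$ induced by conjugation factors through $Q$ because inner automorphisms act trivially on cohomology, and since $\Z Q$ twisted by any sign character of $Q$ is isomorphic to $\Z Q$ as a $\Z Q$-module, the twist is invisible in $H^p(Q,-)$ anyway. Second, for the finiteness condition your tensor-of-resolutions sketch needs a slight adjustment: the pullback along $G\to Q$ of a projective $\Z Q$-module is not projective over $\Z G$. The clean fix is to induce a finite-length, finitely generated projective $\Z K$-resolution of $\Z$ up to $G$, obtaining a finite-length, finitely generated projective $\Z G$-resolution of $\Z Q$, and then splice this into the pulled-back $\Z Q$-resolution of $\Z$. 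Your instinct that the identification of the $\Z Q$-module structure on $H^n(K,\Z G)$ is the only nonformal step is exactly right.
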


To start the inductive process, we define $\Gamma_2$ to be the group
generated by a single Dehn twist about a separating curve in $S_2$.
Since $\Gamma_2 = \pi_1(S^1) \cong \Z$, we have that $\Gamma_2$ is a
Poincar\'e duality group.  Of course, $\cd(\Gamma_2)=1$.

We now assume that $\Gamma_{g}$ is given, and that it is a Poincar\'e duality group with cohomological dimension $3g-5$.  We will define $\Gamma_{g+1}$ in two steps.

Let $S_{g}'$ be the surface obtained from $S_{g}$ by removing an
open disk.  We consider the \emph{relative mapping class group}
$\Mod(S_{g}',\partial S_{g}')$, which is defined as
$\pi_0(\Homeo^+(S_{g}',\partial S_{g}'))$, where
$\Homeo^+(S_{g}',\partial S_{g}')$ is the subgroup of elements of
$\Homeo^+(S_{g}')$ that fix $\partial S_{g}'$ pointwise.  Let
$UT(S_{g})$ denote the unit tangent bundle for $S_{g}$.

We have the following generalization of the classical Birman exact
sequence due to Johnson \cite{dj1}.

\begin{theorem}[Relative Birman exact sequence]
\label{relative bes}
Let $g \geq 2$.  We have
\[ 1 \to \pi_1(UT(S_{g})) \to \Mod(S_{g}',\partial S_{g}') \to \Mod(S_{g}) \to 1. \]
\end{theorem}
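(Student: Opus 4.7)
The plan is to factor the forgetful map $\Mod(S_g', \partial S_g') \to \Mod(S_g)$ through the classical Birman exact sequence (Theorem~\ref{bes}) and then account for the extra data carried by a parametrized boundary. Let $p$ denote the image of $\partial S_g'$ under the quotient $S_g' \to S_g$ that collapses the boundary circle to a point; capping off the boundary with a disk induces a natural map $\Mod(S_g', \partial S_g') \to \PMod(S_g - p) \to \Mod(S_g)$. Surjectivity is straightforward: any orientation-preserving homeomorphism of $S_g$ may be isotoped to one that pointwise fixes a prescribed embedded disk, and removing this disk yields the required preimage.

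To identify the kernel I would work in two stages. First, Theorem~\ref{bes} identifies the kernel of $\PMod(S_g - p) \to \Mod(S_g)$ with $\pi_1(S_g, p)$, implemented by point-pushing. Second, the map $\Mod(S_g', \partial S_g') \to \PMod(S_g - p)$, which forgets the boundary parametrization, has kernel generated by the boundary Dehn twist $T_{\partial S_g'}$: by Alexander's trick, any homeomorphism of a disk fixing the boundary pointwise is isotopic to the identity rel boundary, so the only new data in $\Mod(S_g', \partial S_g')$ beyond $\PMod(S_g - p)$ is the twisting at $\partial S_g'$. Since $T_{\partial S_g'}$ has infinite order and is central, this produces a central extension
\[ 1 \to \Z \to \Mod(S_g', \partial S_g') \to \PMod(S_g - p) \to 1. \]
Pulling back the Birman sequence along the quotient on the right, I obtain a central $\Z$-extension
\[ 1 \to \Z \to K \to \pi_1(S_g, p) \to 1, \]
where $K$ is the kernel of $\Mod(S_g', \partial S_g') \to \Mod(S_g)$.

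The main obstacle is the final identification $K \cong \pi_1(UT(S_g))$. The extension class ought to be the Euler class of $TS_g$, since pushing a framed disk once around a loop returns it rotated by the total turning of the loop, and this rotation is recorded as a power of $T_{\partial S_g'}$; since $\pi_1(UT(S_g))$ is by construction the central $\Z$-extension of $\pi_1(S_g)$ classified by the Euler class, this identifies $K$ with $\pi_1(UT(S_g))$. To make the Euler class computation honest without checking it generator-by-generator, I would prefer the homotopy-theoretic route: set up the evaluation fibration
\[ \Homeo^+(S_g', \partial S_g') \to \Homeo^+(S_g) \to \operatorname{Emb}^+(D, S_g), \]
where the right-hand map restricts to a fixed embedded disk $D \subset S_g$, and observe that the base is homotopy equivalent to $UT(S_g)$ via the 1-jet at the center (using Smale's theorem that $\operatorname{Diff}(D, \partial D)$ is contractible). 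Invoking Earle-Eells (for $g \geq 2$, the identity components of $\Homeo^+(S_g)$ and $\Homeo^+(S_g', \partial S_g')$ are contractible) and taking the long exact sequence in homotopy then produces the entire desired short exact sequence in one stroke, with $\pi_1(UT(S_g))$ appearing naturally as the connecting piece between $\pi_1$ of the base and $\pi_0$ of the fiber.
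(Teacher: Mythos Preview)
The paper does not actually prove this statement: it is quoted as a result of Johnson \cite{dj1} and used as a black box in the construction of the Mess subgroups. So there is no ``paper's own proof'' to compare against.

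That said, your argument is correct and is essentially the standard one. The two-stage factorization through $\Mod(S_{g,1})$ together with the identification of the central $\Z$ with the boundary twist is exactly right, and your fibration approach is the cleanest way to pin down the extension class without a by-hand Euler class computation. One small point of hygiene: the result that the space of orientation-preserving embeddings of a disk into $S_g$ is homotopy equivalent to $UT(S_g)$ is most naturally stated in the smooth category (Smale's theorem on $\operatorname{Diff}(D^2,\partial D^2)$, plus the fibration of embeddings over framed points), whereas you phrase everything with $\Homeo$. Either work consistently in $\operatorname{Diff}$ throughout (Earle--Eells is a statement about $\operatorname{Diff}_0$), or cite the topological analogues; mixing the two without comment is the only place a careful reader might object.
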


It was noticed by Johnson that the entire image of $\pi_1(UT(S))$ lies
in the Torelli subgroup of $\Mod(S_{g}',\partial S_{g}')$ \cite{dj1}.
Therefore, the preimage of $\Gamma_g$ in
$\Mod(S_{g}',\partial S_{g}')$ is a subgroup of the Torelli group
$\I(S_{g}',\partial S_{g}')$.  We call this group $\Gamma_{g+1}'$.  By
Theorem~\ref{thm:pd ses}, $\Gamma_{g+1}'$ is a Poincar\'e duality
group of cohomological dimension $3(g+1)-5$ (since $UT(S_g)$ is a
closed aspherical 3-manifold, $\pi_1(UT(S_g))$ is a Poincar\'e duality group of
dimension 3).

The inclusion of $S_g'$ into $S_{g+1}$ induces an injective
homomorphism $\Mod(S_{g}',\partial S_{g}') \to \Mod(S_{g+1})$ (see,
e.g., \cite[Corollary 4.2]{pr}) which restricts to an inclusion on
the level of Torelli groups.  We define $\Gamma_{g+1}$ to be the image
of $\Gamma_{g+1}'$.  Since $\Gamma_{g+1}$ is isomorphic to
$\Gamma_{g+1}'$, we have that $\Gamma_{g+1}$ is a Poincar\'e duality
group of dimension $3(g+1)-5$, and this completes the proof of
Theorem~\ref{thm:lower}.

\p{Remark.} While the above argument certainly proves
Theorem~\ref{thm:lower}, we would like to point out that more is true.
Not only are the $\Gamma_g$ Poincar\'e duality groups, but they also
are fundamental groups of closed aspherical manifolds, with base
$\Gamma_{g-1}$ and fiber $UT(S_{g-1})$.  The proof of this stronger
statement is given by Ivanov \cite[Section 6.3]{nvi}.

\p{Remark.} Often the easiest way to get a lower bound on the
cohomological dimension of a group is to find a large abelian
subgroup (cf. Fact~\ref{fact:cd subgp}).  It is a theorem of Vautaw, however, that the largest
free abelian subgroup of $\I(S_g)$ has rank $2g-3$ \cite{wrv}.  In
Figure~\ref{maxpicx}, we exhibit such a subgroup in genus 5 (the
example generalizes to higher genus).  In that example, each generator is a Dehn twist
about a separating curve and so we see that we have $\Z^{2g-3}$ inside
$\K(S_g)$.  This gives the lower bound for $\cd(\K(S_g))$ given in the introduction.

\begin{figure}[htb]
\centerline{\includegraphics[scale=.25]{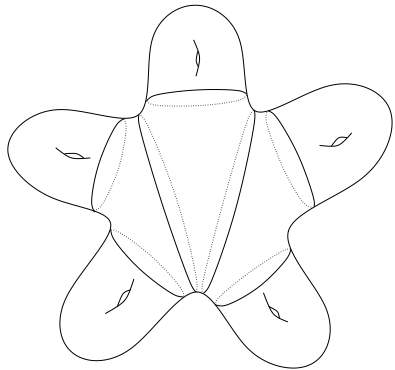}}
\caption{The Dehn twists about these curves generate a free abelian
  subgroup of maximal rank in $\I(S_5)$.}
\label{maxpicx}
\end{figure}

\p{Remark.} The construction of the Mess subgroups clearly illustrates
the discrepancy of $g$ between $\cd(\I(S_g))$ and $\vcd(\Mod(S_g))$.
To get a group of cohomological dimension $4g-5$ in $\Mod(S_g)$, Mess
simply augments the inductive construction of the $\Gamma_g$ by adding
the Dehn twist about a nonseparating curve in the new handle at each
stage.  This new Dehn twist generates a direct factor of the Mess
subgroup for $\Mod(S_g)$.  By only adding these Dehn twists on $g-k$
of the new handles, the construction gives the correct lower bound for Conjecture~\ref{conj:ik}.

\begin{figure}[htb]
\psfrag{x}{$a$}
\psfrag{d}{$d$}
\psfrag{y}{$b$}
\psfrag{a1}{$c_1$}
\psfrag{a2}{$c_2$}
\psfrag{a3}{$c_3$}
\psfrag{a4}{$c_4$}
\psfrag{a5}{$c_5$}
\centerline{\includegraphics[scale=.5]{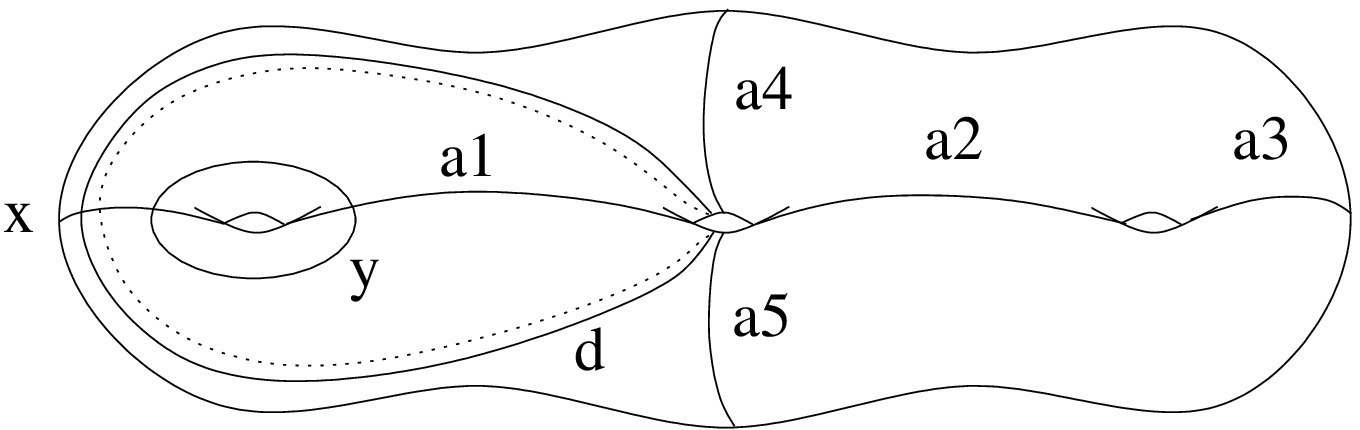}}
\caption{}
\label{counterconj2pic}
\end{figure}

On the other hand, we note that the conjecture cannot be proven by applying the
Quillen condition to the action of $\I^k(S_g)$ on $\B(S_g)$.  To see
this, we consider the group $\I^1(S_3)$, and the configuration shown in
Figure~\ref{counterconj2pic}; the handle being fixed homologically is
the one spanned by $a$ and $b$ (say $x=[a]$).  There is a 2-cell of $\B(S_3)$
corresponding to the $c_i$: $[c_1]+[c_2]+[c_3] = x$, and each of $c_4$ and
$c_5$ adds a ``borrowing dimension''.  The conjecture says that
$\cd(\I^k(S_g)) = 6$, so we would need that the stabilizer of
$\{c_i\}$ in $\I^1(S_3)$ has cohomological dimension at most 4.  On the contrary, we
can find a free abelian group of rank 5 in this stabilizer: the group
generated by $T_{c_2}$, $T_{c_3}$, $T_{c_4}$, $T_{c_5}$, and $T_d$.

\bibliographystyle{plain}
\bibliography{cdi}

\begin{thebibliography}{10}

\bibitem{rk}
Problems in low-dimensional topology.
\newblock In Rob Kirby, editor, {\em Geometric topology (Athens, GA, 1993)},
  volume~2 of {\em AMS/IP Stud. Adv. Math.}, pages 35--473. Amer. Math. Soc.,
  Providence, RI, 1997.

\bibitem{ta}
Toshiyuki Akita.
\newblock Homological infiniteness of {T}orelli groups.
\newblock {\em Topology}, 40(2):213--221, 2001.

\bibitem{bbm}
Mladen Bestvina, Kai-Uwe Bux, and Dan Margalit.
\newblock {Dimension of the Torelli group for $\Out(F_n)$}.
\newblock {to appear in Inventiones Mathematicae}.

\bibitem{multik}
Mladen Bestvina, Kai-Uwe Bux, and Dan Margalit.
\newblock {Multitwists in the Johnson kernel}.
\newblock {In preparation}.

\bibitem{rb}
Robert Bieri.
\newblock Groupes \`a dualit\'e de {P}oincar\'e.
\newblock {\em C. R. Acad. Sci. Paris S\'er. A-B}, 273:A6--A8, 1971.

\bibitem{jsb}
Joan~S. Birman.
\newblock {\em Braids, links, and mapping class groups}.
\newblock Princeton University Press, Princeton, N.J., 1974.
\newblock Annals of Mathematics Studies, No. 82.

\bibitem{blm}
Joan~S. Birman, Alex Lubotzky, and John McCarthy.
\newblock Abelian and solvable subgroups of the mapping class groups.
\newblock {\em Duke Math. J.}, 50(4):1107--1120, 1983.

\bibitem{bf}
Daniel Biss and Benson Farb.
\newblock {$\K\sb g$} is not finitely generated.
\newblock {\em Invent. Math.}, 163(1):213--226, 2006.

\bibitem{bs}
A.~Borel and J.-P. Serre.
\newblock Corners and arithmetic groups.
\newblock {\em Comment. Math. Helv.}, 48:436--491, 1973.
\newblock Avec un appendice: Arrondissement des vari\'et\'es \`a coins, par A.
  Douady et L. H\'erault.

\bibitem{ksb}
Kenneth~S. Brown.
\newblock {\em Cohomology of groups}, volume~87 of {\em Graduate Texts in
  Mathematics}.
\newblock Springer-Verlag, New York, 1982.

\bibitem{pb}
Peter Buser.
\newblock {\em Geometry and spectra of compact {R}iemann surfaces}, volume 106
  of {\em Progress in Mathematics}.
\newblock Birkh\"auser Boston Inc., Boston, MA, 1992.

\bibitem{md}
Max Dehn.
\newblock {\em Papers on group theory and topology}.
\newblock Springer-Verlag, New York, 1987.
\newblock Translated from the German and with introductions and an appendix by
  John Stillwell, With an appendix by Otto Schreier.

\bibitem{eg}
Samuel Eilenberg and Tudor Ganea.
\newblock On the {L}usternik-{S}chnirelmann category of abstract groups.
\newblock {\em Ann. of Math. (2)}, 65:517--518, 1957.

\bibitem{farb}
Benson Farb.
\newblock Some problems on mapping class groups and moduli space.
\newblock In {\em Problems on mapping class groups and related topics},
  volume~74 of {\em Proc. Sympos. Pure Math.}, pages 11--55. Amer. Math. Soc.,
  Providence, RI, 2006.

\bibitem{ffg}
A.~T. Fomenko, D.~B. Fuchs, and V.~L. Gutenmacher.
\newblock {\em Homotopic topology}.
\newblock Akad\'emiai Kiad\'o (Publishing House of the Hungarian Academy of
  Sciences), Budapest, 1986.
\newblock Translated from the Russian by K. M\'alyusz.

\bibitem{jh}
John~L. Harer.
\newblock Stability of the homology of the mapping class groups of orientable
  surfaces.
\newblock {\em Ann. of Math. (2)}, 121(2):215--249, 1985.

\bibitem{jlh}
John~L. Harer.
\newblock The virtual cohomological dimension of the mapping class group of an
  orientable surface.
\newblock {\em Invent. Math.}, 84(1):157--176, 1986.

\bibitem{ah2}
Allen Hatcher.
\newblock {Spectral sequences}.
\newblock {Preliminary version, available at
  http://www.math.cornell.edu/$\sim$hatcher/SSAT/SSATpage.html}.

\bibitem{ah3}
Allen Hatcher.
\newblock On triangulations of surfaces.
\newblock {\em Topology Appl.}, 40(2):189--194, 1991.

\bibitem{it}
Y.~Imayoshi and M.~Taniguchi.
\newblock {\em An introduction to {T}eichm\"uller spaces}.
\newblock Springer-Verlag, Tokyo, 1992.
\newblock Translated and revised from the Japanese by the authors.

\bibitem{nvi3}
N.~V. Ivanov.
\newblock Complexes of curves and {T}eichm\"uller spaces.
\newblock {\em Mat. Zametki}, 49(5):54--61, 158, 1991.

\bibitem{nvi}
Nikolai~V. Ivanov.
\newblock Mapping class groups.
\newblock In {\em Handbook of geometric topology}, pages 523--633.
  North-Holland, Amsterdam, 2002.

\bibitem{djabelian}
Dennis Johnson.
\newblock An abelian quotient of the mapping class group {${\mathcal
  I}\sb{g}$}.
\newblock {\em Math. Ann.}, 249(3):225--242, 1980.

\bibitem{dj}
Dennis Johnson.
\newblock Conjugacy relations in subgroups of the mapping class group and a
  group-theoretic description of the {R}ochlin invariant.
\newblock {\em Math. Ann.}, 249(3):243--263, 1980.

\bibitem{dj1}
Dennis Johnson.
\newblock The structure of the {T}orelli group. {I}. {A} finite set of
  generators for {${\mathcal I}$}.
\newblock {\em Ann. of Math. (2)}, 118(3):423--442, 1983.

\bibitem{dj2}
Dennis Johnson.
\newblock The structure of the {T}orelli group. {II}. {A} characterization of
  the group generated by twists on bounding curves.
\newblock {\em Topology}, 24(2):113--126, 1985.

\bibitem{jw}
F.~E.~A. Johnson and C.~T.~C. Wall.
\newblock On groups satisfying {P}oincar\'e duality.
\newblock {\em Ann. of Math. (2)}, 96:592--598, 1972.

\bibitem{mcm}
Darryl McCullough and Andy Miller.
\newblock The genus {$2$} {T}orelli group is not finitely generated.
\newblock {\em Topology Appl.}, 22(1):43--49, 1986.

\bibitem{mr}
Greg McShane and Igor Rivin.
\newblock A norm on homology of surfaces and counting simple geodesics.
\newblock {\em Internat. Math. Res. Notices}, (2):61--69 (electronic), 1995.

\bibitem{mess}
Geoffrey Mess.
\newblock Unit tangent bundle subgroups of the mapping class groups.
\newblock {\em Preprint IHES/M/90/30}, 1990.

\bibitem{gm}
Geoffrey Mess.
\newblock The {T}orelli groups for genus {$2$} and {$3$} surfaces.
\newblock {\em Topology}, 31(4):775--790, 1992.

\bibitem{pr}
Luis Paris and Dale Rolfsen.
\newblock Geometric subgroups of mapping class groups.
\newblock {\em J. Reine Angew. Math.}, 521:47--83, 2000.

\bibitem{jp}
Jerome Powell.
\newblock Two theorems on the mapping class group of a surface.
\newblock {\em Proc. Amer. Math. Soc.}, 68(3):347--350, 1978.

\bibitem{ap}
Andrew Putman.
\newblock {Cutting and pasting in the Torelli groups}.
\newblock {\em Geom. Topol.}, 11:829--865 (electronic), 2007.

\bibitem{jps}
Jean-Pierre Serre.
\newblock Cohomologie des groupes discrets.
\newblock In {\em Prospects in mathematics (Proc. Sympos., Princeton Univ.,
  Princeton, N.J., 1970)}, pages 77--169. Ann. of Math. Studies, No. 70.
  Princeton Univ. Press, Princeton, N.J., 1971.

\bibitem{trees}
Jean-Pierre Serre.
\newblock {\em Trees}.
\newblock Springer Monographs in Mathematics. Springer-Verlag, Berlin, 2003.
\newblock Translated from the French original by John Stillwell, Corrected 2nd
  printing of the 1980 English translation.

\bibitem{jrs}
John~R. Stallings.
\newblock On torsion-free groups with infinitely many ends.
\newblock {\em Ann. of Math. (2)}, 88:312--334, 1968.

\bibitem{rgs}
Richard~G. Swan.
\newblock Groups of cohomological dimension one.
\newblock {\em J. Algebra}, 12:585--610, 1969.

\bibitem{wpt}
William~P. Thurston.
\newblock A norm for the homology of {$3$}-manifolds.
\newblock {\em Mem. Amer. Math. Soc.}, 59(339):i--vi and 99--130, 1986.

\bibitem{wrv}
William~R. Vautaw.
\newblock Abelian subgroups of the {T}orelli group.
\newblock {\em Algebr. Geom. Topol.}, 2:157--170 (electronic), 2002.

\end{thebibliography}

\end{document}